\tikzset{
modal/.style={>=stealth,shorten >=1pt,shorten <=1pt,auto,node distance=1.5cm,semithick},
world/.style={circle,draw,minimum size=0.5cm,fill=gray!15},
point/.style={circle,draw,inner sep=0.5mm,fill=black},
reflexive above/.style={->,loop,looseness=7,in=120,out=60},
reflexive below/.style={->,loop,looseness=7,in=240,out=300},
reflexive left/.style={->,loop,looseness=7,in=150,out=210},
reflexive right/.style={->,loop,looseness=7,in=30,out=330}}
\newtheorem{thm}{Theorem}[section]
\newtheorem{prop}[thm]{Proposition}
\newtheorem{cor}[thm]{Corollary}
\newtheorem{lem}[thm]{Lemma}
\newtheorem{dfn}[thm]{Definition}
\newcommand{\M}{\mbox{$\mathfrak{M}$}}
\newcommand{\fl}{\mbox{$\mathfrak{F}$}}
\newcommand{\PP}{\mbox{$\mathcal{P}$}}
\newcommand{\AND}{\wedge}
\newcommand{\OR}{\vee}
\newcommand{\IMP}{\rightarrow}
\newcommand{\IFF}{\leftrightarrow}
\newcommand{\logicname}[1]{{\sf#1}\xspace}
\newcommand{\f}{\logicname{F}}
\newcommand{\sfour}{\logicname{S4}}
\newcommand{\sj}{\logicname{SJ}}
\newcommand{\bpc}{\logicname{BPC}}
\newcommand{\ipl}{\logicname{IPL}}
\newcommand{\klogic}{\logicname{K}}
\newcommand{\wf}{\logicname{WF}}
\newcommand{\gwf}{\logicname{GWF}}
\newcommand{\gwfc}{\logicname{GWFC}}
\newcommand{\gwfi}{\logicname{GWFI}}
\newcommand{\propset}{\mathsf{Prop}}
\begin{document}

\title{Sequent Calculi for Some Subintuitionistic Logics }

\author{\textbf{Fatemeh Shirmohammadzadeh Maleki}\\
Department of Logic, Iranian Institute of Philosophy\\
 Arakelian 4, Vali-e-Asr, Tehran, Iran,~f.shmaleki2012@yahoo.com}

\maketitle

\begin{abstract}

This paper investigates sequent calculi for certain weak subintuitionistic logics. We establish that weakening and contraction are height-preserving admissible for each of these calculi, and we provide a syntactic proof for the admissibility of the cut rule for most of these sequent calculi. We also demonstrate the equivalence of these calculi to their corresponding axiomatic systems, thereby confirming their soundness and completeness with respect to neighbourhood semantics.

\end{abstract}

\textbf{Keywords:} Subintuitionistic logics, Sequent calculi, Hilbert-style.

\section{Introduction}

Subintuitionistic logics as a theme were first studied by Corsi~\cite{a4}, who introduced a basic system \f in a Hilbert-style proof system. The system \f
has Kripke frames in which the assumption of preservation of truth is dropped and which are not assumed to be reflexive or transitive. Corsi also introduced G\"odel-type translations of these systems into modal logic and showed that \f can be translated into the modal logic \klogic just as  \ipl into \sfour. 
Restall~\cite{a2} defined a similar system \sj (see also~\cite{Dic}). A much studied extension of \f was introduced by Visser ~\cite{a3}. He introduced Basic logic \bpc in natural deduction form and proved completeness of \bpc for finite irreflexive Kripke models. Many results on this logic were obtained by Ardeshir and Ruitenburg (see e.g.~\cite{ Ar1, Ar}). 

Building on this line of research, de Jongh and Shirmohammadzadeh Maleki introduced weaker subintuitionistic logics based on neighborhood semantics in~\cite{Dic3, Dic4, FD}, departing from the traditional Kripkean frames. Specifically, they focus on the system \wf, which is significantly weaker than {\sf F}, along with its extensions. These systems have modal companions, establishing a correspondence between subintuitionistic and modal logics~\cite{Dic4, FD6, Dic5}. Sequent systems for the modal companions of these weak subintuitionistic logics have been provided in~\cite{Ra, Eu}.

Kikuchi \cite{kik} introduced a dual-context style sequent calculus for \f. Ishigaki and Kashima \cite{ish} introduced a sequent calculus for \f and proved the cut-elimination theorem semantically. Yamasaki and Sano \cite{yam} provided a labelled sequent calculi for \f and showed that their calculi admit the cut rule. Aboolian and Alizadeh \cite{ab} introduced a cut-free G3-style sequent calculus for \f in order to prove Lyndon's and Craig's interpolation properties for this logic.
Sasaki \cite{sa}, Aghaei and Ardeshir \cite{ag} introduced two types of sequent calculus for \bpc and proved Craig's interpolation property sintactically for this logic \cite{ag}. Tesi \cite{Tesi}
introduced nested calculi for subintuitionistic logics corresponding
(in the sense of the modal interpretation) to the modal logics {\sf K}, {\sf T}, {\sf K4} and for Visser’s logic \bpc.

In this paper, we introduce sequent systems for several weak subintuitionistic logics, including \wf, \f and logics lying between \wf and \f. We explore the admissibility of structural rules such as weakening, contraction, and cut, and provide proofs of the equivalence between Hilbert-style systems and their corresponding sequent systems. Additionally, a new cut-free system {\sf GF} is presented, shown to be equivalent to \f.

The paper is structured as follows. Section \ref{sec:modneigh} summarizes the basic notions of axiomatic systems and neighbourhood semantics for some subintuitionistic logics. Section \ref{modneigh1} presents the sequent calculus system for the basic subintuitionistic logic \wf, and shows that weakening and contraction are height-preserving admissible, and that cut is syntactically admissible. In Section \ref{some}, we present sequent calculus systems for some other subintuitionistic logics between \wf and \f, and then prove the admissibility of structural rules for some of them. Moreover, we establish the equivalence between the Hilbert-style proof systems and the corresponding sequent calculus systems. Finally, in Section \ref{fcdi}, a new cut-free sequent calculus {\sf GF} is introduced and we showed that {\sf GF} and \f are equivalent.

\section{Subintuitionistic logics}\label{sec:modneigh}

\subsection{Syntax}
In this subsection we recall the Hilbert-style system for the basic subintuitionistic logic {\sf WF} and some extensions of it. The results here have been proved before in \cite{FD, Dic4}. 

The language of the subintuitionistic logics
is generated from a countable set  $\propset$ of atomic propositions, which we denote by lowercase letters $p, q, \dots$, through the   
connectives $ \vee, \wedge, \rightarrow $ and the propositional constant $ \bot $. 
We denote formulas of the language with uppercase latin letters $A, B, C, \ldots$. 
We define the symbol $ \leftrightarrow  $ by $ A\leftrightarrow B \equiv (A\rightarrow B)\wedge (B\rightarrow A) $ as abbreviation. 

\begin{figure}[t]
	\begin{center}
		
	\begin{tabular}{c l @{\hspace{1cm}} cl }
		1. & $ A \IMP (A \OR B )$ & 8. & $A \IMP A$ \\
		2. & $ B \IMP (A \OR B )$ & 9. &  $\vliinf{}{}{A\IMP C}{A \IMP B}{B \IMP C}$\\
		3. & $ (A \AND B) \IMP A$ & 10. & $\vliinf{}{}{A \IMP (B \AND C)}{A \IMP B }{A \IMP C}$\\
		4. & $ (A \AND B) \IMP B$ & 11.  & $\vliinf{}{}{(A \OR B ) \IMP C}{A \IMP C}{B \IMP C}$\\
		5. & $\vliinf{}{}{B}{A}{A \IMP B}$ &12. & $\vliinf{}{}{ A \AND B}{A}{B}$\\
		6. & $\vlinf{}{}{B \IMP A}{A} $& 13. & $\vliinf{}{}{(A \IMP C) \IFF (B \IMP D) }{A \IFF B}{C \IFF D}$\\
		7. & $ A \AND(B \OR C) \IMP (A \AND B ) \OR (A \AND C) $ &14. & $\bot \IMP A$  \\
	\end{tabular}
\end{center}
\caption{The Hilbert-style system for $\wf$ }
\label{fig:axioms:wf}
\end{figure}

 \begin{dfn}\label{def:hilbert:wf} 
The Hilbert-style axiomatization of the basic subintuitionistic logic \wf consists of the axioms and inference rules reported in Figure~\ref{fig:axioms:wf}. 
\end{dfn}

In Figure~\ref{fig:axioms:wf}, the rules are to be applied in such a way that, if the formulas above the line are theorems of \wf, then the formula below the line is a theorem as well and we will call rules 5, 6 and 12 the  modus ponens (MP), a fortiori (AF) and conjunction rules, respectively.
The basic notion $ \vdash_{\wf} A$ means that $ A $ can be drived from the axioms of the \wf by means of its rules. But, when one axiomatizes local validity, not all rules in a Hilbert type system have the same status when one considers deductions from assumptions.
In modal logic the rule of necessitation can only be applied to prove theorems, not to derive conclusions from assumptions.
Almost all the rules of the \wf system are subject to similar limitations.
In the case of deduction from assumptions, we restrict all the rules except the conjunction rule, the restriction on modus ponens is slightly weaker than on the other rules; when concluding $ B $ from $ A $, $ A\rightarrow B $ only the implication $ A\rightarrow B $ need be a theorem. These considerations result in the following definition. 

In this section we will denote that a formula $ A $ is derivable from $ \Gamma $ in the Hilbert style system of {\sf WF} as $ \Gamma\,{\vdash_{{\sf WF}}} A $.

 \begin{dfn}\label{hh}
We define $ \Gamma\,{\vdash_{\wf}} A $ iff there is a  derivation of A from $ \Gamma $ using the rules 6, 9, 10 , 11 and 13 of Figure~\ref{fig:axioms:wf}, only when there are  no assumptions, and the rule 5, MP, only when the derivation of $ A\rightarrow B $ contains no assumptions.
\end{dfn}  
For example if we assume that $ \Gamma= \wf \cup \lbrace p\rbrace $,  then $ \Gamma \vdash_{\wf} p $ and $ \Gamma\nvdash_{\wf} q\rightarrow p$.

By executing the definition of $ \Gamma\,{\vdash_\wf} A $, a weak form of the deduction theorem with a single assumption is obtained:

\begin{thm}\label{z3}
{\rm (Weak Deduction theorem,~\cite{FD} Theorem 2.19)}
\begin{enumerate}
\item[$ \bullet $] $A\vdash_{{\sf WF}} B  ~$ iff $ ~\vdash_{{\sf WF}} A\rightarrow B $.
\item[$ \bullet $] $A_1,\dots,A_n\vdash_{{\sf WF}} B  ~$ iff $~ \vdash_{{\sf WF}} A_1\wedge\dots\wedge A_n\rightarrow B $.
\end{enumerate}
\end{thm}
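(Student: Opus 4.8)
The plan is to prove the Weak Deduction theorem by establishing the two directions of each biconditional, and to observe that the second bullet follows from the first together with an iterated application of the conjunction rule (rule 12), so the real work is entirely in the first bullet.

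For the direction $\vdash_{\wf} A \IMP B$ implies $A \vdash_{\wf} B$: this is the easy half. Given a no-assumptions derivation of $A \IMP B$, I would append to it the assumption $A$ (available since $A \in \Gamma$) and apply modus ponens (rule 5, MP); this is legitimate under Definition~\ref{hh} precisely because the restriction on MP only requires that the derivation of the implication $A \IMP B$ contain no assumptions, which is the case here. This yields $A \vdash_{\wf} B$.

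For the converse, $A \vdash_{\wf} B$ implies $\vdash_{\wf} A \IMP B$: this is the heart of the argument and proceeds by induction on the length of the derivation of $B$ from $A$. The base cases are: (i) $B$ is $A$ itself, where we need $\vdash_{\wf} A \IMP A$, which is axiom 8; (ii) $B$ is an axiom or $\wf$-theorem, where from $\vdash_{\wf} B$ we get $\vdash_{\wf} B \IMP (A \IMP B)$ by rule 6 (a fortiori) and then $\vdash_{\wf} A \IMP B$ by MP. For the inductive step one goes through each rule permitted in a deduction from assumptions. The conjunction rule (12) is handled by axiom 10 together with rule 10's premises supplied by the induction hypothesis; rules 9, 10, 11, 13 applied within the derivation are, by Definition~\ref{hh}, only used when their premises have no assumptions, so those premises are already theorems and the conclusion is a theorem, reducing to case (ii); similarly rule 6 inside the derivation is only applied with no assumptions. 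The one genuinely delicate case is MP used inside the derivation: here we have $C$ and $C \IMP D$ above the line with $C \IMP D$ derived without assumptions (so $\vdash_{\wf} C \IMP D$), while $C$ is derived from $A$, so the induction hypothesis gives $\vdash_{\wf} A \IMP C$; then rule 9 (transitivity of implication) applied to $\vdash_{\wf} A \IMP C$ and $\vdash_{\wf} C \IMP D$ yields $\vdash_{\wf} A \IMP D$, as required.

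The main obstacle, and the reason the restricted deduction relation of Definition~\ref{hh} is set up the way it is, is exactly the MP case: without the restriction that $C \IMP D$ be assumption-free we could not invoke rule 9 (which itself may not be applied in the presence of assumptions), and the induction would break. I would make sure to state explicitly that every other rule either cannot introduce a dependence on $A$ (because Definition~\ref{hh} forbids its use with assumptions) or is the conjunction rule (handled via axiom 10), so that the induction goes through cleanly. Since this is Theorem 2.19 of~\cite{FD}, I would either reproduce this argument or simply cite it; for completeness of the present paper I would include the short induction above.
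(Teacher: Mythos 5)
The paper does not prove this statement at all: it is imported verbatim from~\cite{FD} (Theorem 2.19), so there is no in-paper proof to compare against. Your argument is the standard induction one would expect and is essentially sound: the easy direction via the relaxed MP restriction, and the converse by induction on the derivation, observing that rules 6, 9, 10, 11, 13 of Figure~\ref{fig:axioms:wf} can only fire on assumption-free premises (collapsing those cases to the theorem case), that the conjunction rule is absorbed by rule 10, and that the restricted MP case closes via rule 9. Two small points to tidy up. First, your base case (ii) misapplies rule 6: with premise $B$, rule 6 yields $X\rightarrow B$ for arbitrary $X$, so from $\vdash_{\wf} B$ you get $\vdash_{\wf} A\rightarrow B$ in one step; your detour through $B\rightarrow(A\rightarrow B)$ is not an instance of rule 6 (that would require the premise $A\rightarrow B$, which is what you are trying to prove). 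Second, for the $n$-assumption bullet, the right-to-left direction does follow by conjoining the $A_i$ with rule 12 and applying MP, but the left-to-right direction needs slightly more than ``iterated conjunction'': either rerun the induction with base case $\vdash_{\wf} A_1\wedge\dots\wedge A_n\rightarrow A_i$ supplied by axioms 3, 4 and rule 9, or first show $A_1,\dots,A_n\vdash_{\wf} B$ implies $\bigwedge A_i\vdash_{\wf} B$ using those projection axioms and MP. Both are routine, but as written that step is asserted rather than proved.
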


As usual, we can extend \wf by adding various rules and axiom schemes.
In this paper, we will consider the following axioms and rules:
$$ \frac{A\rightarrow B\vee C~~~~~C\rightarrow A\vee D~~~~~A\wedge  D\rightarrow B~~~~~ C\wedge B\rightarrow D}{(A\rightarrow B)\leftrightarrow (C\rightarrow D)}~~~~{\sf N}$$
$$~~~~~ \frac{C\rightarrow A\vee D~~~~~~~ C\wedge B\rightarrow D}{(A\rightarrow B)\rightarrow (C\rightarrow D)}~~~~~~~~~~{\sf N_{2}}$$
$$(A\rightarrow B)\wedge (B\rightarrow C)\rightarrow(A\rightarrow  C)~~~~~~~~~~{\sf I}$$
$$(A\rightarrow B)\wedge (A\rightarrow C)\rightarrow(A\rightarrow B\wedge C)  ~~~~{\sf C}$$
$$(A\rightarrow C)\wedge (B\rightarrow C)\rightarrow(A\vee B \rightarrow C)  ~~~~{\sf D}$$
$$(A\rightarrow B\wedge C)\rightarrow (A\rightarrow B)\wedge (A\rightarrow C) ~~~~{\sf \widehat{C}} $$
$$ (A\vee B \rightarrow C)\rightarrow (A\rightarrow C)\wedge (B\rightarrow C) ~~~~{\sf \widehat{D}}$$

If $ \Gamma\subseteq\lbrace {\sf I}, {\sf C},{\sf D}, {\sf N},{\sf \widehat{C}}, {\sf \widehat{D}},   {\sf N_{2}}\rbrace $, we will write ${\sf WF\Gamma }$ for the logic obtaines from {\sf WF} by adding to  {\sf WF}  the schemas and rules in $ \Gamma $ as new axioms and rules.
Instead of ${\sf WFN}$  and ${\sf WFN_{2}}$ we will write ${\sf WF_N}$  and ${\sf WF_{N_{2}}}$, respectively.

The logic \f is the smallest set of formulas closed under instances of \wf, {\sf C}, {\sf D} and {\sf I} \cite{FD}. Moreover, it is shown that the logic $ {\sf WF_{N_{2}}C} $ equals the logic \f \cite{Dic4}. 

In figures \ref{fig:wfc}, \ref{fig:wfd} and \ref{fig:wfi} the reader finds the lattice of subintuitionistic logics between \wf and \f. 
The relations between the logics in distinct ones of the three cubes is mostly yet unclear and within the same cube all the relations are strict.

In the subsequent sections,  we will present the first sequent calculus for the systems \wf,  ${\sf WF_N}$, ${\sf WF_{N_{2}}}$, ${\sf WFI}$, ${\sf WFC}$, ${\sf WFD}$, ${\sf WF}{\sf \widehat{C}}$, ${\sf WF}{\sf \widehat{D}}$, ${\sf WFCI}$, ${\sf WFDI}$ and alternative sequent calculus system for \f. We will demonstrate the admissibility of cut for the sequent calculus systems of \wf, ${\sf WF_N}$, ${\sf WF_{N_{2}}}$, ${\sf WFI}$, ${\sf WFCI}$, ${\sf WFDI}$ and \f. However, we have not yet been able to establish cut admissibility for the sequent calculus systems of ${\sf WFC}$, ${\sf WFD}$, ${\sf WF}{\sf \widehat{C}}$ and ${\sf WF}{\sf \widehat{D}}$.

\begin{figure}[t]
\begin{center}
$${\begin{tikzpicture}[line cap=round,line join=round,>=triangle 45,x=1.25cm,y=1.10cm]
\draw (-6.72,3.46) node[anchor=north west] {${\sf  WF\widehat{\sf C}C}$};
\draw (-3.72,4) node[anchor=north west] {${\sf WF_{N_{2}}C}$};
\draw (-6.54,1.4) node[anchor=north west] {${\sf  WF  \widehat{\sf C}}$};
\draw (-4.34,0.5) node[anchor=north west] {{\sf WF}};
\draw (-4.44,2.58) node[anchor=north west] {${\sf  WF C}$};
\draw (-1.42,1.46) node[anchor=north west] {${\sf  WF_{N}}$};
\draw (-1.52,3.4) node[anchor=north west] {${\sf  WF_{N}C}$};
\draw (-3.5,2.1) node[anchor=north west] {${\sf WF_{N_{2}}}$};
\draw [line width=1pt] (-6,2.84)-- (-5.96,1.32);
\draw [line width=1pt] (-5.74,0.86)-- (-4.3,0.26);
\draw [line width=1pt] (-4,2)-- (-3.96,0.5);
\draw [line width=1pt] (-5.56,2.8)-- (-4.4,2.36);
\draw [line width=1pt] (-3.56,2.3)-- (-1.42,2.8);
\draw [line width=1pt] (-5.5,3.14)-- (-3.8,3.5);
\draw [line width=1pt] (-2.56,3.36)-- (-1.64,3.12);
\draw [line width=1pt] (-1.04,2.66)-- (-1.02,1.48);
\draw [line width=1pt] (-3.58,0.28)-- (-1.36,0.92);
\draw [line width=1pt,dash pattern=on 2pt off 3pt] (-5.56,1.18)-- (-3.48,1.76);
\draw [line width=1pt,dash pattern=on 2pt off 3pt] (-2.56,1.62)-- (-1.56,1.26);
\draw [line width=1pt,dash pattern=on 2pt off 3pt] (-3.02,3.42)-- (-3,2);
\end{tikzpicture}}$$
\end{center}
\caption{Lattice of some subintuitionistic logics  }
\label{fig:wfc}
\end{figure}

\begin{figure}[t]
\begin{center}
$${\begin{tikzpicture}[line cap=round,line join=round,>=triangle 45,x=1.25cm,y=1.10cm]
\draw (-6.72,3.46) node[anchor=north west] {${\sf  WF\widehat{\sf D}D}$};
\draw (-3.72,4) node[anchor=north west] {${\sf WF_{N_{2}}D}$};
\draw (-6.54,1.4) node[anchor=north west] {${\sf  WF  \widehat{\sf D}}$};
\draw (-4.34,0.5) node[anchor=north west] {{\sf WF}};
\draw (-4.44,2.58) node[anchor=north west] {${\sf  WF D}$};
\draw (-1.42,1.46) node[anchor=north west] {${\sf  WF_{N}}$};
\draw (-1.52,3.4) node[anchor=north west] {${\sf  WF_{N}D}$};
\draw (-3.5,2.1) node[anchor=north west] {${\sf WF_{N_{2}}}$};
\draw [line width=1pt] (-6,2.84)-- (-5.96,1.32);
\draw [line width=1pt] (-5.74,0.86)-- (-4.3,0.26);
\draw [line width=1pt] (-4,2)-- (-3.96,0.5);
\draw [line width=1pt] (-5.56,2.8)-- (-4.4,2.36);
\draw [line width=1pt] (-3.56,2.3)-- (-1.42,2.8);
\draw [line width=1pt] (-5.5,3.14)-- (-3.8,3.5);
\draw [line width=1pt] (-2.56,3.36)-- (-1.64,3.12);
\draw [line width=1pt] (-1.04,2.66)-- (-1.02,1.48);
\draw [line width=1pt] (-3.58,0.28)-- (-1.36,0.92);
\draw [line width=1pt,dash pattern=on 2pt off 3pt] (-5.56,1.18)-- (-3.48,1.76);
\draw [line width=1pt,dash pattern=on 2pt off 3pt] (-2.56,1.62)-- (-1.56,1.26);
\draw [line width=1pt,dash pattern=on 2pt off 3pt] (-3.02,3.42)-- (-3,2);
\end{tikzpicture}}$$
\end{center}
\caption{Lattice of some subintuitionistic logics }
\label{fig:wfd}
\end{figure}

\begin{figure}[t]
\begin{center}
$$\begin{tikzpicture}[line cap=round,line join=round,>=triangle 45,x=1.25cm,y=1.10cm]
\draw (-6.72,3.46) node[anchor=north west] {${\sf  WFCI }$};
\draw (-3.72,4) node[anchor=north west] {${\sf WFCDI=F}$};
\draw (-6.54,1.4) node[anchor=north west] {${\sf  WFI}$};
\draw (-4.34,0.5) node[anchor=north west] {{\sf WF}};
\draw (-4.44,2.58) node[anchor=north west] {${\sf  WFC}$};
\draw (-1.42,1.46) node[anchor=north west] {${\sf  WFD}$};
\draw (-1.52,3.4) node[anchor=north west] {${\sf  WFCD}$};
\draw (-3.5,2.1) node[anchor=north west] {${\sf WFDI}$};
\draw [line width=1pt] (-6,2.84)-- (-5.96,1.32);
\draw [line width=1pt] (-5.74,0.86)-- (-4.3,0.26);
\draw [line width=1pt] (-4,2)-- (-3.96,0.5);
\draw [line width=1pt] (-5.56,2.8)-- (-4.4,2.36);
\draw [line width=1pt] (-3.56,2.3)-- (-1.42,2.8);
\draw [line width=1pt] (-5.5,3.14)-- (-3.8,3.5);
\draw [line width=1pt] (-2.56,3.36)-- (-1.64,3.12);
\draw [line width=1pt] (-1.04,2.66)-- (-1.02,1.48);
\draw [line width=1pt] (-3.58,0.28)-- (-1.36,0.92);
\draw [line width=1pt,dash pattern=on 2pt off 3pt] (-5.56,1.18)-- (-3.48,1.76);
\draw [line width=1pt,dash pattern=on 2pt off 3pt] (-2.56,1.62)-- (-1.56,1.26);
\draw [line width=1pt,dash pattern=on 2pt off 3pt] (-3.02,3.42)-- (-3,2);
\end{tikzpicture}$$
\end{center}
\caption{Lattice of some subintuitionistic logics }
\label{fig:wfi}
\end{figure}

\subsection{Semantics}

In the following we first recall the NB-neighbourhood  frames introduced in \cite{FD}, and further studied in \cite{Dic3, Dic4}.
\begin{dfn}
An  \textbf{NB-neighbourhood Frame}  $  \fl\,{=}\,\langle W, N\!B\rangle $  for subintuitionistic logic consists of a non-empty set $W$, and a function
$N\!B$  from $W$ into $ \mathcal{P}((\mathcal{P}(W))^{2} )$ such that:
$$\forall w \in W , ~ \forall X, Y \in \PP(W) ~(X\subseteq Y~\Rightarrow~(X, Y) \in N\!B(w)).$$
\noindent In an \textbf{NB-neighbourhood Model} $\M= \langle W, N\!B, V\rangle $,  $ V\! : \propset\rightarrow \mathcal{P}(W)$ is a valuation function on the set of propositional variables.
 \end{dfn}  

\begin{dfn}
\label{truth}
Let $ \M=\langle W, N\!B, V \rangle $ be an \emph{NB}-neighbourhood model.
\noindent \textbf{Truth} of a propositional formula in a world $w$ is defined inductively as follows.
\begin{enumerate}
\item $ \M,w \Vdash p~~~~~~ ~~\Leftrightarrow~~ w \in V(p)$;
\item $ \M,w \Vdash A\wedge B~~\Leftrightarrow~~ \M,w \Vdash A ~{\rm and}~ \M,w \Vdash B$;
\item $ \M,w \Vdash A\vee B~~\Leftrightarrow ~~\M,w \Vdash A ~{\rm or}~ \M,w \Vdash B$;
\item $ \M,w \Vdash A\rightarrow B~\Leftrightarrow ~\left(  A^{{\mathfrak M}}, B^{{\mathfrak M}}\right)   \in N\!B(w)$;
\item $ \M,w \nVdash \perp,$
\end{enumerate}
where $A^{{\mathfrak M}}= \lbrace w \in W ~|~\M , w  \Vdash A \rbrace$ is the truth set of $ A $.
\end{dfn}

\begin{thm}{\rm (Completeness theorem,~\cite{FD})}\label{4}
 The logic {\sf WF} is sound and strongly complete with respect to the class of \emph{NB}-neighbourhood frames.
 \end{thm}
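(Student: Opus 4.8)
The plan is to prove soundness by induction on derivations and strong completeness by a canonical-model construction over prime theories. For soundness I would show, by induction on a derivation witnessing $\Gamma\vdash_{\wf}A$, that in every \emph{NB}-model $\langle W,N\!B,V\rangle$ every world forcing all members of $\Gamma$ forces $A$. Each axiom of Figure~\ref{fig:axioms:wf} is checked directly against Definition~\ref{truth}: the reflexivity-type axioms~8 and~14 use the frame condition $X\subseteq Y\Rightarrow(X,Y)\in N\!B(w)$ applied to $(A^{\mathfrak{M}},A^{\mathfrak{M}})$ and to $(\emptyset,A^{\mathfrak{M}})$, while axioms~1--4 and the distributivity axiom~7 come straight from the clauses for $\wedge$ and $\vee$. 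The side-conditions of Definition~\ref{hh} are precisely what make the rules sound: rules~6,~9,~10,~11,~13 are applied only with no open assumptions, so it suffices that they preserve \emph{global} validity (truth at every world of every model), which is routine; the conjunction rule~12 preserves local truth unconditionally; and modus ponens is used only when $A\to B$ is already a theorem, so, carrying along as a strengthened hypothesis that every provable implication $A\to B$ forces $A^{\mathfrak{M}}\subseteq B^{\mathfrak{M}}$ in every model, forcing $A$ at a world yields forcing $B$ there. The only delicate bookkeeping is in this last clause.

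For completeness I would form the canonical model $\mathfrak{M}^{c}=\langle W^{c},N\!B^{c},V^{c}\rangle$, where $W^{c}$ is the set of all consistent, deductively closed (under $\vdash_{\wf}$), prime sets of formulas, $V^{c}(p)=\{w\in W^{c}:p\in w\}$, and, writing $\|A\|=\{w\in W^{c}:A\in w\}$,
$$N\!B^{c}(w)=\{(X,Y)\in(\PP(W^{c}))^{2}:X\subseteq Y\}\ \cup\ \{(\|C\|,\|D\|):C\to D\in w\}.$$
The frame condition is built in, so $\mathfrak{M}^{c}$ is an \emph{NB}-model. The core is the Truth Lemma $\mathfrak{M}^{c},w\Vdash A\iff A\in w$, proved by induction on $A$: atoms by definition of $V^{c}$; $\wedge$ from closure of theories under the conjunction rule and under axioms~3--4 via MP; $\vee$ from primeness together with axioms~1--2 via MP; $\bot$ from consistency. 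For $A\to B$ the forward direction is immediate from the definition of $N\!B^{c}(w)$. Conversely, $(\|A\|,\|B\|)\in N\!B^{c}(w)$ means either $\|A\|\subseteq\|B\|$, or $\|A\|=\|C\|$ and $\|B\|=\|D\|$ for some $C\to D\in w$. In the first case $\|A\|\subseteq\|B\|$ says every prime theory containing $A$ contains $B$, so by the Lindenbaum Lemma $A\vdash_{\wf}B$, hence $\vdash_{\wf}A\to B$ by the Weak Deduction Theorem (Theorem~\ref{z3}), hence $A\to B\in w$; in the second case the same argument gives $\vdash_{\wf}A\leftrightarrow C$ and $\vdash_{\wf}B\leftrightarrow D$, and rule~13 then yields $\vdash_{\wf}(A\to B)\leftrightarrow(C\to D)$, so again $A\to B\in w$.

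The remaining ingredient is the Lindenbaum Lemma: if $\Gamma\nvdash_{\wf}A$ then some prime theory $w$ contains $\Gamma$ and omits $A$. One takes $w$ maximal among deductively closed supersets of $\Gamma$ that do not derive $A$ (a chain union still omits $A$ because $\vdash_{\wf}$ is finitary); $w$ is consistent since $\bot\in w$ would force $A\in w$ through axiom~14; and $w$ is prime, for if $B\vee C\in w$ while $B,C\notin w$ then maximality gives $w,B\vdash_{\wf}A$ and $w,C\vdash_{\wf}A$, and, with a finite conjunction $E$ of members of $w$, the Weak Deduction Theorem, the distributivity axiom~7, rule~11 and rule~9 combine these into $\vdash_{\wf}(E\wedge(B\vee C))\to A$, whence $w\vdash_{\wf}A$, a contradiction. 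Running $\Gamma\nvdash_{\wf}A$ through this lemma and then the Truth Lemma produces a world of $\mathfrak{M}^{c}$ forcing $\Gamma$ but not $A$, which together with soundness proves the theorem. I expect the $\to$-clause of the Truth Lemma to be the real obstacle: its converse direction needs enough prime theories for $\|A\|\subseteq\|B\|$ to entail $A\vdash_{\wf}B$, and it needs the congruence rule~13 to carry $C\to D\in w$ over to $A\to B\in w$ when only $\|A\|=\|C\|$ and $\|B\|=\|D\|$ is known --- this is exactly the step that fails for a language without rule~13, and it is also why $N\!B^{c}(w)$ must be defined through truth sets rather than through formulas.
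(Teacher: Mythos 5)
The paper does not actually prove this theorem; it is imported from~\cite{FD} with a bare citation, so your argument can only be judged on its own terms. Your completeness half is the standard canonical construction for this semantics and is sound: the two-part definition of $N\!B^{c}(w)$, the use of rule~13 to carry $C\to D\in w$ over to $A\to B\in w$ when only the truth sets agree, and the Lindenbaum argument through axiom~7 and rules~9 and~11 are exactly the right ingredients, and your closing observation about why rule~13 is indispensable is correct.

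The genuine gap is in soundness. The strengthened hypothesis you carry --- ``$\vdash_{\wf}A\to B$ implies $A^{\mathfrak{M}}\subseteq B^{\mathfrak{M}}$ in every model'' --- is indeed needed (rule~9 is not sound for bare validity: $(X,Y)\in N\!B(w)$ and $(Y,Z)\in N\!B(w)$ say nothing about $(X,Z)$ on an arbitrary NB-frame), but the induction as you state it does not close under modus ponens. MP can itself output a theorem of implicational shape: from $\vdash(C\to D)\wedge E$ and the axiom-3 instance $\vdash\bigl((C\to D)\wedge E\bigr)\to(C\to D)$ one concludes $\vdash C\to D$. Your hypothesis applied to the major premise yields only $\bigl((C\to D)\wedge E\bigr)^{\mathfrak{M}}\subseteq(C\to D)^{\mathfrak{M}}$, hence $(C\to D)^{\mathfrak{M}}=W$, i.e.\ $(C^{\mathfrak{M}},D^{\mathfrak{M}})\in N\!B(w)$ at every world --- which on an arbitrary NB-frame does \emph{not} entail $C^{\mathfrak{M}}\subseteq D^{\mathfrak{M}}$. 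The missing inclusion really comes from the minor premise: $(C\to D)\wedge E$, being a theorem, must itself have been assembled from a theorem $C\to D$. To capture this the induction hypothesis must be strengthened hereditarily (every theorem is ``hereditarily valid'': true everywhere; a provable conjunction has hereditarily valid conjuncts; a provable disjunction has a hereditarily valid disjunct; a provable implication both denotes an inclusion and sends hereditarily valid antecedents to hereditarily valid consequents). With that predicate all the clauses of Figure~\ref{fig:axioms:wf}, including MP and rule~13, go through; with only the top-level inclusion the argument stalls exactly at the point you label ``delicate bookkeeping.'' Note also that you cannot shortcut this via Theorem~\ref{z3}: reducing $\vdash A\to B$ to $A\vdash_{\wf}B$ and then appealing to soundness of $\vdash_{\wf}$ is circular, since the derivation witnessing $A\vdash_{\wf}B$ contains precisely the MP step whose soundness is in question.
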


In order to give soundness and completeness results for extensions of \wf with respect to (classes of) neighbourhood models, we introduce the following definition.

\begin{dfn}
For every neighbourhood frame  $  \fl\,{=}\,\langle W, N\!B\rangle $, we list some relevant properties as follows (here $ X, Y, Z \in \PP(W) $, $ w \in W $ and we denote the complement of $ X $ by $\overline{X} $):
\begin{enumerate}
\item[$ \bullet $] \fl \, is closed under \textbf{intersection} if and only if for all $ w \in W $, if $ (X, Y)\in N\!B(w) $, $ (X, Z)\in N\!B(w) $ then $ (X,  Y\cap Z) \in N\!B(w) $.
\item[$ \bullet $] \fl \, is closed under \textbf{union} if and only if for all $ w \in W $, if $ (X, Y)\in N\!B(w) $, $ (Z, Y)\in N\!B(w) $ then $ (X\cup Z,  Y) \in N\!B(w) $.
\item[$ \bullet $] \fl \, satisfies \textbf{transitivity} if and only if for all $ w \in W $, if $ (X, Y)\in N\!B(w) $, $ (Y, Z)\in N\!B(w) $ then $ (X,  Z) \in N\!B(w) $.
\item[$ \bullet $] \fl \, is closed under \textbf{upset} if and only if for all $ w \in W $, if $ (X, Y)\in N\!B(w) $ and $ Y\subseteq Z $ then $ (X, Z) \in N\!B(w) $.
\item[$ \bullet $] \fl \, is closed under \textbf{downset} if and only if for all $ w \in W $, if $ (X, Y)\in N\!B(w) $ and $ Z\subseteq X $ then $ (Z, Y) \in N\!B(w) $.
\item[$ \bullet $] \fl \, is closed under \textbf{equivalence} if and only if for all $ w \in W $, if $ (X, Y)\in N\!B(w) $ and $ \overline{X} \cup Y =\overline{X^{'}} \cup Y^{'}$ then $ (X^{'}, Y^{'}) \in N\!B(w) $.
\item[$ \bullet $] \fl \, is closed under \textbf{superset equivalence} if and only if for all $ w \in W $, if $ (X, Y)\in N\!B(w) $ and $ \overline{X} \cup Y \subseteq \overline{X^{'}} \cup Y^{'}$ then $ (X^{'}, Y^{'}) \in N\!B(w) $.
\end{enumerate}
\end{dfn}

To see the proof of the following proposition and theorem, you can refer to \cite{Dic3, Dic4, FD}.

\begin{prop}
We have the following correspondence results between formulas and the
properties of the neighbourhood function defined above:
\begin{enumerate}
\item[$ \bullet $] Axiom {\sf C} corresponds to closure under intersection;
\item[$ \bullet $] Axiom {\sf D} corresponds to closure under union;
\item[$ \bullet $] Axiom {\sf I} corresponds to transitivity;
\item[$ \bullet $] Axiom ${\sf \widehat{C}}$ corresponds to closure under upst;
\item[$ \bullet $] Axiom ${\sf \widehat{D}}$ corresponds to closure under downset;
\item[$ \bullet $] Rule {\sf N} corresponds to closure under equivalence;
\item[$ \bullet $] Rule ${\sf N_{2}}$ corresponds to closure under superset equivalence.
\end{enumerate}
\end{prop}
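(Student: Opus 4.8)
The proposition is a list of seven soundness-and-correspondence claims, and each is proved by the same two-step recipe, so the plan is to fix a representative case in detail and indicate how the rest follow by analogy. Each item asserts that a given axiom/rule is valid on precisely those NB-frames satisfying a given closure condition; since validity of an axiom on a \emph{frame} means truth at every world under every valuation, I would unwind Definition~\ref{truth} (clause 4 in particular, which reads $\M,w\Vdash A\IMP B$ as $(A^\M,B^\M)\in N\!B(w)$) to translate the semantic statement into a purely set-theoretic one about the triple $(A^\M,B^\M,C^\M)$ of truth sets, and then match it against the corresponding closure clause in the preceding definition.

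\textbf{The two directions.} Take axiom {\sf C}, i.e. $(A\IMP B)\AND(A\IMP C)\IMP(A\IMP B\AND C)$, as the model case. For \emph{soundness} ($\Leftarrow$): assume $\fl$ is closed under intersection, let $V$ be any valuation and $w$ any world, and suppose $\M,w\Vdash(A\IMP B)\AND(A\IMP C)$; by clauses 2 and 4 this says $(A^\M,B^\M)\in N\!B(w)$ and $(A^\M,C^\M)\in N\!B(w)$, so closure under intersection gives $(A^\M,B^\M\cap C^\M)\in N\!B(w)$, and since $B^\M\cap C^\M=(B\AND C)^\M$ (immediate from clause 2) this is exactly $\M,w\Vdash A\IMP(B\AND C)$. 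For \emph{correspondence/completeness} ($\Rightarrow$): assume the axiom is valid on $\fl$, fix $w$ and sets $X,Y,Z\in\PP(W)$ with $(X,Y),(X,Z)\in N\!B(w)$, and choose fresh atoms $p,q,r$ with a valuation $V$ making $p^\M=X$, $q^\M=Y$, $r^\M=Z$; instantiating the axiom at $w$ with $A:=p$, $B:=q$, $C:=r$ and using the hypothesis on the antecedent yields $(X,Y\cap Z)\in N\!B(w)$, i.e. closure under intersection. The same template handles {\sf D} (swap the roles so the union is on the left coordinate, using $(A\OR B)^\M=A^\M\cup B^\M$ from clause 3), {\sf I} (chain two pairs through a common middle set), and $\widehat{\sf C}$, $\widehat{\sf D}$ (one-directional versions giving upset/downset closure).

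\textbf{The rules {\sf N} and ${\sf N_2}$.} These two need slightly more care because they are \emph{rules}, not axioms, so ``corresponds'' must be read in the rule sense: the frame condition must be equivalent to the rule preserving frame-validity. For ${\sf N_2}$, from premises $C\IMP A\OR D$ and $C\AND B\IMP D$ valid at $w$ one gets, via clauses 2--4, $(C^\M,A^\M\cup D^\M)\in N\!B(w)$ and $(C^\M\cap B^\M,D^\M)\in N\!B(w)$; the algebraic identity to exploit is $\overline{C}\cup(A\cup D)\supseteq\ldots$ — more precisely one checks that the hypotheses of the premises are exactly the inclusion $\overline{C^\M}\cup D^\M\subseteq\overline{A^\M}\cup B^\M$ appearing in the ``superset equivalence'' clause with $(X',Y'):=(C^\M,D^\M)$, $(X,Y):=(A^\M,B^\M)$, so closure under superset equivalence delivers $(C^\M,D^\M)\in N\!B(w)$, which is $\M,w\Vdash C\IMP D$; for the converse one again plants fresh atoms realizing arbitrary $X,Y,X',Y'$ with $\overline{X}\cup Y\subseteq\overline{X'}\cup Y'$ and reads off the rule instance. {\sf N} is the symmetric ($\leftrightarrow$) strengthening, giving the equivalence $\overline{X}\cup Y=\overline{X'}\cup Y'$ and closure under equivalence.

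\textbf{Main obstacle.} The routine part is genuinely routine; the only place demanding attention is verifying that the four side-conditions on the premises of {\sf N}/${\sf N_2}$ translate \emph{exactly} into the set inclusion/equality in the corresponding closure clause (no gap, no overshoot) — i.e. that $\{\,C\IMP A\OR D,\ C\AND B\IMP D\,\}$ captures precisely $\overline{C^\M}\cup D^\M\subseteq\overline{A^\M}\cup B^\M$ and, for {\sf N}, that adding $A\AND D\IMP B$ and $C\AND B\IMP D$ forces the reverse inclusion as well. This is a small Boolean-algebra computation, but it is the crux, and it is also the point where one must be careful that the ``fresh atoms'' in the completeness direction can in fact realize \emph{any} prescribed quadruple of subsets satisfying the constraint, which is where the arbitrariness of $V$ is used.
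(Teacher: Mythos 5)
First, a point of reference: the paper does not prove this proposition at all --- it is stated with a pointer to the cited sources (\cite{Dic3, Dic4, FD}), so your attempt can only be measured against the standard arguments there. Your overall template (translate everything into truth sets via Definition~\ref{truth}, check a set-theoretic inclusion, and for {\sf N}/${\sf N_2}$ verify the Boolean identity $\overline{C^{\M}}\cup D^{\M}\subseteq\overline{A^{\M}}\cup B^{\M}$ --- which you compute correctly) is the right skeleton, and the soundness halves are essentially salvageable. But even there you skip a step that matters in this semantics: showing ``antecedent true at $w$ implies consequent true at $w$'' only establishes the inclusion of the two truth sets; since the outer $\rightarrow$ of each axiom is itself interpreted through $N\!B$ and not as material implication, you must then invoke the defining condition of NB-frames ($X\subseteq Y\Rightarrow(X,Y)\in N\!B(w)$) to conclude that the axiom is actually true at $w$. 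Relatedly, for the rules {\sf N} and ${\sf N_2}$ the premises being \emph{valid} only says that certain pairs lie in $N\!B(w)$, which does not by itself give the truth-set inclusions your argument feeds into the closure clause; the cited soundness proofs work by establishing inductively the stronger invariant that every \emph{theorem} $\varphi\rightarrow\psi$ satisfies $\varphi^{\M}\subseteq\psi^{\M}$.

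The genuine gap is in your converse (``correspondence/completeness'') direction. The step ``instantiating the axiom at $w$ and using the hypothesis on the antecedent yields $(X,Y\cap Z)\in N\!B(w)$'' is a modus ponens at the world $w$, and modus ponens is not locally valid in this semantics: $(\varphi^{\M},\psi^{\M})\in N\!B(w)$ together with $w\in\varphi^{\M}$ does not entail $w\in\psi^{\M}$ on an arbitrary NB-frame, because the only constraint on $N\!B$ is that it contains all pairs $(X,Y)$ with $X\subseteq Y$ --- nothing forbids it from containing pairs witnessing a ``true'' implication whose consequent fails at $w$. (This is exactly why Definition~\ref{hh} restricts MP to theorem major premises.) So the ``fresh atoms plus detachment'' recipe, which is standard for classical neighbourhood semantics of modal logic, does not transfer here, and as written this half of every item is unproved. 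In the cited sources this direction is not obtained by a direct frame-correspondence argument at all: it is absorbed into the strong completeness proof (Theorem~\ref{4b}), where one shows that the canonical frame of the logic extended by the axiom or rule in question satisfies the corresponding closure property. You would either need to adopt that route, or find a genuinely different argument (e.g.\ a contrapositive construction of a refuting valuation) to justify the detachment you are performing.
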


\begin{thm}\label{4b}
If $ \Gamma\subseteq\lbrace {\sf I}, {\sf C}, {\sf D} , {\sf \widehat{C}}, {\sf \widehat{D}}, {\sf N}, {\sf N_{2}}\rbrace $, then  ${\sf WF\Gamma }$ is sound and strongly complete
with respect to the class of \emph{NB}-neighbourhood frames with all the
properties defined by the axioms and rules in $ \Gamma $.
\end{thm}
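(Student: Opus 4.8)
The plan is to establish soundness and strong completeness separately, leaning on the fact that both are already known for the base system $\wf$ (Theorem~\ref{4}) and on the correspondences recorded in the Proposition just above, which will carry most of the combinatorial load. For soundness I would argue by induction on derivations in the sense of Definition~\ref{hh}: if $\Gamma_0\vdash_{{\sf WF}\Gamma}A$, then $A$ is true at every world of every NB-model whose frame has all the properties associated with the members of $\Gamma$ at which every member of $\Gamma_0$ is true. The $\wf$-axioms and the general handling of the rules come from Theorem~\ref{4}; for the extra principles it suffices to observe that each named frame property makes the corresponding axiom true at every world (closure under intersection validates ${\sf C}$, transitivity validates ${\sf I}$, closure under upset validates ${\widehat{\sf C}}$, and so on) and that closure under equivalence, resp.\ superset equivalence, makes rule ${\sf N}$, resp.\ ${\sf N_2}$, locally truth-preserving; this is exactly the ``axiom/rule $\Rightarrow$ property'' reading of the Proposition. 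The restriction in Definition~\ref{hh} that the non-conjunction rules (and the major premise of MP) may be used only on theorems is precisely what keeps this induction going, since theorems are true at every world of every model.

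For the converse I would run the canonical-model argument. First a Lindenbaum lemma adapted to the weak deduction relation: if $\Gamma_0\nvdash_{{\sf WF}\Gamma}A$, then $\Gamma_0$ extends to a prime ${\sf WF}\Gamma$-theory not containing $A$. Take $W_c$ to be the set of prime ${\sf WF}\Gamma$-theories, put $V_c(p)=|p|:=\{w\in W_c\mid p\in w\}$, and let $N\!B_c(w)$ be generated from the ``intended'' pairs $(|A|,|B|)$ with $A\rightarrow B\in w$, together with all pairs $(X,Y)$ with $X\subseteq Y$ (forced by the NB-frame condition), by closing up under exactly those properties in the list above that correspond to the members of $\Gamma$. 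One then proves the Truth Lemma $\mathfrak{M}_c,w\Vdash A\iff A\in w$ by induction on $A$; the only substantial clause is $A\rightarrow B$, where what is needed is $(|A|,|B|)\in N\!B_c(w)\iff A\rightarrow B\in w$. The direction $(\Leftarrow)$ is immediate from the definition; for $(\Rightarrow)$ I would show, by induction on how a pair enters $N\!B_c(w)$, that every pair of truth sets $(|C|,|D|)$ it contains satisfies $C\rightarrow D\in w$ — the seed cases using the weak deduction theorem (Theorem~\ref{z3}) to treat the $X\subseteq Y$ pairs and rule $13$ of $\wf$ for well-definedness of the representation $(|A|,|B|)$, and each closure step invoking the matching principle (an intersection step arises from $C\rightarrow D_1,C\rightarrow D_2\in w$, so ${\sf C}$ together with closure of $w$ under the conjunction rule yields $C\rightarrow D_1\wedge D_2\in w$; likewise for ${\sf D}$, ${\sf I}$, ${\widehat{\sf C}}$, ${\widehat{\sf D}}$, ${\sf N}$, ${\sf N_2}$). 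Since $N\!B_c$ has been closed up under precisely the $\Gamma$-properties, $\mathfrak{M}_c$ belongs to the required class of frames, and the prime theory supplied by the Lindenbaum lemma refutes $A$ from $\Gamma_0$ in it.

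The main obstacle is the coexistence of the Truth Lemma with the closure requirements. Closing $N\!B_c(w)$ under the $\Gamma$-properties is unavoidable if $\mathfrak{M}_c$ is to live in the right class of frames, but one must check that this closure never introduces a \emph{spurious} pair $(|A|,|B|)$ of definable truth sets with $A\rightarrow B\notin w$; controlling the intermediate, possibly non-definable, sets that appear in these closure steps — or, equivalently, organising the closure so that definability is preserved along the way — is where the real care is required, and it is exactly there that each of the seven axioms and rules is genuinely used. A secondary technical point, already present for $\wf$ and inherited here, is the Lindenbaum lemma: because the deduction relation of Definition~\ref{hh} treats the rules asymmetrically, primeness and consistency of the extending theory must be argued by hand rather than quoted from the intuitionistic setting. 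Once these two points are dealt with, the remainder is routine bookkeeping over the seven principles, and the statement for an arbitrary $\Gamma\subseteq\{{\sf I},{\sf C},{\sf D},{\widehat{\sf C}},{\widehat{\sf D}},{\sf N},{\sf N_2}\}$ follows because the corresponding closure operations may be imposed simultaneously.
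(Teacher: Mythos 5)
First, note that the paper itself does not prove Theorem~\ref{4b}: it explicitly defers to the cited references [Dic3, Dic4, FD], so your proposal can only be measured against the argument those papers must carry out. Your soundness half is essentially right in outline, with one caveat you should make explicit: for modus ponens (and for chaining the new axioms with rule 9) it is not enough that each axiom of $\Gamma$ is \emph{true at every world}; in neighbourhood semantics the validity of $A\rightarrow B$ does not give $A^{\mathfrak M}\subseteq B^{\mathfrak M}$, so the induction has to carry the stronger invariant ``$\vdash A\rightarrow B$ implies $A^{\mathfrak M}\subseteq B^{\mathfrak M}$ in every model of the class'', verified for each new axiom from the corresponding frame property. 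This is a fixable omission.

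The completeness half, however, contains a genuine gap exactly at the point you flag and then defer. Write $\widehat{A}$ for the set of prime theories containing $A$. Your plan is to seed $N\!B_c(w)$ with the pairs $(\widehat{A},\widehat{B})$ for $A\rightarrow B\in w$ together with all $(X,Y)$ with $X\subseteq Y$, and then close under the $\Gamma$-properties. For ${\sf I}$, ${\sf \widehat{C}}$, ${\sf \widehat{D}}$, ${\sf N}$, ${\sf N_2}$ one can indeed check that this closure adds no bad definable pair. But for ${\sf C}$ (and dually ${\sf D}$) it does: take a seed pair $(\widehat{p},\widehat{q})$ with $p\rightarrow q\in w$ and the subset pair $(\widehat{p},Y)$ where $Y=\widehat{D}\cup\overline{\widehat{q}}$ and $D=(p\wedge q)\vee(q\wedge r)$; then $\widehat{p}\subseteq Y$, so closure under intersection forces $(\widehat{p},Y\cap\widehat{q})=(\widehat{p},\widehat{D})$ into $N\!B_c(w)$, and the Truth Lemma would require $p\rightarrow D\in w$. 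From $p\rightarrow q\in w$ one gets $p\rightarrow p\wedge q\in w$ by ${\sf C}$, and $\vdash p\wedge q\rightarrow D$ is a theorem, but combining these needs transitivity of implication applied to a non-theorem premise, which ${\sf WFC}$ does not have (rule 9 is restricted to theorems and axiom ${\sf I}$ is absent). So the closed-up canonical model refutes the Truth Lemma rather than proving it. The missing idea is that the canonical neighbourhood function cannot be obtained by generic closure of one fixed seed; it has to be \emph{defined differently for each choice of} $\Gamma$ (e.g.\ for ${\sf C}$ one must already build the finite-intersection pairs into the definition in a way that keeps every definable pair witnessed by an implication in $w$), and it is precisely this case analysis that constitutes the proof in the cited papers. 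Without it, the argument does not go through for $\Gamma$ containing ${\sf C}$ or ${\sf D}$, which are needed for the headline case ${\sf WFCDI}={\sf F}$.
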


\section{ The system {\sf  GWF}}\label{modneigh1}

In this section, first we present a system of sequent calculus for subintuitionistic logic \wf with the remarkable property that all structural rules, weakening, contraction, and cut are addmissible in it. 
Then we establish the equivalence between the sequent calculus system and the corresponding Hilbert-style system for \wf.
Sequents are of the form $ \Gamma\Rightarrow \Delta $, where $ \Gamma $ and $ \Delta $ are  finite, possibly empty, multisets of formulas. 
Typically, a sequent $ \Gamma \Rightarrow \Delta $ is interpreted as follows: the conjunction of the formulas in $ \Gamma $ implies the disjunction of the formulas in $ \Delta $.

\begin{dfn}
The rules of the calculus \gwf for the basic subintuitionistic logic \wf are the following ($ p $ atomic):

\vspace{0.5cm}

$$ \dfrac{•}{p, \Gamma\Rightarrow \Delta, p}Ax~~~~~~~~~~~~~~~~~~~~~~ \dfrac{•}{\bot, \Gamma\Rightarrow \Delta}\bot_{L}$$

\vspace{0.3cm}
$$\dfrac{A, B, \Gamma\Rightarrow \Delta}{A\wedge B, \Gamma\Rightarrow \Delta}\wedge_{L}~~~~~~~~~~~~~\dfrac{\Gamma\Rightarrow \Delta, A~~~\Gamma\Rightarrow \Delta, B}{ \Gamma\Rightarrow \Delta, A\wedge B}\wedge_{R}$$

\vspace{0.3cm}
$$\dfrac{A, \Gamma\Rightarrow \Delta~~~B, \Gamma\Rightarrow \Delta}{A\vee B, \Gamma\Rightarrow \Delta}\vee_{L}~~~~~~~~~~~~~\dfrac{ \Gamma\Rightarrow \Delta, A, B}{ \Gamma\Rightarrow \Delta, A\vee B}\vee_{R}~~~$$

\vspace{0.3cm}
$$ \dfrac{A\Rightarrow B}{\Gamma\Rightarrow A\rightarrow B, \Delta}\rightarrow_{R}~~~~~~~~~~\frac{A\Rightarrow B~~~B\Rightarrow A~~~C\Rightarrow D~~~D\Rightarrow C}{\Gamma, A\rightarrow C\Rightarrow B\rightarrow D, \Delta}\rightarrow_{LR}$$

\end{dfn}

As measures for inductive proofs we use the weight of a formula and the height of a derivation. So, in the following we define weight of formulas and height of derivations.

\begin{dfn}
The \textbf{weight}  of a formula $ A $ is defined inductively as follows:

$ w(\bot)=0 $, 

$ w(p)=1~~~ $ for atoms $ p $,

$ w(A \circ B)= w(A)+w(B)+1~~~$ for conjunction, disjunction and implication.
\end{dfn}

\begin{dfn}
A derivation in \gwf is either an axiom, an instance of $ \bot_{L} $,
or an application of a logical rule to derivations concluding its premises. The
\textbf{height} of a derivation is the greatest number of successive applications of rules
in it, where an axiom and $ \bot_{L} $ have height 0.
\end{dfn}
A rule of inference is said to be \textbf{(height-preserving) admissible } in \gwf if, whenever its premises are derivable in \gwf, then also its conclusion is derivable (with at most the same derivation height) in \gwf.

Note that in the rules, the multisets $ \Gamma $ and $ \Delta $ are called \textbf{contexts}, the formula in the conclusion is called \textbf{principal}, the formulas in the premises are called \textbf{active}.

\subsection{Admissibility of structural rules}
In this section we will prove the admissibility of structural rules for the calculus  \gwf.

\begin{lem}\label{1}
The sequent $ C, \Gamma\Rightarrow \Delta, C $ is derivable in ${\sf  GWF }$  for an arbitrary formula $ C $ and arbitrary contexts $ \Gamma $ and $\Delta$.
\end{lem}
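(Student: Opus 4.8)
The plan is to prove this by induction on the weight $w(C)$ of the formula $C$. The base cases are $C = p$ atomic, which is exactly the axiom scheme $Ax$, and $C = \bot$, where $\bot, \Gamma \Rightarrow \Delta, \bot$ follows immediately from $\bot_L$. These dispatch the weight-$0$ and weight-$1$ atomic cases.

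For the inductive step I would split on the principal connective of $C$. If $C = A \wedge B$, then by the induction hypothesis (since $w(A), w(B) < w(A \wedge B)$) the sequents $A, B, \Gamma \Rightarrow \Delta, A$ and $A, B, \Gamma \Rightarrow \Delta, B$ are derivable; applying $\wedge_R$ gives $A, B, \Gamma \Rightarrow \Delta, A \wedge B$, and then $\wedge_L$ yields $A \wedge B, \Gamma \Rightarrow \Delta, A \wedge B$. The case $C = A \vee B$ is dual: from $A, \Gamma \Rightarrow \Delta, A, B$ and $B, \Gamma \Rightarrow \Delta, A, B$ (both instances of the induction hypothesis, after a harmless widening of the succedent context, which is itself just the induction hypothesis with a larger $\Delta$) apply $\vee_R$ and then $\vee_L$.

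The implication case $C = A \rightarrow B$ is the one I expect to be the main obstacle, because the two implication rules $\rightarrow_R$ and $\rightarrow_{LR}$ have rigid premises with empty contexts rather than the flexible contexts of the other rules. The natural move is to use $\rightarrow_{LR}$ with principal formula $A \rightarrow B$ appearing both left and right: its premises are $A \Rightarrow B$, $B \Rightarrow A$, $C \Rightarrow D$, $D \Rightarrow C$ instantiated so that the conclusion is $\Gamma, A \rightarrow B \Rightarrow A \rightarrow B, \Delta$; that is, we take the ``$C \rightarrow D$'' of the rule schema to be our $A \rightarrow B$, so the four premises become $A \Rightarrow B$, $B \Rightarrow A$, $A \Rightarrow B$, $B \Rightarrow A$ — i.e. we only need $A \Rightarrow B$ and $B \Rightarrow A$. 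But these are \emph{not} instances of the lemma (they are not of the form $C,\Gamma \Rightarrow \Delta, C$), so a direct appeal to the induction hypothesis fails.

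To get around this I would prove, as a preliminary sublemma (or fold it into the induction), that $A \Rightarrow A$ is derivable for every $A$ — which actually is just the special case $\Gamma = \Delta = \varnothing$ of the statement, so the whole lemma should really be proved with \emph{empty} contexts first and then the general contexts added. Concretely: strengthen the induction to show $A \Rightarrow A$ is derivable for all $A$ by induction on $w(A)$ (atomic and $\bot$ by the axioms; $\wedge,\vee$ as above with empty contexts; and for $A \rightarrow B$, apply $\rightarrow_{LR}$ to the four premises $A \Rightarrow B$? — no). The genuinely delicate point is the implication subcase even for empty contexts: to derive $A \rightarrow B \Rightarrow A \rightarrow B$ via $\rightarrow_{LR}$ we need $A \Rightarrow B$ and $B \Rightarrow A$ as premises, which are not available in general. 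The resolution must be that $\rightarrow_{LR}$ is applied with the \emph{other} matching, taking the schema's ``$A$'' and ``$C$'' both to be our antecedent's components; I would carefully line up the schema $\frac{A' \Rightarrow B'\ \ B' \Rightarrow A'\ \ C' \Rightarrow D'\ \ D' \Rightarrow C'}{\Gamma, A' \rightarrow C' \Rightarrow B' \rightarrow D', \Delta}$ with $A' = C' = A$ and $B' = D' = B$, so that the conclusion is $\Gamma, A \rightarrow A \Rightarrow B \rightarrow B, \Delta$ — still not what we want. Thus the correct instantiation is $A' = B' = A$, $C' = D' = B$: premises $A \Rightarrow A$, $A \Rightarrow A$, $B \Rightarrow B$, $B \Rightarrow B$, all supplied by the induction hypothesis on $w(A), w(B) < w(A \rightarrow B)$, and conclusion $\Gamma, A \rightarrow B \Rightarrow A \rightarrow B, \Delta$. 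That is the step to get exactly right, and it shows the induction does close; I would write this subcase out in full and leave $\wedge, \vee$ as routine.
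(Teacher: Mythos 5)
Your proposal is correct and follows essentially the same route as the paper: induction on the weight of $C$, with the implication case closed by instantiating $\rightarrow_{LR}$ so that all four premises become $A\Rightarrow A$ or $B\Rightarrow B$, each an instance of the induction hypothesis with empty contexts. The instantiation you finally settle on (conclusion $\Gamma, A\rightarrow B\Rightarrow A\rightarrow B,\Delta$ from premises $A\Rightarrow A$, $A\Rightarrow A$, $B\Rightarrow B$, $B\Rightarrow B$) is exactly the paper's derivation, and no separate empty-context sublemma is needed since it is already the $\Gamma=\Delta=\varnothing$ instance of the statement being proved.
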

\begin{proof}
The proof is by induction on weight of $ C $. If $ w(C)\leq 1 $, then 
$ C $
 is either $ \bot $ or $ C=p $ for some atom 
$ p $, or $ C=\bot\rightarrow\bot$. In the first case, $ C, \Gamma\Rightarrow \Delta, C $ is an instance of $\bot_{L}$; in the second, it is an axiom. If $ C=\bot\rightarrow\bot$, then $ C, \Gamma\Rightarrow \Delta, C $, is derived by:
$$\frac{\frac{}{\bot\Rightarrow\bot}\bot_{L}}{\bot\rightarrow\bot, \Gamma \Rightarrow \Delta, \bot\rightarrow\bot}\rightarrow_{R}$$
The inductive hypothesis assumes that $ C, \Gamma\Rightarrow \Delta, C $ is derivable for all formulas $ C $ with $ w(C)\leq n $, and we need to demonstrate that 
$ D, \Gamma\Rightarrow \Delta, D $  is derivable for formulas $ D $ of weight $ \leq n+1 $. There are three cases to consider. We only consider the case where  $ D=A\rightarrow B $. By the definition of weight, $ w(A)\leq n $ and $ w(B)\leq n $, and we have the derivation
$$\frac{A\Rightarrow A~~~~A\Rightarrow A~~~~B\Rightarrow B~~~~B\Rightarrow B}{A\rightarrow B, \Gamma \Rightarrow A\rightarrow B , \Delta }\rightarrow_{LR}$$
Noting that the contexts are arbitrary and here $ A\Rightarrow A $ and $B \Rightarrow B$ are derivable by the inductive hypothesis (IH).

The proof for the cases $ D=A\wedge B $ and  $ D=A\vee B $ also holds by the inductive hypothesis.
\end{proof}

\begin{thm}\label{1b}
\textbf{Height-preserving weakening.} 
The left and right rules of weakening are height-preserving admissible in \gwf.
$$ \frac{\Gamma\Rightarrow \Delta}{D, \Gamma\Rightarrow \Delta}L_{w}~~~~~~~~~~~~~\frac{\Gamma\Rightarrow \Delta}{\Gamma\Rightarrow \Delta, D}R_{w}$$
\end{thm}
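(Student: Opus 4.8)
The plan is to prove the two statements simultaneously by a routine induction on the height $n$ of the given derivation of $\Gamma\Rightarrow\Delta$, showing in each case that $D,\Gamma\Rightarrow\Delta$ (for $L_w$) and $\Gamma\Rightarrow\Delta,D$ (for $R_w$) admit derivations of height at most $n$.

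For the base case $n=0$ the derivation of $\Gamma\Rightarrow\Delta$ is an instance of $Ax$ or of $\bot_L$. Since both schemata are stated with arbitrary multiset contexts on the left and (for $Ax$) on the right, inserting $D$ into $\Gamma$ or into $\Delta$ yields again an instance of the very same axiom, still of height $0$. For the inductive step, let $\Gamma\Rightarrow\Delta$ be concluded by a rule $R$ from premises with derivations of height $<n$. If $R$ is one of $\wedge_L,\wedge_R,\vee_L,\vee_R$, then the side of the sequent to be weakened appears as a context in each premise (in both premises, for the two-premise rules $\wedge_R$ and $\vee_L$); I would apply the induction hypothesis to each premise to insert $D$ into the corresponding context and then re-apply $R$, which reproduces the weakened conclusion with the same final rule application, hence of height $\le n$. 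The remaining rules, $\rightarrow_R$ and $\rightarrow_{LR}$, deserve a separate word: their premises ($A\Rightarrow B$, and for $\rightarrow_{LR}$ also $B\Rightarrow A$, $C\Rightarrow D$, $D\Rightarrow C$) carry \emph{no} context inherited from the conclusion, while the conclusion's contexts $\Gamma$ and $\Delta$ are completely arbitrary. So to weaken the conclusion on the left one simply re-applies the rule with $\Gamma$ replaced by $D,\Gamma$, and to weaken on the right one replaces $\Delta$ by $\Delta,D$; the premises and their derivations are untouched, so the height is unchanged.

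No step here is genuinely difficult — the argument is mechanical — so there is no real obstacle; the one point worth flagging explicitly is precisely the context-insensitivity of $\rightarrow_R$ and $\rightarrow_{LR}$, which is what makes height-preserving weakening immediate for those rules without even invoking the induction hypothesis. In writing this up I would display the $\wedge_L$ (or $\wedge_R$) case and the $\rightarrow_{LR}$ case in full and note that the others are entirely analogous.
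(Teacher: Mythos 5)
Your proposal is correct and follows essentially the same route as the paper: induction on derivation height, re-applying the propositional rules to weakened premises obtained from the induction hypothesis, and simply enlarging the arbitrary contexts in the conclusions of $\rightarrow_{R}$ and $\rightarrow_{LR}$ since their premises carry no context. The observation that the implication rules need no appeal to the induction hypothesis is exactly the point the paper's proof relies on.
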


\begin{proof}
The proof is by induction on height of derivation.
If the last rule applied is $ \wedge_{L} $, $ \wedge_{R}$, $ \vee_{L} $ or $ \vee_{R}$, then we have to apply the same rule to the weakened premiss(es), which are
derivable by IH.
If it is by $ \rightarrow_{R} $ or $\rightarrow_{LR} $ rule,
we proceed by adding $  D$ to the appropriate weakening context of the
conclusion of that rule instance. To illustrate, if the last rule is $ \rightarrow_{R}$,
we transform:
 $$\frac{A\Rightarrow B}{\Gamma\Rightarrow A\rightarrow B}\rightarrow_{R}$$
into
 $$\frac{A\Rightarrow B}{\Gamma, D\Rightarrow A\rightarrow B}\rightarrow_{R}$$
and if the last rule is $ \rightarrow_{LR}$,
we transform:
$$\frac{A\Rightarrow B~~~B\Rightarrow A~~~C\Rightarrow E~~~E\Rightarrow C}{\Gamma , A\rightarrow C\Rightarrow B\rightarrow E}\rightarrow_{LR}$$
into
$$\frac{A\Rightarrow B~~~B\Rightarrow A~~~C\Rightarrow E~~~E\Rightarrow C}{\Gamma , D, A\rightarrow C\Rightarrow B\rightarrow E}\rightarrow_{LR} $$
\end{proof}

In the following, the notation $ \vdash_{n} \Gamma\Rightarrow C$ will stand for: the sequent $\Gamma\Rightarrow C$ in \gwf is derivable with a height of derivation
at most $ n $.

For proving the admissibility of contraction, we will need the following inversion lemma:

\begin{lem}\label{2}
In \gwf we have:
\begin{enumerate}
\item If $ \vdash_{n} A\wedge B, \Gamma \Rightarrow \Delta $, then $ \vdash_{n} A, B, \Gamma \Rightarrow \Delta $.
\item If $ \vdash_{n} A\vee B, \Gamma \Rightarrow \Delta $, then $ \vdash_{n} A, \Gamma \Rightarrow \Delta $ and $ \vdash_{n} B, \Gamma \Rightarrow \Delta $.
\item If $ \vdash_{n}  \Gamma \Rightarrow \Delta , A\wedge B $, then $ \vdash_{n}  \Gamma \Rightarrow \Delta,  A $ and $ \vdash_{n}  \Gamma \Rightarrow \Delta, B $.
\item If $ \vdash_{n}  \Gamma \Rightarrow \Delta , A\vee B $, then $ \vdash_{n}  \Gamma \Rightarrow \Delta,  A, B$.
\end{enumerate}
\end{lem}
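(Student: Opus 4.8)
The plan is to prove all four inversion statements simultaneously by induction on the height $n$ of the derivation, handling each of the four claims by case analysis on the last rule applied. The guiding observation is that in \gwf the only rules that touch a context formula inside $\Gamma$ or a context formula in $\Delta$ are $\wedge_L,\wedge_R,\vee_L,\vee_R$ themselves (since $\rightarrow_R$ and $\rightarrow_{LR}$ both discard the side contexts $\Gamma,\Delta$ in their premises), so the argument is genuinely routine once the bookkeeping is set up. I will treat the four parts together because, although they do not call on each other, they share exactly the same skeleton; alternatively one can present part (1) in full and remark that (2)--(4) are analogous.

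For concreteness take part (1). In the base case $n=0$ the sequent $A\wedge B,\Gamma\Rightarrow\Delta$ is an axiom or an instance of $\bot_L$; in either case the principal atom (or $\bot$) lies in $\Gamma$ or $\Delta$, not in $A\wedge B$, so $A,B,\Gamma\Rightarrow\Delta$ is again an axiom or a $\bot_L$-instance of height $0$. For the inductive step, if the last rule is $\rightarrow_R$ or $\rightarrow_{LR}$, its premises do not mention the context $A\wedge B,\Gamma$ at all (the premises are of the form $A'\Rightarrow B'$), so we may simply replace $A\wedge B,\Gamma$ by $A,B,\Gamma$ in the conclusion and reapply the same rule, keeping the height. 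If the last rule is one of the $\vee$ or $\wedge$ rules acting on a formula of $\Gamma$ or $\Delta$ other than $A\wedge B$, we apply the induction hypothesis to the (shorter) premise(s) and reapply the rule. The one genuinely interesting subcase is when $A\wedge B$ is itself the principal formula of a $\wedge_L$ step: then the premise is precisely $A,B,\Gamma\Rightarrow\Delta$ with height $n-1\le n$, and we are done. Parts (2)--(4) run identically, with the "interesting" subcase being $\vee_L$, $\wedge_R$, $\vee_R$ principal, respectively; in part (2) note that a single $\vee_L$ step with principal $A\vee B$ yields both required premises at once.

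I expect no real obstacle here — the lemma is the standard height-preserving invertibility of the propositional rules, and the only thing to be careful about is that $\rightarrow_R$ and $\rightarrow_{LR}$ have no context in their premises, so they can never be the rule "responsible" for introducing $A\wedge B$ (or $A\vee B$) and are trivially handled by copying the rule with the modified context. A minor point worth stating explicitly is height preservation: in every case we either reuse an axiom/$\bot_L$ (height $0$), or peel off one $\wedge_L$/$\vee_L$/$\wedge_R$/$\vee_R$ application and use its premise directly (height drops by one), or apply the IH to premises of height $<n$ and then one rule application (height at most $n$). So the bound $\vdash_n$ is maintained throughout, which is exactly what is needed for the admissibility-of-contraction argument that follows.
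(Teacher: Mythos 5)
Your proof is correct and follows essentially the same route as the paper, which simply performs the induction on $n$ and defers the details to the standard intuitionistic argument (Theorem 2.3.5 of Negri--von Plato). The one \gwf-specific point -- that $\rightarrow_{R}$ and $\rightarrow_{LR}$ discard their side contexts, so their premises are untouched and the rule is just re-applied with the modified context, while their principal formulas are implications and hence never the formula being inverted -- is exactly the observation needed, and you handle it and the height bookkeeping correctly.
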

\begin{proof}
The proof is by induction on $ n $ and similar to the intuitionistic case, (see, Theorem 2.3.5, \cite{Neg}).
\end{proof}
Next we prove the admissibility of the rule of contraction in {\sf GWF}:

\begin{thm}\label{1c}
\textbf{Height-preserving contraction.}
The left and right rules of contraction are height-preserving admissible in \gwf.
$$\frac{D, D, \Gamma\Rightarrow \Delta}{D, \Gamma\Rightarrow \Delta}L_{c}~~~~~~~~\frac{ \Gamma\Rightarrow \Delta, D, D}{\Gamma\Rightarrow \Delta, D}R_{c}$$
\end{thm}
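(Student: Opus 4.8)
The plan is to argue by induction on the height $n$ of the derivation of the premise, treating left and right contraction simultaneously so that the induction hypothesis is available for both. For the base case, if the premise $D,D,\Gamma\Rightarrow\Delta$ (or $\Gamma\Rightarrow\Delta,D,D$) is an axiom $Ax$ or an instance of $\bot_L$, then deleting one copy of $D$ still leaves an axiom or a $\bot_L$-instance, since $Ax$ and $\bot_L$ only require the presence of a principal atom on both sides, or of $\bot$ on the left, and these are unaffected by dropping a duplicate. For the inductive step, I would split on the last rule applied in the derivation of the premise, and in each case distinguish whether the contracted formula $D$ is a side formula (lies in a context) or is principal in that last rule.

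When $D$ is a side formula of the last rule, the two copies of $D$ sit in the context $\Gamma$ or $\Delta$ of the premise(s), one applies the induction hypothesis to the immediate subderivation(s) to contract there, and then reapplies the same rule; here the fact that $L_w, R_w$ are height-preserving admissible (Theorem~\ref{1b}) is what keeps the height from growing, and one also uses that heights of premises are strictly smaller. The interesting subcases are the ones where one copy of $D$ is principal. If the last rule is $\wedge_L$, $\vee_L$, $\wedge_R$, or $\vee_R$ and $D$ is principal, I would first use the inversion lemma (Lemma~\ref{2}) height-preservingly to decompose the \emph{other}, non-principal copy of $D$ into its components in the premise(s), then apply the induction hypothesis to contract the component formulas, and finally reapply the rule; this is exactly the standard Negri–von Plato argument for G3-style systems. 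For instance, for $D=A\wedge B$ principal in $\wedge_L$, the premise above the line is $A,B,A\wedge B,\Gamma\Rightarrow\Delta$; by inversion it is derivable in the same height as $A,B,A,B,\Gamma\Rightarrow\Delta$, then two applications of the induction hypothesis on $A$ and on $B$ give $A,B,\Gamma\Rightarrow\Delta$, whence $\wedge_L$ yields $A\wedge B,\Gamma\Rightarrow\Delta$ with no height increase.

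The genuinely new cases, and the ones I expect to be the main obstacle, are those involving the implication rules $\rightarrow_R$ and $\rightarrow_{LR}$, because their conclusions have the very special shape $\Gamma\Rightarrow A\rightarrow B,\Delta$ and $\Gamma, A\rightarrow C\Rightarrow B\rightarrow D,\Delta$ in which the premises $A\Rightarrow B$ (etc.) carry \emph{no context at all}. The good news is that this same feature makes contraction on a context formula almost trivial: since the contexts $\Gamma,\Delta$ are pure weakening contexts that never feed into the premises, contracting a duplicated side formula in $\Gamma$ or $\Delta$ just means reapplying the rule with the shorter context, with no appeal even to the induction hypothesis. The only remaining worry is contraction on a \emph{principal} implication, i.e.\ two copies of $A\rightarrow C$ on the left with $A\rightarrow C$ principal in $\rightarrow_{LR}$, or two copies of $A\rightarrow B$ on the right with $A\rightarrow B$ principal in $\rightarrow_R$ or $\rightarrow_{LR}$; but in each such case the second copy lies in the weakening context (not among the active formulas), so it can simply be deleted by reapplying the same rule instance to the same premises — again without increasing the height. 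I would spell out $\rightarrow_R$ and one representative $\rightarrow_{LR}$ subcase in detail and remark that the others are analogous, so that the write-up stays short.
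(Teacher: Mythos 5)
Your proposal is correct and follows essentially the same route as the paper: induction on the height of the premise's derivation, splitting on whether the contracted formula is principal, handling principal $\wedge$/$\vee$ via the inversion lemma as in the standard G3-style argument, and observing that for $\rightarrow_R$ and $\rightarrow_{LR}$ the duplicate necessarily sits in the pure weakening context, so the same rule instance can simply be reapplied with the shorter context. The only cosmetic difference is your appeal to height-preserving weakening in the side-formula case, which is not actually needed there.
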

\begin{proof}
The proof is by  induction on the height of  derivation of the premiss for left and right contraction. The base case is straightforward.
We have two cases according to whether the contraction formula is not principal or is principal in the last inference step.

If the contraction formula $ D $ is not principal in the last rule, then the proof goes through as for intuitionistic logic (see, Theorem 2.4.1, \cite{Neg}).

If the contraction formula $ D $ is principal in the last rule, we have three cases according to the form  of $ D $. If $ D=A\wedge B $ or $ D=A\vee B $, then the proof goes through as for intuitionistic logic (see, Theorem 2.4.1, \cite{Neg}).

Assume $ D=A\rightarrow B $. Then the last step is $\rightarrow_{LR}  $  or $ \rightarrow_{R} $. To illustrate, if the last rule is $\rightarrow_{LR}  $, then there exist formulas $E  $ and $ F $ such that $ \Delta=E\rightarrow F , \Delta^{'}$. In this case, we transform:
$$\frac{A\Rightarrow E~~~E\Rightarrow A~~~B\Rightarrow F~~~F\Rightarrow B}{\Gamma, A\rightarrow B , A\rightarrow B\Rightarrow E\rightarrow F , \Delta^{'}}\rightarrow_{LR}  $$
into
$$\frac{A\Rightarrow E~~~E\Rightarrow A~~~B\Rightarrow F~~~F\Rightarrow B}{\Gamma,  A\rightarrow B\Rightarrow E\rightarrow F , \Delta^{'}}\rightarrow_{LR} $$
A similar argument applies to all the other cases.
\end{proof}

\begin{dfn}
 The \textbf{cut-height}  of an instance of the rule of cut in a derivation is the sum of heights of derivation of the two premises of cut.
\end{dfn}

\begin{thm}\label{1d}
The rule of cut  is addmissible in {\sf GWF}.
$$ \frac{\Gamma\Rightarrow  D, \Delta~~~~~~D, \Gamma^{'}\Rightarrow \Delta^{'}}{\Gamma,\Gamma^{'} \Rightarrow \Delta, \Delta^{'}}Cut$$
\end{thm}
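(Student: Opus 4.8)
The plan is to prove cut admissibility by the standard Gentzen-style double induction: the \emph{outer} induction is on the weight $w(D)$ of the cut formula, and the \emph{inner} induction is on the cut-height, i.e.\ the sum of the heights of the derivations of the two premises. Using height-preserving weakening (Theorem~\ref{1b}), height-preserving contraction (Theorem~\ref{1c}), and the inversion lemma (Lemma~\ref{2}) as already-available tools, I would organize the argument into the usual three groups of cases.

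First, the base cases: if either premise is an axiom $Ax$ or an instance of $\bot_L$, the conclusion of the cut is itself an axiom or an instance of $\bot_L$, or follows from one of the premises by height-preserving weakening. For instance, if the left premise $\Gamma \Rightarrow D,\Delta$ is an axiom because some atom $p$ occurs on both sides, then $\Gamma,\Gamma' \Rightarrow \Delta,\Delta'$ is again an axiom; if it is an axiom because $D$ itself is the atom $p$ occurring in $\Gamma$, then the conclusion follows from the right premise by weakening. Second, the case where $D$ is not principal in the last rule of at least one premise: here I push the cut upward past that rule, applying the inner induction hypothesis (smaller cut-height) to the premise(s) of that rule and then reapplying the rule; height-preserving weakening handles the bookkeeping of contexts, and Lemma~\ref{2} is used exactly where one needs to commute a cut past a rule whose principal formula reappears in a context.

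Third, and this is where the real work lies, the principal cases, where $D$ is principal in the last rule of both premises. For $D = A \wedge B$, $D = A \vee B$ these reduce in the familiar way to cuts on $A$ and on $B$, which have smaller weight, so the outer induction hypothesis applies (possibly after a contraction via Theorem~\ref{1c}). The genuinely new case is $D = A \rightarrow B$, produced on the right by $\rightarrow_R$ or $\rightarrow_{LR}$ and consumed on the left by $\rightarrow_{LR}$ (with $A \rightarrow B$ in the left context of that rule). When the left premise ends in $\rightarrow_R$, it has premise $A \Rightarrow B$, and the right premise ending in $\rightarrow_{LR}$ on the principal formula $A\rightarrow B$ supplies premises $A \Rightarrow B'$, $B' \Rightarrow A$, $C' \Rightarrow D'$, $D' \Rightarrow C'$ for suitable $B', C', D'$; one then has to reassemble a derivation of the desired end-sequent $\Gamma,\Gamma' \Rightarrow \Delta,\Delta'$ using cuts on $A$ and on $B$ — both of strictly smaller weight than $A\rightarrow B$ — together with the premises just listed, and then finish with $\rightarrow_{LR}$ and weakening. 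The subcase where both premises end in $\rightarrow_{LR}$ on $A\rightarrow B$ is analogous but requires chaining two $\rightarrow_{LR}$-premise bundles and again producing only cuts of lower weight.

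I expect the main obstacle to be precisely this $\rightarrow_{LR}/\rightarrow_{LR}$ (and $\rightarrow_R/\rightarrow_{LR}$) principal case: the $\rightarrow_{LR}$ rule has four premises and discards its entire side context on the minor formulas, so one must check carefully that the recombination can be carried out using only cuts on the immediate subformulas $A$ and $B$ and does not secretly require a cut on $A\rightarrow B$ itself or on a formula of equal or greater weight. Verifying that the weight genuinely drops in every branch of this case — and that the leftover contexts $\Gamma,\Gamma',\Delta,\Delta'$ can always be restored by height-preserving weakening and, where duplicates appear, removed by height-preserving contraction — is the crux of the proof; the remaining cases are routine adaptations of the intuitionistic argument in~\cite{Neg}.
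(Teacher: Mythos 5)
Your proposal is correct and follows essentially the same strategy as the paper's proof: a primary induction on the weight of the cut formula with a sub-induction on cut-height, routine treatment of the axiom, non-principal, and $\wedge$/$\vee$ cases, and resolution of the crucial $\rightarrow_R$/$\rightarrow_{LR}$ and $\rightarrow_{LR}$/$\rightarrow_{LR}$ principal cases by chaining the four $\rightarrow_{LR}$-premises with cuts only on the immediate subformulas $A$ and $B$ before reapplying $\rightarrow_R$ or $\rightarrow_{LR}$ (whose built-in arbitrary contexts make the context bookkeeping free). The concern you flag about the recombination secretly needing a cut of non-decreasing weight does not materialize, exactly as your sketch anticipates.
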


\begin{proof}
The proof is by induction on the weight of the cut formula $ D $ with a sub-induction on the cut-height.
The proof is structured as follows: We first address the case where at least one premise in a cut is an axiom or the conclusion of $ \bot_{L} $ and demonstrate the elimination of the cut. For the remaining cases, there are three cases: The cut formula is not principal in either premise of the cut, the cut formula is principal in only one premise of the cut, and the cut formula is principal in both premises of the cut.

In the derivations, it is assumed that the topsequents, from left to right, have derivation heights $n,m,k,\ldots$.

\textbf{Cut with an axiom or conclusion of $ \bot_{L} $ as premiss:} The proof is similar to the intuitionistic logic (see, Theorem 2.4.3, \cite{Neg}).

\textbf{Cut with neither premiss an axiom}: We have three cases:

1. Cut formula $ D $ is not principal in the left premiss, that is, not derived by an R-rule. We have the subcases  $ \wedge_{L} $, with $ \Gamma = A\wedge B, \Gamma^{''} $, and $ \vee_{L} $, with $ \Gamma = A\vee B, \Gamma^{''} $ according to the rule used to derive the left premiss. The proofs are similar to the intuitionistic logic (see, Theorem 2.4.3, \cite{Neg}).

2. Cut formula $ D $ is principal in the left premiss only, and the derivation is
transformed into one with a cut of lesser cut-height according to the derivation of
the right premiss. We have six subcases according to the rule used. The proofs of subcases $ \wedge_{L} $, with $ \Gamma^{'}= A\wedge B, \Gamma^{''} $,  $ \vee_{L}$, with $\Gamma^{'} = A\vee B, \Gamma^{''} $,  $ \wedge_{R} $, with $\Delta^{'}= A\wedge B, \Delta^{''}$, and  $ \vee_{R} $, with $\Delta^{'}= A\vee B, \Delta^{''}$ are similar to the intuitionistic logic.

2.5. $ \rightarrow_{R}$, with  $\Delta^{'}= A\rightarrow B, \Delta^{''}$ and the derivation with a cut of cut-height $ n+m+1 $
$$ \frac{\genfrac{}{}{0pt}{}{}{\Gamma\Rightarrow D, \Delta}~~~~\frac{A\Rightarrow B}{D, \Gamma^{'}\Rightarrow A\rightarrow B, \Delta^{''}}\rightarrow_{R}}{\Gamma, \Gamma^{'}\Rightarrow A\rightarrow B,\Delta, \Delta^{''}}Cut$$
is transformed into the derivation without cut
$$\frac{A\Rightarrow B}{\Gamma, \Gamma^{'}\Rightarrow A\rightarrow B , \Delta, \Delta^{''}}\rightarrow_{R} $$

2.6. $ \rightarrow_{LR}$, with  $\Gamma^{'}= A\rightarrow C, \Gamma^{''}$ and $\Delta^{'}= B\rightarrow D, \Delta^{''}$ and the derivation with a cut of cut-height $ n+max \lbrace m, k, l, s\rbrace +1 $
$$ \frac{\genfrac{}{}{0pt}{}{}{\Gamma\Rightarrow D, \Delta}~~~~\frac{A\Rightarrow B ~~B\Rightarrow A~~C\Rightarrow F~~F\Rightarrow C}{D, \Gamma^{''}, A\rightarrow C\Rightarrow B\rightarrow F, \Delta^{''}}\rightarrow_{LR}}{\Gamma, \Gamma^{''}, A\rightarrow C\Rightarrow B\rightarrow F, \Delta , \Delta^{''}}Cut$$
is transformed into the derivation without cut
$$ \frac{A\Rightarrow B ~~B\Rightarrow A~~C\Rightarrow F~~F\Rightarrow C}{\Gamma, \Gamma^{''}, A\rightarrow C\Rightarrow B\rightarrow F, \Delta , \Delta^{''}} \rightarrow_{LR}$$

3. Cut formula $ D $ is principal in both premises, and we have three subcases. The proofs for the cases where $ D=A\wedge B $ and $ D=A\vee B $ are  similar to the intuitionistic logic.

3.3. $ D=A\rightarrow B $ and the derivation is either
$$\frac{\frac{C\Rightarrow A~~~~A\Rightarrow C~~~~E\Rightarrow B~~~~B\Rightarrow E}{\Gamma, C\rightarrow E\Rightarrow A\rightarrow B}\rightarrow_{LR}~~~~\frac{A\Rightarrow F~~~~F\Rightarrow A~~~~B\Rightarrow G~~~~G\Rightarrow B}{A\rightarrow B\Rightarrow F\rightarrow G}\rightarrow_{LR}}{\Gamma, C\rightarrow E\Rightarrow F\rightarrow G}Cut$$
with cut-height $ max\lbrace n, m, k, l\rbrace +1+max\lbrace s, t, u, v\rbrace +1 $, or
  
  $$\frac{\frac{A\Rightarrow B}{ \Gamma \Rightarrow A\rightarrow B}\rightarrow_{R}~~~~~\frac{A\Rightarrow C~~~~C\Rightarrow A~~~~B\Rightarrow F~~~~F\Rightarrow B}{A\rightarrow B\Rightarrow C\rightarrow F}\rightarrow_{LR}}{\Gamma \Rightarrow C\rightarrow F}Cut$$
with  cut-height $n+1+ max\lbrace m, k, l, s\rbrace +1$, or

$$\frac{\frac{C\Rightarrow A~~~~A\Rightarrow C~~~~E\Rightarrow B~~~~B\Rightarrow E}{\Gamma, C\rightarrow E\Rightarrow A\rightarrow B}\rightarrow_{LR}~~~~\frac{F\Rightarrow G}{ A\rightarrow B\Rightarrow F\rightarrow G}\rightarrow_{R}}{\Gamma, C\rightarrow E\Rightarrow F\rightarrow G}Cut$$
with cut-height $ max\lbrace n, m, k, l\rbrace +1+s +1 $, or

  $$\frac{\frac{A\Rightarrow B}{ \Gamma \Rightarrow A\rightarrow B}\rightarrow_{R}~~~~~\frac{C\Rightarrow F}{A\rightarrow B\Rightarrow C\rightarrow F}\rightarrow_{R}}{\Gamma \Rightarrow C\rightarrow F}Cut$$
with cut-height $ n +1+m+1 $.

The first case transformed into the derivation with cuts of cut-heights $ n+s $, $ t+m $, $ k+u $ and $ v+l $   as follows:

$$ \frac{\frac{C\Rightarrow A~~~~~A\Rightarrow F}{C\Rightarrow F}Cut~~~~\frac{F\Rightarrow A~~~A\Rightarrow C}{F\Rightarrow C}Cut~~~~\frac{E\Rightarrow B~~~B\Rightarrow G}{E\Rightarrow G}Cut~~~~\frac{G\Rightarrow B~~~B\Rightarrow E}{G\Rightarrow E}Cut}{\Gamma, C\rightarrow E\Rightarrow F\rightarrow G}\rightarrow LR$$

and the second one transformed into the derivation with cuts of cut-heights $ k+n $  and $max\lbrace n, k \rbrace +1+l $   as follows:

$$\frac{\frac{C\Rightarrow A~~~~A\Rightarrow B}{C\Rightarrow B}Cut  ~~~~\genfrac{}{}{0pt}{}{}{B\Rightarrow F}}{\frac{C\Rightarrow F}{\Gamma \Rightarrow C\rightarrow F}\rightarrow_{R}}Cut $$
in the first cut, cut-height is reduced; in the first and second, weight of cut formula.

Transformation of the last two cases are also easy.
\end{proof}

\subsection{Equivalence with the axiomatic system and syntactic completeness}

In this section we establish the equivalence between the Hilbert-style system \wf and the corresponding sequent calculus, i.e. \gwf. As a consequence, we will show that \gwf is sound and
complete with respect to the appropriate class of neighbourhood frames.

We write $ {\sf  GWF }  \vdash \Gamma \Rightarrow \Delta $ if the sequent $ \Gamma \Rightarrow \Delta $ is derivable in \gwf, and
we say that $ A $ is derivable in \gwf whenever $ {\sf  GWF }  \vdash  \Rightarrow A$. 

For proving the equivalence between the Hilbert-style system \wf and the corresponding sequent calculus, we will need the following inversion lemma:

\begin{lem}\label{44}
If $ \gwf\vdash_{n}\Rightarrow A\rightarrow B$, then $ \gwf\vdash_{n} A\Rightarrow B$. 
\end{lem}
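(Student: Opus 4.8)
## Proof Proposal for Lemma \ref{44}

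The plan is to argue by induction on the height $n$ of the derivation of $\Rightarrow A \rightarrow B$ in \gwf, tracking which rule can produce a sequent of this shape. The key observation is that the succedent $A \rightarrow B$ is an implication with an \emph{empty} antecedent on the left, so only a small number of rules could have been applied last. Specifically, $A \rightarrow B$ must be the principal formula of the last inference, since if it were merely a side formula in some context $\Delta$, the antecedent would have to be nonempty or the rule would have to be one that does not apply here. So the last rule is either $\rightarrow_R$ or $\rightarrow_{LR}$.

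First I would handle the base case: a sequent of the form $\Rightarrow A \rightarrow B$ cannot be an axiom (axioms have an atom on both sides) nor an instance of $\bot_L$ (which requires $\bot$ in the antecedent), so there is no base case to check beyond the vacuous one, and the induction is driven entirely by the two implication rules. If the last rule is $\rightarrow_R$, then by the form of that rule the premise is exactly $A \Rightarrow B$, derivable with height $n-1 \leq n$, and we are done immediately. If the last rule is $\rightarrow_{LR}$, then the conclusion has the shape $\Gamma, C \rightarrow E \Rightarrow A \rightarrow B, \Delta$; but since our sequent has empty antecedent, $\Gamma$ and the formula $C \rightarrow E$ cannot both be absent — the antecedent of a $\rightarrow_{LR}$ conclusion always contains at least the formula $C \rightarrow E$. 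Hence this case is impossible, and only the $\rightarrow_R$ case survives.

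Wait — I should double-check whether $A \rightarrow B$ could appear in the side context $\Delta$ of some $\rightarrow_R$ or $\rightarrow_{LR}$ application whose principal formula is a \emph{different} implication. If the last rule is $\rightarrow_R$ producing $\Gamma \Rightarrow C \rightarrow E, \Delta$ with $A \rightarrow B \in \Delta$, then again the antecedent $\Gamma$ may be empty, so this is genuinely possible, and the premise is $C \Rightarrow E$ — which does not obviously give us $A \Rightarrow B$. So the naive "principal formula" argument is too quick. The right fix is to strengthen the statement slightly, or to invoke height-preserving weakening (Theorem \ref{1b}) together with height-preserving inversion-style reasoning: from $\vdash_n \Rightarrow C \rightarrow E, A \rightarrow B$ one still needs to extract $\vdash_n A \Rightarrow B$. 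I would instead prove the more general claim: \emph{if} $\gwf \vdash_n \Gamma \Rightarrow \Delta, A \rightarrow B$ \emph{then} $\gwf \vdash_n A \Rightarrow B$, by induction on $n$ — in every rule the active premises involving $A \rightarrow B$ either already are $A \Rightarrow B$ (case $\rightarrow_R$ principal) or retain $A \rightarrow B$ in the succedent context (all $\wedge, \vee$ rules, and $\rightarrow_R$/$\rightarrow_{LR}$ non-principal), so the induction hypothesis applies to a premise of strictly smaller height. The main obstacle is precisely this bookkeeping: making sure the generalized statement is genuinely preserved by $\rightarrow_{LR}$, whose conclusion discards its left context into the premises, and confirming that $A \rightarrow B$ never gets "trapped" in a position from which it cannot be recovered. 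Once the generalization is set up correctly, each rule case is routine.
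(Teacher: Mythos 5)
Your first two paragraphs reproduce, correctly, the paper's own argument: the sequent $\Rightarrow A\rightarrow B$ has an empty antecedent and a singleton succedent, so the last inference cannot be $Ax$, $\bot_{L}$, or any left rule (all of these require a nonempty antecedent), cannot be $\wedge_{R}$ or $\vee_{R}$ (their principal formula would have to be syntactically equal to the implication $A\rightarrow B$), and cannot be $\rightarrow_{LR}$ (its conclusion always carries the principal implication in the antecedent). The only possibility is $\rightarrow_{R}$ with $A\rightarrow B$ principal, whose premise is literally $A\Rightarrow B$ at height $n-1\leq n$. You should have stopped there; no induction beyond this case analysis on the last rule is needed.

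Your third paragraph then retracts this correct argument in favour of one that fails. The worry that $A\rightarrow B$ might occur only in the side context $\Delta$ of an $\rightarrow_{R}$ inference with a different principal formula $C\rightarrow E$ cannot arise: the conclusion of such an inference has succedent $C\rightarrow E, \Delta$, which is a multiset with at least two occurrences whenever $A\rightarrow B\in\Delta$, whereas the succedent of the sequent under consideration is the single formula $A\rightarrow B$. Worse, the generalization you propose as a fix --- if $\gwf\vdash_{n}\Gamma\Rightarrow\Delta, A\rightarrow B$ then $\gwf\vdash_{n}A\Rightarrow B$ --- is simply false: $\bot\Rightarrow p\rightarrow q$ is an instance of $\bot_{L}$ of height $0$, and $p\Rightarrow p, q\rightarrow r$ is an axiom, yet $p\Rightarrow q$ (respectively $q\Rightarrow r$) is underivable for distinct atoms. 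The lemma holds precisely because of the restriction to an empty antecedent and a one-formula succedent, and that restriction is exactly what your abandoned first argument exploited.
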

\begin{proof}
The proof is by induction on the height of the derivation of the premises.
There is only one possibility, i.e.; $ \Rightarrow A\rightarrow B $ is derived by the rule $ \rightarrow_{R} $. Then the premiss is what we wanted to derive.
\end{proof}

\begin{thm}\label{3}
Derivability in the sequent system \gwf and in the Hilbert-style  system \wf are
equivalent, i.e.
$${\sf  GWF }  \vdash \Gamma \Rightarrow \Delta ~~~~\mbox{iff}~~~~\vdash_{\wf} \bigwedge \Gamma \rightarrow \bigvee \Delta $$
\end{thm}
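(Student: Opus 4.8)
The plan is to prove the two directions of the biconditional separately, each by induction.

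\textbf{From sequent calculus to Hilbert system.} I would argue by induction on the height of the derivation of $\Gamma \Rightarrow \Delta$ in \gwf, showing in each case that $\vdash_{\wf} \bigwedge \Gamma \rightarrow \bigvee \Delta$. For the base cases, $Ax$ gives $p \wedge (\text{rest of }\Gamma) \rightarrow p \vee (\text{rest of }\Delta)$, which follows from the conjunction/disjunction axioms 1--4 together with transitivity of $\rightarrow$ (available since rule 9 is a theorem-rule; note axiom {\sf I} is \emph{not} assumed in \wf, so I must be careful to use only rule 9, applied to theorems, and the weak deduction theorem where needed); $\bot_L$ follows from axiom 14. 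For the inductive step, each logical rule of \gwf must be matched by a derivation in \wf. The cases $\wedge_L, \wedge_R, \vee_L, \vee_R$ are routine uses of the distributivity axiom 7 and axioms 1--4, 10, 11. The interesting cases are $\rightarrow_R$ and $\rightarrow_{LR}$. For $\rightarrow_R$: from $\vdash_{\wf} A \rightarrow B$ we get $\vdash_{\wf} C \rightarrow (A \rightarrow B)$ for any $C$ by the a fortiori rule (rule 6), hence $\vdash_{\wf} \bigwedge\Gamma \rightarrow (A\rightarrow B) \vee \bigvee\Delta$ using axiom 1 and rule 9. For $\rightarrow_{LR}$: from $\vdash_{\wf} A\rightarrow B$, $\vdash_{\wf} B\rightarrow A$, $\vdash_{\wf} C\rightarrow D$, $\vdash_{\wf} D\rightarrow C$ we get $\vdash_{\wf}(A\leftrightarrow B)$ and $\vdash_{\wf}(C\leftrightarrow D)$ by the conjunction rule, then rule 13 yields $\vdash_{\wf}(A\rightarrow C)\leftrightarrow(B\rightarrow D)$, so $\vdash_{\wf}(A\rightarrow C)\rightarrow(B\rightarrow D)$ by axiom 3 and rule 9, and finally we prefix $\bigwedge\Gamma$ via axiom 3 / a fortiori and append $\bigvee\Delta$ via axiom 1, composing with rule 9.

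\textbf{From Hilbert system to sequent calculus.} By Theorem~\ref{z3} (weak deduction theorem), $\vdash_{\wf} \bigwedge\Gamma \rightarrow \bigvee\Delta$ iff $\Gamma \vdash_{\wf} \bigvee\Delta$ (and the empty-$\Gamma$ case iff $\vdash_{\wf} \bigvee\Delta$), so it suffices to show that whenever $\vdash_{\wf} A$ one has ${\sf GWF}\vdash\ \Rightarrow A$, and more generally to track derivations from assumptions. I would proceed by induction on the length of the Hilbert derivation. Each axiom 1--14 must be shown derivable as a sequent $\Rightarrow (\text{axiom})$ in \gwf: these are finite concrete derivations using Lemma~\ref{1} (identity on arbitrary formulas), $\rightarrow_R$, $\rightarrow_{LR}$, and the $\wedge/\vee$ rules — for instance axiom 13 is exactly what $\rightarrow_{LR}$ plus $\wedge_L, \wedge_R$ are designed to give. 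For the rules: MP is handled by admissibility of cut (Theorem~\ref{1d}) together with Lemma~\ref{44} — from $\Rightarrow A$ and $\Rightarrow A\rightarrow B$, invert the latter to $A\Rightarrow B$ and cut; the a fortiori rule uses $\rightarrow_R$ plus weakening (Theorem~\ref{1b}); the conjunction rule uses $\wedge_R$; rules 9, 10, 11, 13 are simulated by the corresponding logical rules combined with cut and Lemma~\ref{44}. The side-condition bookkeeping from Definition~\ref{hh} (which rules may be applied under assumptions) matches the sequent side exactly because $\rightarrow_R$ and $\rightarrow_{LR}$ discard the context $\Gamma$, which is precisely why those Hilbert rules are restricted to assumption-free derivations.

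\textbf{Main obstacle.} The hard part is the soundness direction (sequent $\Rightarrow$ Hilbert) for the implication rules, together with getting the axiom-by-axiom verifications in the other direction exactly right: one must use only the rules genuinely available in \wf — in particular transitivity of $\rightarrow$ is present only as the \emph{rule} 9 (not as axiom {\sf I}), and the a fortiori rule is needed to introduce arbitrary contexts on the left of a $\rightarrow_R$-conclusion. Handling multisets (duplicate formulas in $\Gamma,\Delta$) is harmless given height-preserving contraction, and the empty-context cases need the small separate argument noted above, but these are routine. Once MP is seen to correspond to cut via the inversion Lemma~\ref{44}, and rule 13 is seen to correspond to $\rightarrow_{LR}$, the induction closes; soundness and completeness of \gwf with respect to NB-neighbourhood frames then follow immediately by combining this equivalence with Theorem~\ref{4}.
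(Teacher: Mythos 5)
Your proposal is correct and follows essentially the same route as the paper: both directions by induction on derivation height, with $\rightarrow_{LR}$ matched to rule 13 via the biconditionals, MP and rule 9 simulated by the (previously established) admissible cut together with the inversion Lemma~\ref{44}, and the axioms verified one by one using Lemma~\ref{1} and the logical rules. The only presentational difference is your explicit appeal to the weak deduction theorem to organize the right-to-left direction, which the paper handles implicitly through the inversion lemmas.
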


\begin{proof}
\textbf{ Left-to-right:} The proof proceeds by induction on the height of the derivation in {\sf  GWF}.
If the derivation has height $  0$, we have an axiom or an instance on $ \bot_{L} $. In both cases the claim holds. If the height is $n + 1$, we consider the last rule applied in the derivation.

We just prove for the rules $ \vee_{L} $ and $ \rightarrow_{LR} $ as follows, respectfully. The proof of other rules are similar and easy.

Assume the last derivation is by the rule $ \vee_{L}$. In this case we have ${\sf  GWF }  \vdash_{n+1} A\vee B, \Gamma \Rightarrow C$  (i.e, this sequent is derivable with a height of derivation $ n+1 $). We need to show that  $\vdash_{\wf}   \Gamma \bigwedge (A\vee B)  \rightarrow C $. By assumption we have $\vdash_{n} A , \Gamma \Rightarrow C  $ and  $\vdash_{n} B , \Gamma \Rightarrow C  $. By the induction hypothesis we conclude that  $\vdash_{\wf} A\bigwedge\Gamma \rightarrow C  $ and  $\vdash_{\wf}  B\\bigwedge \Gamma\rightarrow C  $. Using these results, we have the following proof in the Hilbert-style proof system for \wf:
\begin{enumerate}
\item $\vdash_{\wf} A\bigwedge\Gamma \rightarrow C $ ~~~~~~~~~~~by IH
\item $\vdash_{\wf}  B\bigwedge \Gamma\rightarrow C  $ ~~~~~~~~~~~by IH
\item $\vdash_{\wf}  (A\bigwedge\Gamma)\vee (B\bigwedge \Gamma)\rightarrow C  $~~~~~~~~~~~by 1,2 and rule 11
\item $\vdash_{\wf}   \Gamma \bigwedge (A\vee B)  \rightarrow (A\bigwedge\Gamma)\vee (B\bigwedge \Gamma)  $~~~~~~axiom 7
\item $\vdash_{\wf}   \Gamma \bigwedge (A\vee B)  \rightarrow C $~~~~~~~~~~~~~~~~~~by 3, 4 and rule 9
\end{enumerate}

Assume the last derivation is by the rule $ \rightarrow_{LR} $. In this case we have:
 $${\sf  GWF }  \vdash_{n+1}  \Gamma, A\rightarrow C\Rightarrow B\rightarrow D, \Delta.$$
We need to show that  $ \vdash_{\wf} \Gamma \bigwedge (A\rightarrow C) \rightarrow (B\rightarrow D) \bigvee \Delta$. By assumption we have:
 $$\vdash_{n} A   \Rightarrow B,  ~~\vdash_{n} B   \Rightarrow A,~~ \vdash_{n} C  \Rightarrow D,~~\vdash_{n} D\Rightarrow C .$$
Now, by the induction hypothesis we conclude that: 
 $$ \vdash_{\wf} A \leftrightarrow B~~~~~ and  ~~~~~\vdash_{H}C  \leftrightarrow D,$$
and so by the rule 13 of Figure \ref{fig:axioms:wf}, we have $ \vdash_{\wf} (A \rightarrow C)\leftrightarrow (B\rightarrow D) $. 

 Finally, since we have $ \vdash_{\wf} \Gamma \bigwedge (A\rightarrow C) \rightarrow    (A\rightarrow C)$ and $ \vdash_{\wf}  (B\rightarrow D) \rightarrow (B\rightarrow D) \bigvee \Delta$, we conclude that $ \vdash_{\wf}   \Gamma \bigwedge (A\rightarrow C) \rightarrow (B\rightarrow D) \bigvee \Delta$. 

\textbf{Right-to-left:} 
We argue by induction on the height of the axiomatic derivation in \wf. In order to prove the base case, we just need to show that all axioms  of the Hilbert-style system for \wf  (Figure \ref{fig:axioms:wf}) can be deduced
in \gwf. A straightforward application of the rules of the sequent calculus \gwf, possibly using Lemma \ref{1}. As an example we show the axiom 7 from Figure \ref{fig:axioms:wf} as follows  (the proof of other axioms are easy):

$$ \frac{\frac{\frac{\frac{\frac{\frac{\frac{•}{A, B\Rightarrow A}\ref{1}~~~\frac{•}{A, B\Rightarrow B}\ref{1}}{A, B\Rightarrow A\wedge B}\wedge_{R}}{A, B\Rightarrow A\wedge B, A\wedge C}R_{w}}{A, B\Rightarrow (A\wedge B)\vee (A\wedge C)}\vee_{R}~~~~~\frac{\frac{\frac{\frac{•}{A, C\Rightarrow C}\ref{1}~~~\frac{•}{A, C\Rightarrow A}\ref{1}}{A, C\Rightarrow A\wedge C}\wedge_{R}}{A, C\Rightarrow A\wedge B, A\wedge C}R_{w}}{A, C\Rightarrow (A\wedge B)\vee (A\wedge C)}\vee_{R}}{A, B\vee C\Rightarrow (A\wedge B)\vee (A\wedge C)}\vee_{L}}{(A\wedge (B\vee C))\Rightarrow (A\wedge B)\vee (A\wedge C)}\wedge_{L}}{\Rightarrow (A\wedge (B\vee C))\rightarrow (A\wedge B)\vee (A\wedge C)}\rightarrow_{R}$$

For the inductive steps, we need to show that the rules of the Hilbert-style system for \wf  (Figure \ref{fig:axioms:wf}), considering the Definition~\ref{hh}, can be deduced
in \gwf. As an example, if the last step is by the rule 13  from Figure \ref{fig:axioms:wf}, then $ \Gamma =\emptyset $ and $ \Delta $ is $ (A\rightarrow C)\leftrightarrow (B\rightarrow D) $. We know that we have derived $ (A\rightarrow C)\leftrightarrow (B\rightarrow D) $ from $ A\leftrightarrow B $ and $ C\leftrightarrow D $. Remember that $ A\leftrightarrow B $ and $ C\leftrightarrow D $ are defined as $ (A\rightarrow B)\wedge (B\rightarrow A) $ and $ (C\rightarrow D)\wedge (D\rightarrow C) $. Thus we assume, by induction hypothesis, that $ \gwf  \vdash \Rightarrow  (A\rightarrow B)\wedge (B\rightarrow A) $ and 
$ \gwf  \vdash \Rightarrow  (C\rightarrow D)\wedge (D\rightarrow C) $. By Lemma \ref{2}, we obtain that $  \gwf  \vdash \Rightarrow  A\rightarrow B$, $  \gwf  \vdash \Rightarrow  B\rightarrow A$, $  \gwf  \vdash \Rightarrow  C\rightarrow D$, $  \gwf  \vdash \Rightarrow  D\rightarrow C$. Again, by Lemma \ref{44}, we conclude $  \gwf  \vdash   A\Rightarrow B$, $  \gwf  \vdash   B\Rightarrow A$, $  \gwf  \vdash  C\Rightarrow  D$, $  \gwf  \vdash  D \Rightarrow C$. We can thus proceed as follows (for the sake of space, we divide the following proof into several smaller derivations):

$$ \frac{\frac{\frac{}{A\Rightarrow B}\!I\!H+\ref{2}~~\frac{•}{B\Rightarrow A}\!I\!H+\ref{2}~~\frac{•}{C\Rightarrow D}\!I\!H+\ref{2}~~\frac{•}{D\Rightarrow C}\!I\!H+\ref{2}}{(A\rightarrow C)\Rightarrow(B\rightarrow D)}\rightarrow_{LR}}{\Rightarrow (A\rightarrow C)\rightarrow (B\rightarrow D)}\rightarrow_{R}$$

$$\frac{\frac{\frac{}{C\Rightarrow D}\!I\!H+\ref{2}~~\frac{•}{D\Rightarrow C}\!I\!H+\ref{2}~~\frac{•}{B\Rightarrow A}\!I\!H+\ref{2}~~\frac{•}{A\Rightarrow B}\!I\!H+\ref{2}}{(B\rightarrow D)\Rightarrow(A\rightarrow C)}\rightarrow_{LR}}{\Rightarrow (B\rightarrow D)\rightarrow (A\rightarrow C)}\rightarrow_{R} $$

We now proceed to the next segment of the derivation,

$$\frac{\Rightarrow (A\rightarrow C)\rightarrow (B\rightarrow D)~~~~\Rightarrow (B\rightarrow D)\rightarrow (A\rightarrow C)}{\Rightarrow (A\rightarrow C)\leftrightarrow (B\rightarrow D)}\wedge_{R}$$
We can derive the rule 9 from Figure \ref{fig:axioms:wf} by the cut rule and considering that we have previously established the cut-admissibility theorem, therefore we are permitted to use cut rule in the proof of this theorem.
\end{proof}  

By combining Theorem \ref{4} and Theorem  \ref{3} we have the following result.

\begin{cor}
The calculus \gwf is sound and complete with respect to the class of all $ N\!B $-neighbourhood frames for \wf.
\end{cor}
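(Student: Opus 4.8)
The plan is to compose the two equivalences that are already in hand. First I would fix, once and for all, a reading of sequents as formulas: set $\iota(\Gamma \Rightarrow \Delta) := \bigwedge \Gamma \rightarrow \bigvee \Delta$, with the conventions that an empty left-hand side is read as a fixed theorem (say $\bot \rightarrow \bot$) and an empty right-hand side as $\bot$, so that $\iota(\Gamma \Rightarrow \Delta)$ is always a genuine formula of the language. A sequent $\Gamma \Rightarrow \Delta$ is then declared \emph{valid} on an $N\!B$-neighbourhood frame $\fl$ when $\M, w \Vdash \iota(\Gamma \Rightarrow \Delta)$ for every model $\M$ based on $\fl$ and every $w \in W$; validity on the class of all $N\!B$-frames for \wf means validity on every such frame. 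This matches exactly the informal interpretation of sequents stated before Definition of \gwf.

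With this terminology in place, soundness and completeness of \gwf become a short chain. By Theorem~\ref{3}, ${\sf GWF} \vdash \Gamma \Rightarrow \Delta$ iff $\vdash_{\wf} \iota(\Gamma \Rightarrow \Delta)$. By Theorem~\ref{4}, $\vdash_{\wf} \iota(\Gamma \Rightarrow \Delta)$ iff $\iota(\Gamma \Rightarrow \Delta)$ is valid on the class of all $N\!B$-neighbourhood frames. Putting these together yields that ${\sf GWF} \vdash \Gamma \Rightarrow \Delta$ iff $\Gamma \Rightarrow \Delta$ is valid on the class of all $N\!B$-frames, which is precisely the assertion that \gwf is sound (the left-to-right direction) and complete (the right-to-left direction). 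Specializing to $\Gamma$ empty and $\Delta = A$ recovers the statement for single formulas: $A$ is derivable in \gwf iff $A$ is valid on all $N\!B$-frames.

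The only point that needs a word of care — and the closest thing to an obstacle here — is consistency of conventions for empty multisets: the sequent-to-formula translation used in Theorem~\ref{3} must be literally the same $\iota$ that is used to define semantic validity, and once the conventions above are adopted uniformly there is nothing further to verify. As an independent sanity check, the soundness half could instead be proved directly by a routine induction on \gwf-derivations, using the clauses for $\wedge$, $\vee$ and $\rightarrow$ in Definition~\ref{truth} together with the defining closure condition on $N\!B$ to handle the axioms, $\rightarrow_R$ and $\rightarrow_{LR}$; but the combinatorial route via Theorems~\ref{4} and~\ref{3} is shorter and is the one I would present.
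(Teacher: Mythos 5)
Your proof is correct and is exactly the paper's argument: the corollary is obtained by composing Theorem~\ref{3} (equivalence of \gwf with \wf) with Theorem~\ref{4} (soundness and completeness of \wf for $N\!B$-frames). The extra care you take with the sequent-to-formula translation and the empty-multiset conventions is a reasonable elaboration of what the paper leaves implicit, but it does not change the route.
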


\section{Sequent calculi for some subintuitionistic logics between \wf and \f}\label{some}

In this section, we will introduce sequent calculi for some of the logics that were presented in Section \ref{sec:modneigh}. 

We can extend \gwf by adding the following rules:

$$ \frac{A\Rightarrow C, B ~~~~A, D\Rightarrow B~~~~C\Rightarrow A, D~~~~C, B\Rightarrow D}{\Gamma, C\rightarrow D\Rightarrow A\rightarrow B, \Delta}\rightarrow_{LR_{N}} $$

$$\frac{A\Rightarrow C, B~~~~A, D\Rightarrow B}{\Gamma, C\rightarrow D\Rightarrow A\rightarrow B, \Delta}\rightarrow_{LR_{N_{2}}}$$

$$  \frac{A\Rightarrow B }{\Gamma,  C\rightarrow A \Rightarrow  C \rightarrow B, \Delta}\rightarrow_{\widehat{\sf C}} $$

$$  \frac{A\Rightarrow B }{\Gamma, B\rightarrow C\Rightarrow A \rightarrow C, \Delta}\rightarrow_{ \widehat{\sf D}}$$

If $ \sf {X} $ is one of the above rules, then $ \sf {GWFX }$ is obtained by adding rule $\sf{ X} $ to the sequent calculus \gwf.

First we prove the admissibility of weakening and contraction for ${\sf  GWFX}$.

\begin{lem}\label{1N}
The sequent $ C, \Gamma\Rightarrow \Delta, C $ is derivable in ${\sf  GWFX }$ for an arbitrary formula $ C $ and arbitrary contexts $ \Gamma $ and $\Delta$.
\end{lem}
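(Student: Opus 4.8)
The plan is to observe that the proof of Lemma~\ref{1} transfers verbatim, because every inference rule used there already belongs to \gwf, and \gwf is contained in ${\sf GWFX}$. Concretely, I would argue by induction on the weight $w(C)$. For $w(C)\le 1$ the formula $C$ is $\bot$, an atom $p$, or $\bot\rightarrow\bot$; in the first case $C,\Gamma\Rightarrow\Delta,C$ is an instance of $\bot_L$, in the second an instance of $Ax$, and in the third it is obtained by applying $\rightarrow_R$ to $\bot\Rightarrow\bot$, exactly as in Lemma~\ref{1}. For the inductive step I would split on the shape of $C$: if $C=A\wedge B$ or $C=A\vee B$, the sequent is derived from instances of the induction hypothesis by $\wedge_L,\wedge_R$ (resp.\ $\vee_L,\vee_R$); if $C=A\rightarrow B$, then since $w(A),w(B)<w(C)$ the sequents $A\Rightarrow A$ and $B\Rightarrow B$ are derivable by IH, and one application of $\rightarrow_{LR}$ with premises $A\Rightarrow A$, $A\Rightarrow A$, $B\Rightarrow B$, $B\Rightarrow B$ yields $A\rightarrow B,\Gamma\Rightarrow A\rightarrow B,\Delta$.

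The key point making all of this legitimate in ${\sf GWFX}$ is monotonicity of derivability under adding rules: ${\sf GWFX}$ has strictly more inference rules than \gwf, so every \gwf-derivation is in particular a ${\sf GWFX}$-derivation. Hence Lemma~\ref{1N} is really an immediate corollary of Lemma~\ref{1}, and there is no genuine obstacle to overcome; the statement is recorded separately only so that it can be cited in the weakening/contraction (and later cut) admissibility proofs for the extended calculi, where one needs the generalized identity sequent available in each ${\sf GWFX}$.

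If desired, one could also remark that for some of the new rules an alternative, shorter derivation of the identity is available — for instance, for $C=E\rightarrow A$ one could use $\rightarrow_{\widehat{\sf C}}$ applied to $A\Rightarrow A$ in ${\sf GWF}{\sf \widehat{C}}$, or $\rightarrow_{LR_N}$ with the appropriate four premises in ${\sf WF_N}$'s calculus — but this is unnecessary, since the uniform argument above already covers every ${\sf GWFX}$ at once.
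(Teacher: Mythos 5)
Your proposal is correct and matches the paper's proof, which simply states that the argument of Lemma~\ref{1} carries over to ${\sf GWFX}$; your observation that the induction on $w(C)$ only ever uses rules of \gwf, all of which remain available in the extended calculi, is exactly the justification intended. The additional remark about alternative derivations via the new rules is harmless but, as you note, unnecessary.
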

\begin{proof}
The proof is similar to the proof of lemma \ref{1}.
\end{proof}

\begin{thm}
\textbf{Height-preserving weakening.} 
The left and right rules of weakening are height-preserving admissible in ${\sf  GWFX }$.
$$ \frac{\Gamma\Rightarrow \Delta}{A, \Gamma\Rightarrow \Delta}L_{w}~~~~~~~~~~~~~\frac{\Gamma\Rightarrow \Delta}{\Gamma\Rightarrow \Delta, A}R_{w}$$
\end{thm}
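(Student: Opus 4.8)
The plan is to proceed exactly as in the proof of Theorem~\ref{1b} (height-preserving weakening for \gwf), by induction on the height of the derivation of the premiss $\Gamma\Rightarrow\Delta$. The base case (axioms and $\bot_L$) is immediate, since adding a formula to either context of an axiom or a $\bot_L$-instance yields another axiom or $\bot_L$-instance of the same (zero) height. For the inductive step, if the last rule applied is one of the rules already present in \gwf ($\wedge_L$, $\wedge_R$, $\vee_L$, $\vee_R$, $\rightarrow_R$, $\rightarrow_{LR}$), the argument is verbatim that of Theorem~\ref{1b}: for the propositional rules one applies the induction hypothesis to the premiss(es) and reapplies the rule, and for $\rightarrow_R$ and $\rightarrow_{LR}$ one simply inserts the new formula $D$ (or $A$ in the statement's notation) into the weakening context $\Gamma$ appearing in the conclusion, without touching the premisses at all.

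The only genuinely new cases are the four extra rules $\rightarrow_{LR_N}$, $\rightarrow_{LR_{N_2}}$, $\rightarrow_{\widehat{\sf C}}$ and $\rightarrow_{\widehat{\sf D}}$ that may occur as rule $\sf X$. I would observe that each of these rules has a conclusion of the shape $\Gamma, (\text{impl.})\Rightarrow(\text{impl.}), \Delta$ in which $\Gamma$ and $\Delta$ are arbitrary side contexts that do not appear in any premiss: the premisses are all sequents between the immediate subformulas of the principal implications. Hence, just as for $\rightarrow_{LR}$, height-preserving weakening on the left is handled by replacing $\Gamma$ with $\Gamma, A$ in the conclusion, and on the right by replacing $\Delta$ with $\Delta, A$, leaving the premisses and the derivation height unchanged. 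This covers all four possibilities for $\sf X$ uniformly.

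I do not anticipate a real obstacle here: the essential structural feature being exploited is that in every implication-introducing rule of ${\sf GWFX}$ the contexts $\Gamma,\Delta$ are passive (they carry no active formulas and are not inspected by the rule), so weakening them is free and height-preserving. The only point requiring a line of care is making sure the case split is exhaustive — that the four rules of $\sf X$ are indeed all of the "context-passive" form — after which one simply remarks that the proof is otherwise identical to that of Theorem~\ref{1b}. Accordingly I would write the proof as: "The proof is by induction on the height of the derivation. For the rules of \gwf the argument is as in Theorem~\ref{1b}. If the last rule is $\rightarrow_{LR_N}$, $\rightarrow_{LR_{N_2}}$, $\rightarrow_{\widehat{\sf C}}$ or $\rightarrow_{\widehat{\sf D}}$, we add the weakening formula to the context $\Gamma$ (resp.\ $\Delta$) of the conclusion, since these contexts do not occur in the premisses." and then, if desired, display one representative transformation, e.g. for $\rightarrow_{\widehat{\sf C}}$,
$$\frac{A\Rightarrow B}{\Gamma, C\rightarrow A\Rightarrow C\rightarrow B, \Delta}\rightarrow_{\widehat{\sf C}}\qquad\rightsquigarrow\qquad\frac{A\Rightarrow B}{\Gamma, D, C\rightarrow A\Rightarrow C\rightarrow B, \Delta}\rightarrow_{\widehat{\sf C}}$$
and symmetrically on the right.
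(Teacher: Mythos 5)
Your proposal is correct and follows exactly the route the paper intends: the paper's own proof is just the remark that the argument is the same as for Theorem~\ref{1b}, and your observation that the side contexts $\Gamma,\Delta$ in each of the four additional rules are passive (absent from the premisses), so the weakening formula can be inserted directly into the conclusion without changing the height, is precisely the point being relied on. No gap.
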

\begin{proof}
The proof is similar to the proof of Theorem \ref{1b}.
\end{proof}

\begin{lem}\label{23a}
In ${\sf  GWFX }$ we have:
\begin{enumerate}
\item If $ \vdash_{n} A\wedge B, \Gamma \Rightarrow \Delta $, then $ \vdash_{n} A, B, \Gamma \Rightarrow \Delta $.
\item If $ \vdash_{n} A\vee B, \Gamma \Rightarrow \Delta $, then $ \vdash_{n} A, \Gamma \Rightarrow \Delta $ and $ \vdash_{n} B, \Gamma \Rightarrow \Delta $.
\item If $ \vdash_{n}  \Gamma \Rightarrow \Delta , A\wedge B $, then $ \vdash_{n}  \Gamma \Rightarrow \Delta,  A $ and $ \vdash_{n}  \Gamma \Rightarrow \Delta, B $.
\item If $ \vdash_{n}  \Gamma \Rightarrow \Delta , A\vee B $, then $ \vdash_{n}  \Gamma \Rightarrow \Delta,  A, B$.
\end{enumerate}
\end{lem}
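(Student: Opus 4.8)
The plan is to mimic the proof of Lemma~\ref{2} essentially verbatim: a straightforward induction on the height $n$ of the given derivation, carrying the four statements along simultaneously so that the induction hypothesis is available for each of them. The only feature distinguishing ${\sf GWFX}$ from \gwf is the presence of the additional implication rules $\rightarrow_{LR_{N}}$, $\rightarrow_{LR_{N_{2}}}$, $\rightarrow_{\widehat{\sf C}}$, $\rightarrow_{\widehat{\sf D}}$, so all that really has to be checked is that these rules do not interfere with the inversion of $\wedge$ and $\vee$.

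For the base case $n=0$, the premise of the inversion is an instance of $Ax$ or of $\bot_L$; since in each of the four items the distinguished formula ($A\wedge B$ or $A\vee B$, on the left or on the right) is neither atomic nor $\bot$, the $Ax$ or $\bot_L$ instance is already witnessed by the surrounding context, and hence the required conclusion (for instance $A,B,\Gamma\Rightarrow\Delta$ in item~1) is again an instance of $Ax$ or $\bot_L$, derivable at height $0$.

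For the induction step one splits, as usual, according to whether the formula to be inverted is principal in the last rule. If it is principal, this can only be because the last rule is $\wedge_L$, $\vee_L$, $\wedge_R$ or $\vee_R$ with that very formula as principal formula, and then the sequent(s) required by the inversion already occur among the premises, hence are derivable with height $\le n-1$. If the formula to be inverted is not principal, then it occurs in one of the contexts $\Gamma,\Delta$ of the last rule; one applies the induction hypothesis to the premise(s) to perform the inversion there without increasing the height, and then re-applies the same rule, obtaining the conclusion at height $\le n$. For every implication rule — both those already present in \gwf and the four new ones of ${\sf GWFX}$ — this last case is immediate: their principal formula is always an implication, hence by unique readability it cannot be the $\wedge$- or $\vee$-formula we wish to invert, while their premises contain no part of the weakening contexts $\Gamma,\Delta$; so the formula to be inverted necessarily lies in $\Gamma$ or $\Delta$ and is simply carried down unchanged as the same rule is re-applied.

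There is no real obstacle here; the only points needing (minimal) attention are the bookkeeping of derivation heights and the simultaneous treatment of the four items, exactly as in the intuitionistic case (cf.\ Theorem~2.3.5 of~\cite{Neg}).
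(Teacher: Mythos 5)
Your proof is correct and follows exactly the route the paper intends: the paper's own proof of Lemma~\ref{23a} simply defers to Lemma~\ref{2}, i.e.\ induction on the derivation height as in Theorem~2.3.5 of~\cite{Neg}, and your write-up spells out precisely the one point that needs checking, namely that the new implication rules have fixed premises disjoint from the contexts, so an inverted $\wedge$- or $\vee$-formula can only sit in $\Gamma$ or $\Delta$ and is carried through by re-applying the rule. No gaps.
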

\begin{proof}
Similar to the proof of Lemma \ref{2}. 
\end{proof}

\begin{thm}
\textbf{Height-preserving contraction.}
The left and right rules of contraction are height-preserving admissible in ${\sf  GWFX }$.
$$\frac{D, D, \Gamma\Rightarrow \Delta}{D, \Gamma\Rightarrow \Delta}L_{c}~~~~~~~~\frac{ \Gamma\Rightarrow \Delta, D, D}{\Gamma\Rightarrow \Delta, D}R_{c}$$
\end{thm}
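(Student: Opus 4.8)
The plan is to run the same induction on the height of the derivation of the premiss of $L_{c}$ (resp. $R_{c}$) that was used in the proof of Theorem~\ref{1c}, now over the calculus ${\sf  GWFX }$. Since ${\sf  GWFX }$ differs from ${\sf  GWF }$ only by the presence of the single extra rule $\sf X$ (one of $\rightarrow_{LR_{N}}$, $\rightarrow_{LR_{N_{2}}}$, $\rightarrow_{\widehat{\sf C}}$, $\rightarrow_{\widehat{\sf D}}$), every subcase in which the last rule of that derivation is an axiom, $\bot_{L}$, $\wedge_{L}$, $\wedge_{R}$, $\vee_{L}$, $\vee_{R}$, $\rightarrow_{R}$ or $\rightarrow_{LR}$ is handled verbatim as in Theorem~\ref{1c}, invoking Lemma~\ref{23a} wherever Lemma~\ref{2} was used there. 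Hence the only new work is the case in which the last inference is an instance of $\sf X$.

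For that case I would exploit the fact that in each of the four new rules the premisses are context-free: they are fixed sequents over the active formulas only, while the arbitrary multisets $\Gamma$ and $\Delta$ occur solely in the conclusion. Suppose first we contract on the left, so the premiss of $L_{c}$ has the form $D, D, \Sigma \Rightarrow \Theta$ and is the conclusion of an instance of $\sf X$ whose principal antecedent formula is an implication $I$ (namely $C\rightarrow D$ for $\rightarrow_{LR_{N}}$ and $\rightarrow_{LR_{N_{2}}}$, $C\rightarrow A$ for $\rightarrow_{\widehat{\sf C}}$, $B\rightarrow C$ for $\rightarrow_{\widehat{\sf D}}$). Because the premisses of that instance do not mention the context, neither displayed occurrence of $D$ can be an active formula: each occurrence is either the principal occurrence $I$ or lies in the weakening context. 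In either subcase the contracted sequent $D, \Sigma \Rightarrow \Theta$ is obtained by re-applying the very same instance of $\sf X$, with exactly the same premiss derivations, to the conclusion whose antecedent context has been shrunk by one copy of $D$ (if $D = I$ we keep it as the principal formula and delete the context copy; if both copies lie in the context we simply delete one). The premiss derivations are left untouched, so the height does not increase. Right contraction is symmetric: the succedent of the conclusion of each new rule is $J, \Delta$ with $J$ the principal succedent implication and $\Delta$ a pure weakening context, so the same deletion-of-one-context-copy move applies.

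The hard part is really only the observation underpinning the previous paragraph, namely that a formula occurring twice in the conclusion of an instance of $\sf X$ must be either the principal formula or a context formula, never one of the active formulas; but this is immediate from inspecting the four rule schemes, whose premisses contain no $\Gamma$ or $\Delta$. This is exactly the same phenomenon that made the $\rightarrow_{R}$ and $\rightarrow_{LR}$ subcases of Theorem~\ref{1c} go through, so no new technical machinery is needed and the proof is a routine extension of that of Theorem~\ref{1c}.
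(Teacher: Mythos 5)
Your proposal is correct and follows essentially the same route as the paper, which simply states that the proof is similar to that of Theorem~\ref{1c}: the induction on derivation height is unchanged, and the only new cases are those where the last rule is $\sf X$, which are handled exactly as the $\rightarrow_{R}$/$\rightarrow_{LR}$ cases because the premisses of each new rule are context-free, so both contracted occurrences are either principal or context formulas and one context copy can be deleted without touching the premiss derivations. Your explicit justification of why the active formulas cannot be involved is a faithful unpacking of what the paper leaves implicit.
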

\begin{proof}
The proof is similar to the proof of Theorem \ref{1c}.
\end{proof}

\subsection{The system ${\sf  GWF_{N } }$}\label{modneigh2}

If we consider $\rightarrow_{LR_{N}} $ instead of $\rightarrow_{ LR}$ in the system ${\sf  GWF }$, then the  system ${\sf  GWF_{N } }$ is defined.
We prove the admissibility of cut for ${\sf  GWF_{N }}$.

\begin{thm}
The rule of cut  is addmissible in ${\sf GWF_{N}}$.
$$ \frac{\Gamma\Rightarrow  D, \Delta~~~~~~D, \Gamma^{'}\Rightarrow \Delta^{'}}{\Gamma,\Gamma^{'} \Rightarrow \Delta, \Delta^{'}}Cut$$
\end{thm}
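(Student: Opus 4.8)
The plan is to follow the same double-induction scheme used in the proof of Theorem~\ref{1d} (cut-admissibility for \gwf): the outer induction is on the weight of the cut formula $D$, with an inner (sub)induction on the cut-height, i.e.\ the sum of the heights of the derivations of the two premises. The only difference between \gwf and ${\sf GWF_{N}}$ is that $\rightarrow_{LR}$ is replaced by $\rightarrow_{LR_{N}}$, which still has exactly one principal formula on the left and one on the right of the conclusion, and whose four premises are again sequents of the form (single formula)$\Rightarrow$(single formula). So the overall case analysis is unchanged: (i) one premise is an axiom or a conclusion of $\bot_{L}$; (ii) the cut formula is not principal in the left premise; (iii) the cut formula is principal in the left premise only; (iv) the cut formula is principal in both premises. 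Cases (i) and (ii), and the subcases of (iii), (iv) where $D$ is a conjunction or a disjunction, are literally identical to Theorem~\ref{1d} (and to the intuitionistic case in~\cite{Neg}), so I would dispatch them by reference.

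\textbf{Key new subcases.} The substantive work is where $\rightarrow_{LR_{N}}$ interacts with the cut. First, the case where $D$ is principal in the left premise only and that premise ends in $\rightarrow_{LR_{N}}$ (so $\Gamma'=C\rightarrow E,\Gamma''$, $\Delta'=A\rightarrow B,\Delta''$ in suitable notation): since the principal formula of $\rightarrow_{LR_{N}}$ is never the cut formula $D$ there, one simply permutes the cut upward past $\rightarrow_{LR_{N}}$, cutting against $D$ in the conclusion-context, exactly as in subcase 2.6 of Theorem~\ref{1d}; the cut-height strictly decreases. Dually when $D$ is principal in the right premise only. Second, and this is the heart of the matter, the case $D=A\rightarrow C$ (say) principal in \emph{both} premises, both ending in $\rightarrow_{LR_{N}}$: the left premise derives $\Gamma, E\rightarrow F\Rightarrow A\rightarrow C,\Delta$ from $A\Rightarrow E,C$; $A,F\Rightarrow C$; $E\Rightarrow A,F$; $E,C\Rightarrow F$, and the right premise derives $A\rightarrow C,\Gamma'\Rightarrow B\rightarrow G,\Delta'$ from $B\Rightarrow A,G$; $B,C\Rightarrow G$; $A\Rightarrow B,C$... wait, from $B\Rightarrow A,G$; $B,C\Rightarrow G$; $A\Rightarrow B,G$(?)—in any case from the four $\rightarrow_{LR_{N}}$-premises with $C\rightarrow D$ read as $A\rightarrow C$ and $A\rightarrow B$ read as $B\rightarrow G$. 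One must then build a derivation of $\Gamma,\Gamma', E\rightarrow F\Rightarrow B\rightarrow G,\Delta,\Delta'$ by combining the eight premise-sequents using cuts on $A$ and on $C$ — formulas of strictly smaller weight than $A\rightarrow C$ — and a final application of $\rightarrow_{LR_{N}}$ (plus height-preserving weakening, Theorem~\ref{1b}, to fix up contexts, and possibly contraction, Theorem~\ref{1c}). Because the two asymmetric premises of $\rightarrow_{LR_{N}}$ carry a side formula ($A\Rightarrow C,B$ and $A,D\Rightarrow B$ in the rule as displayed), assembling the four required premises $B\Rightarrow E,G$-type sequents will need two chained smaller-weight cuts each, as in the first transformed derivation of subcase 3.3.

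\textbf{Main obstacle.} The delicate point is verifying that in the both-principal $\rightarrow_{LR_{N}}$ case every newly introduced cut really is on a formula of smaller weight (so the outer induction applies) and that the four sequents one must hand to the final $\rightarrow_{LR_{N}}$ actually match its premise shape $A'\Rightarrow C',B'$, $A',D'\Rightarrow B'$, $C'\Rightarrow A',D'$, $C',B'\Rightarrow D'$ after the cuts and weakenings. This is a finite but fiddly bookkeeping task: one has to choose correctly which of the eight given premises to cut against which, track the extra side-formulas $B$ and $D$ (equivalently $G$) through the chained cuts, and then invoke Theorems~\ref{1b} and~\ref{1c} to normalize the contexts to $\Gamma,\Gamma',\ldots$ Once the pairing is set up correctly the rest is routine. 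I would present the both-principal $\rightarrow_{LR_{N}}$-on-both-sides transformation in full, indicate the mixed case ($\rightarrow_{R}$ on one side, $\rightarrow_{LR_{N}}$ on the other) as a simplification of it, and refer to Theorem~\ref{1d} and~\cite{Neg} for everything else.
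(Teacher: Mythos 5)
Your proposal follows the same route as the paper: reduce, via Theorem~\ref{1d}, to the new cases where $\rightarrow_{LR_{N}}$ appears in one or both premises, and in the both-principal case reassemble the four premises of a final $\rightarrow_{LR_{N}}$ by chaining two cuts on the strictly smaller subformulas of the cut formula through matching pairs of premise-sequents and then contracting the duplicated contexts --- which is exactly what the paper's explicit derivations do (each target sequent $F\Rightarrow D,G$, $F,E\Rightarrow G$, $D\Rightarrow F,E$, $D,G\Rightarrow E$ is built by two cuts followed by $R_{c}$ and $L_{c}$), with the mixed $\rightarrow_{R}$/$\rightarrow_{LR_{N}}$ cases handled as you indicate. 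One minor slip: the premises of $\rightarrow_{LR_{N}}$ are not of the form single formula $\Rightarrow$ single formula (all four carry a side formula), but your subsequent accounting of those side formulas is consistent with the paper's transformation.
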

\begin{proof}
By theorem \ref{1d}, we just need to consider the following cases.

Both premises are by rule $ \rightarrow_{LR_{N}} $. We transform

$$\frac{\frac{A\Rightarrow D, B~~~~A, E\Rightarrow B~~~~D\Rightarrow A, E~~~~D, B\Rightarrow E}{\Gamma, D\rightarrow E\Rightarrow A\rightarrow B}\rightarrow_{LR_{N}}~~~~\frac{F\Rightarrow A, G~~~~F, B\Rightarrow G~~~~A\Rightarrow F, B~~~~A, G\Rightarrow B}{A\rightarrow B\Rightarrow F\rightarrow G}\rightarrow_{LR_{N}}}{\Gamma, D\rightarrow E\Rightarrow F\rightarrow G}Cut$$

 into the following derivation (for the sake of space, we divide the following proof into several smaller derivations)

$$ \frac{\frac{\frac{\frac{F\Rightarrow A, G~A\Rightarrow D, B}{F\Rightarrow G, D, B}Cut~ \genfrac{}{}{0pt}{}{}{ F, B\Rightarrow   G}}{F, F\Rightarrow D, G, G}Cut}{F, F\Rightarrow D, G}R_{c}}{F\Rightarrow D, G}L_{c} $$

$$ \frac{\frac{\frac{\frac{F\Rightarrow A, G~A, E\Rightarrow  B}{F, E\Rightarrow G, B}Cut~ \genfrac{}{}{0pt}{}{}{ F, B\Rightarrow   G}}{F, F, E\Rightarrow G, G}Cut}{F, F, E\Rightarrow G}R_{c}}{F, E\Rightarrow  G}L_{c} $$

$$ \frac{\frac{\frac{\frac{D\Rightarrow A, E~A \Rightarrow  F, B}{D\Rightarrow F, B, E}Cut~ \genfrac{}{}{0pt}{}{}{ D, B\Rightarrow   E}}{D,D\Rightarrow F, E, E}Cut}{D, D\Rightarrow F, E}R_{c}}{D\Rightarrow  F, E}L_{c} $$

$$ \frac{\frac{\frac{\frac{D\Rightarrow A, E~A, G\Rightarrow  B}{D, G\Rightarrow E, B}Cut~ \genfrac{}{}{0pt}{}{}{ D, B\Rightarrow   E}}{D, D, G\Rightarrow E, E}Cut}{D, D, G\Rightarrow E}R_{c}}{D, G\Rightarrow  E}L_{c} $$

We now proceed to the next segment of the derivation,

$$\frac{F\Rightarrow D, G~~~~F, E\Rightarrow  G~~~~D\Rightarrow  F, E~~~~D, G\Rightarrow  E}{\Gamma, D\rightarrow E\Rightarrow F\rightarrow G}\rightarrow_{LR_{N}}  $$

Left premiss is by $ \rightarrow_{R} $, and right one by $ \rightarrow_{LR_{N}} $. We transform
 $$\frac{\frac{A\Rightarrow B}{ \Gamma \Rightarrow A\rightarrow B}\rightarrow_{R}~~~~~\frac{F\Rightarrow A, G~~~~F, B\Rightarrow G~~~~A\Rightarrow F, B~~~~A, G\Rightarrow B}{A\rightarrow B\Rightarrow F\rightarrow G}\rightarrow_{LR_{N}}}{\Gamma \Rightarrow F\rightarrow G}Cut$$

\begin{center}
into 
\end{center}

$$\frac{\frac{ \genfrac{}{}{0pt}{}{}{ F\Rightarrow A, G}~~~\frac{A\Rightarrow B~~F, B\Rightarrow G}{A, F\Rightarrow G}Cut}{\frac{F, F\Rightarrow G, G}{\frac{F, F\Rightarrow G}{F\Rightarrow G}L_{c}}R_{c}}Cut}{\Gamma\Rightarrow F\rightarrow G}\rightarrow_{R} $$

Left premiss is by $ \rightarrow_{LR_{N}} $, and right one by $ \rightarrow_{R} $. We transform
$$\frac{\frac{F\Rightarrow A, G~~~~F, B\Rightarrow G~~~~A\Rightarrow F, B~~~~A, G\Rightarrow B}{\Gamma, A\rightarrow B\Rightarrow F\rightarrow G}\rightarrow_{LR_{N}}~~~~\frac{C\Rightarrow D}{ F\rightarrow G\Rightarrow C\rightarrow D}\rightarrow_{R}}{\Gamma, A\rightarrow B\Rightarrow C\rightarrow D}Cut$$

\begin{center}
into 
\end{center}
$$ \frac{C\Rightarrow D}{\Gamma, A\rightarrow B\Rightarrow C\rightarrow D}\rightarrow_{R}$$
\end{proof}

\begin{lem}\label{45}
If $ {\sf  GWF_{N }  }\vdash_{n}\Rightarrow A\rightarrow B$, then ${\sf  GWF_{N }  } \vdash_{n} A\Rightarrow B$. 
\end{lem}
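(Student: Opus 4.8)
The plan is to prove Lemma~\ref{45} exactly as Lemma~\ref{44} was proved, namely by analysing the last rule of a derivation of $\Rightarrow A\rightarrow B$ in ${\sf GWF_N}$ and observing that there is essentially only one possibility. A sequent with empty antecedent and a single implication on the right can only be the conclusion of a right rule whose principal formula is $A\rightarrow B$, and among the rules of ${\sf GWF_N}$ the only such rule is $\rightarrow_R$ (the rule $\rightarrow_{LR_N}$, like $\rightarrow_{LR}$, always introduces a nonempty antecedent $\Gamma, C\rightarrow D$, so it cannot yield $\Rightarrow A\rightarrow B$; and $\vee_R$, $\wedge_R$ cannot have $A\rightarrow B$ as principal formula). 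Hence the derivation must end with
$$\frac{A\Rightarrow B}{\Rightarrow A\rightarrow B}\rightarrow_R,$$
and the premiss $A\Rightarrow B$ is derivable with height $n-1\le n$, which is what we want.

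First I would state that the proof is by induction on the height $n$ of the derivation of $\Rightarrow A\rightarrow B$ (in fact no genuine induction is needed, since the conclusion is read off directly from the last inference, but phrasing it as in Lemma~\ref{44} keeps the presentation uniform). Then I would note that the height of the derivation of $\Rightarrow A\rightarrow B$ cannot be $0$, since axioms and $\bot_L$ never have this shape. For the inductive step I would simply enumerate the rules that could produce a sequent of the form $\Rightarrow A\rightarrow B$: as argued above, only $\rightarrow_R$ qualifies, and its premiss is precisely $A\Rightarrow B$, derivable with strictly smaller height. This gives ${\sf GWF_N}\vdash_n A\Rightarrow B$.

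There is essentially no obstacle here: the lemma is the ${\sf GWF_N}$-analogue of Lemma~\ref{44}, and the only thing one must check carefully is that the new rule $\rightarrow_{LR_N}$ does not provide an alternative way of deriving $\Rightarrow A\rightarrow B$ — but this is immediate from the shape of its conclusion $\Gamma, C\rightarrow D\Rightarrow A\rightarrow B,\Delta$, whose antecedent always contains the implication $C\rightarrow D$ and so is never empty. Thus I expect the proof to be a single short paragraph, referring back to the proof of Lemma~\ref{44}, with the added remark that $\rightarrow_{LR_N}$ is excluded by inspection of its conclusion.

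\begin{proof}
The proof is by induction on the height of the derivation of the premiss, exactly as in Lemma~\ref{44}. A derivation of $\Rightarrow A\rightarrow B$ cannot have height $0$. For the inductive step, the only rule of ${\sf GWF_N}$ whose conclusion can have the form $\Rightarrow A\rightarrow B$ is $\rightarrow_R$: indeed $\wedge_R$ and $\vee_R$ cannot have $A\rightarrow B$ as principal formula, and the conclusion of $\rightarrow_{LR_N}$, being of the form $\Gamma, C\rightarrow D\Rightarrow A\rightarrow B, \Delta$, always has a nonempty antecedent. Hence $\Rightarrow A\rightarrow B$ is derived by $\rightarrow_R$ from the premiss $A\Rightarrow B$, which is therefore derivable with height at most $n$.
\end{proof}
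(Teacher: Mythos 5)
Your proposal is correct and matches the paper's approach: the paper proves this lemma by the same case analysis as Lemma~\ref{44}, observing that $\rightarrow_R$ is the only rule that can conclude $\Rightarrow A\rightarrow B$. Your added check that $\rightarrow_{LR_N}$ is excluded because its conclusion always has a nonempty antecedent is exactly the point the paper leaves implicit.
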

\begin{proof}
Similar to the proof of Lemma \ref{44}.
\end{proof}

Finally, we establish the equivalence between the Hilbert-style system $ {\sf  WF_{N }  } $ and the corresponding sequent calculus, i.e. ${\sf  GWF_{N }  }$.

\begin{thm} \label{3b}
Derivability in the sequent system ${\sf  GWF_{N }  }$ and in the Hilbert-style  system ${\sf  WF_{N }  }$ are
equivalent, i.e.
$${\sf  GWF_{N} }  \vdash \Gamma \Rightarrow \Delta ~~~~\mbox{iff}~~~~\vdash_{\sf  WF_{N }  } \bigwedge \Gamma \rightarrow \bigvee \Delta $$
\end{thm}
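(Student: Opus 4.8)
The plan is to mirror the structure of the proof of Theorem~\ref{3}, carrying out an induction in each direction, and to handle only the new ingredient introduced by passing from $\rightarrow_{LR}$ to $\rightarrow_{LR_{N}}$ on the sequent side and from the rule $\sf N$ to the presence of rule $\sf N$ on the Hilbert side. For the \textbf{right-to-left} direction I would argue by induction on the height of the axiomatic derivation in ${\sf WF_{N}}$. Since ${\sf WF_{N}}$ is ${\sf WF}$ plus the rule $\sf N$, all the base cases (the axioms) and all the inductive steps for rules $5,6,9$--$14$ are handled exactly as in Theorem~\ref{3} (every ${\sf GWF}$-derivation is also a ${\sf GWF_{N}}$-derivation). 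The only genuinely new case is when the last step is an application of rule $\sf N$: from premises $\vdash_{{\sf WF}_{N}} A\rightarrow B\vee C$, $\vdash_{{\sf WF}_{N}} C\rightarrow A\vee D$, $\vdash_{{\sf WF}_{N}} A\wedge D\rightarrow B$, $\vdash_{{\sf WF}_{N}} C\wedge B\rightarrow D$ we conclude $(A\rightarrow B)\leftrightarrow(C\rightarrow D)$. By the induction hypothesis and Lemma~\ref{45} (the inversion lemma for ${\sf GWF_{N}}$, analogous to Lemma~\ref{44}) together with Lemma~\ref{23a}, I obtain the four sequents $A\Rightarrow B\vee C$, etc., and from these by $\wedge_L$, $\vee_R$, $\wedge_R$ one recovers precisely the four premises $A\Rightarrow C,B$; $A,D\Rightarrow B$; $C\Rightarrow A,D$; $C,B\Rightarrow D$ that feed $\rightarrow_{LR_{N}}$. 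Applying $\rightarrow_{LR_{N}}$ twice (once in each direction, swapping the roles of the formulas) and then $\rightarrow_R$ and $\wedge_R$ produces $\Rightarrow (A\rightarrow B)\leftrightarrow(C\rightarrow D)$, as required; cut is available here by the cut-admissibility theorem just proved for ${\sf GWF_{N}}$.

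For the \textbf{left-to-right} direction I would again induct on the height of the ${\sf GWF_{N}}$-derivation. All rules common to ${\sf GWF}$ are treated verbatim as in Theorem~\ref{3}, so the only new case is when the last rule is $\rightarrow_{LR_{N}}$, concluding $\Gamma, C\rightarrow D\Rightarrow A\rightarrow B,\Delta$ from $A\Rightarrow C,B$, $A,D\Rightarrow B$, $C\Rightarrow A,D$, $C,B\Rightarrow D$. By the induction hypothesis these give, in ${\sf WF_{N}}$, the theorems $\vdash A\rightarrow C\vee B$, $\vdash A\wedge D\rightarrow B$, $\vdash C\rightarrow A\vee D$, $\vdash C\wedge B\rightarrow D$. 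These are exactly the four premises of rule $\sf N$, so rule $\sf N$ yields $\vdash_{{\sf WF}_{N}} (A\rightarrow C)\leftrightarrow(C\rightarrow D)$ — wait, more precisely $\vdash_{{\sf WF}_{N}} (A\rightarrow B)\leftrightarrow(C\rightarrow D)$ — hence in particular $\vdash_{{\sf WF}_{N}} (C\rightarrow D)\rightarrow(A\rightarrow B)$. Then, exactly as in the $\rightarrow_{LR}$ case of Theorem~\ref{3}, from $\vdash \Gamma\wedge(C\rightarrow D)\rightarrow (C\rightarrow D)$ and $\vdash (A\rightarrow B)\rightarrow (A\rightarrow B)\vee\Delta$ and transitivity (rule $9$) one assembles $\vdash_{{\sf WF}_{N}} \bigwedge\Gamma\wedge(C\rightarrow D)\rightarrow (A\rightarrow B)\vee\bigvee\Delta$, which is the claim.

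The main obstacle, such as it is, is bookkeeping rather than conceptual: one must check that the side-to-side translation between the four two-sided sequent premises of $\rightarrow_{LR_{N}}$ and the four implicational premises of rule $\sf N$ is exact, including the correct pairing of which formula plays the role of $A,B,C,D$ in each, and that the orientation is right so that rule $\sf N$ (which is a biconditional rule) delivers the one-directional implication actually needed. I would state the inversion Lemma~\ref{45} for ${\sf GWF_{N}}$ explicitly (its proof is identical to Lemma~\ref{44}, since $\Rightarrow A\rightarrow B$ can still only be obtained by $\rightarrow_R$), and I would note once and for all that cut is used freely in the right-to-left direction on the strength of the cut-admissibility theorem for ${\sf GWF_{N}}$. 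With those two observations in place, the proof is a routine adaptation of Theorem~\ref{3}, and I would simply write: ``the proof is analogous to that of Theorem~\ref{3}, the only new cases being rule $\sf N$ (right-to-left) and $\rightarrow_{LR_{N}}$ (left-to-right), which correspond to each other as above.''
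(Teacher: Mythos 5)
Your proposal is correct and follows essentially the same route as the paper: both directions reduce to Theorem~\ref{3}, the left-to-right case of $\rightarrow_{LR_{N}}$ is discharged by feeding the four induction hypotheses into rule $\sf N$ and then composing with projection and transitivity, and the right-to-left case of rule $\sf N$ is discharged via Lemmas~\ref{23a} and~\ref{45} followed by two applications of $\rightarrow_{LR_{N}}$, $\rightarrow_{R}$ and $\wedge_{R}$. The pairing of the four sequent premises with the four premises of rule $\sf N$ that you flag as the main bookkeeping concern is exactly the correspondence the paper verifies.
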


\begin{proof}
\textbf{Left-to-right:} By Theorem \ref{3}, we just need to assume the last derivation is by the rule $ \rightarrow_{LR_{N}} $. 
 In this case we have ${\sf  GWF_{N} }  \vdash_{n} \Gamma, C\rightarrow D \Rightarrow A\rightarrow B, \Delta$. We need to show that  $\vdash_{\sf WF_N}   \Gamma \bigwedge (C\rightarrow D)  \rightarrow (A\rightarrow B)\bigvee \Delta $. By assumption we have $\vdash_{n-1} A \Rightarrow C, B  $, $\vdash_{n-1} A , D\Rightarrow B  $, $\vdash_{n-1} C \Rightarrow A, D $ and  $\vdash_{n-1} C, B \Rightarrow D  $. By the induction hypothesis we conclude that  $\vdash_{\sf WF_N} A\rightarrow C\vee B  $, $\vdash_{\sf WF_N} A\wedge D\rightarrow  B  $, $\vdash_{\sf WF_N} C\rightarrow A\vee D$ and  $\vdash_{\sf WF_N} C\wedge B\rightarrow D$. Using these results, we have the following proof in the Hilbert-style proof system for $  {\sf WF_N}$:
\begin{enumerate}
\item  $\vdash_{\sf WF_N} A\rightarrow C\vee B  $~~~~~~~~~~~~~~~~by IH
\item $\vdash_{\sf WF_N} A\wedge D\rightarrow  B  $~~~~~~~~~~~~~~~~by IH
\item $\vdash_{\sf WF_N} C\rightarrow A\vee D$~~~~~~~~~~~~~~~~by IH
\item  $\vdash_{\sf WF_N} C\wedge B\rightarrow D$~~~~~~~~~~~~~~~~by IH
\item $ \vdash_{\sf WF_N} (C\rightarrow D)\rightarrow (A\rightarrow B) $~~~~~~~~~~by 1, 2, 3, 4 and rule N
\item $\vdash_{\sf WF_N} \Gamma\bigwedge  (C\rightarrow D) \rightarrow (C\rightarrow D)  $ ~~~~by axiom 4
\item $\vdash_{\sf WF_N} \Gamma\bigwedge (C\rightarrow D) \rightarrow (A\rightarrow B)  $~~~~ by 5, 6 and rule 9
\item $\vdash_{\sf WF_N} (A\rightarrow B)\rightarrow (A\rightarrow B)\bigvee\Delta  $~~~~~by axiom 1
\item $\vdash_{\sf WF_N} \Gamma\bigwedge(C\rightarrow D) \rightarrow (A\rightarrow B)\bigvee\Delta  $ ~~~~by 7, 8 and rule 9
\end{enumerate}

\textbf{Right-to-left:} By Theorem \ref{3}, we just need to assume the last step is by the rule $ N $, then $ \Gamma =\emptyset $ and $ \Delta $ is $ (A\rightarrow B)\leftrightarrow (C\rightarrow D) $. We know that we have derived $ (A\rightarrow B)\leftrightarrow (C\rightarrow D) $ from $ A\rightarrow B\vee C$, $ C\rightarrow A\vee D $, $ A\wedge D\rightarrow B $ and $ C\wedge B\rightarrow D $. Thus we assume, by induction hypothesis, that $ {\sf  GWF_{N }  }  \vdash \Rightarrow  A\rightarrow B\vee C $,
$ {\sf  GWF_{N }  } \vdash \Rightarrow  C\rightarrow A\vee D $, ${\sf  GWF_{N }  }  \vdash \Rightarrow    A\wedge D\rightarrow B $ and $ {\sf  GWF_{N }  }  \vdash \Rightarrow  C\wedge B\rightarrow D $. By Lemmas \ref{23a} and \ref{45}, we obtain that $ {\sf  GWF_{N }  }  \vdash  A\Rightarrow  B, C $,
$ {\sf  GWF_{N }  } \vdash  C\Rightarrow  A, D $, ${\sf  GWF_{N }  }  \vdash A, D\Rightarrow B $ and $ {\sf  GWF_{N }  }  \vdash   C, B\Rightarrow D $. We can thus proceed as follows (for the sake of space, we divide the following proof into several smaller derivations):

$$ \frac{\frac{\frac{•}{C\Rightarrow A, D}\!I\!H+\ref{23a}+\ref{45}~~\frac{•}{C, B\Rightarrow D}\!I\!H+\ref{23a}+\ref{45}~~\frac{•}{A\Rightarrow C, B}\!I\!H+\ref{23a}+\ref{45}~~\frac{•}{A, D\Rightarrow B}\!I\!H+\ref{23a}+\ref{45}}{ A\rightarrow B\Rightarrow C\rightarrow D}\!\rightarrow_{LR_{N}}}{\Rightarrow(A\rightarrow B)\rightarrow (C\rightarrow D)}\!\rightarrow_{R} $$

$$ \frac{\frac{\frac{•}{A\Rightarrow C, B}\!I\!H+\ref{23a}+\ref{45}~~\frac{•}{A, D\Rightarrow B}\!I\!H+\ref{23a}+\ref{45}~~\frac{•}{C\Rightarrow A, D}\!I\!H+\ref{23a}+\ref{45}~~\frac{•}{C, B\Rightarrow D}\!I\!H+\ref{23a}+\ref{45}}{ C\rightarrow D\Rightarrow A\rightarrow B}\!\rightarrow_{LR_{N}} }{\Rightarrow(C\rightarrow D)\rightarrow (A\rightarrow B)}\!\rightarrow_{R} $$

We now proceed to the next segment of the derivation,

$$ \frac{\Rightarrow(A\rightarrow B)\rightarrow (C\rightarrow D)~~~~~\Rightarrow(C\rightarrow D)\rightarrow (A\rightarrow B)}{\Rightarrow (A\rightarrow B)\leftrightarrow (C\rightarrow D)}\!\wedge_{R}$$

\end{proof}
By combining Theorem \ref{4b} and Theorem  \ref{3b} we have the following result.
\begin{cor}
The calculus ${\sf  GWF_{N }  }$ is sound and complete with respect to the class of $ N\!B $-neighbourhood frames that are closed under equivalence.
\end{cor}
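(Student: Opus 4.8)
The plan is to prove the equivalence $\gwf\vdash\Gamma\Rightarrow\Delta$ iff $\vdash_{\sf WF_N}\bigwedge\Gamma\to\bigvee\Delta$ by induction on the height of the relevant derivation in each direction, leaning heavily on Theorem~\ref{3}, which already covers all the rules common to $\gwf$ and ${\sf GWF_N}$. In effect I only need to handle the \emph{new} pieces: on the left-to-right side, the case where the last sequent rule is $\rightarrow_{LR_N}$; on the right-to-left side, the case where the last Hilbert-style step is the rule ${\sf N}$. Everything else is verbatim from the proof of Theorem~\ref{3}, so I will just say so.

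For the left-to-right direction, suppose the derivation ends with $\rightarrow_{LR_N}$, concluding $\Gamma, C\rightarrow D\Rightarrow A\rightarrow B, \Delta$ from the four premises $A\Rightarrow C,B$; $A,D\Rightarrow B$; $C\Rightarrow A,D$; $C,B\Rightarrow D$. By the induction hypothesis these translate to $\vdash_{\sf WF_N} A\to C\vee B$, $\vdash_{\sf WF_N} A\wedge D\to B$, $\vdash_{\sf WF_N} C\to A\vee D$, $\vdash_{\sf WF_N} C\wedge B\to D$. Feeding these four theorems into rule ${\sf N}$ yields $\vdash_{\sf WF_N}(C\to D)\to(A\to B)$ (note the matching of the schematic letters: ${\sf N}$ has premises $A\to B\vee C$, $C\to A\vee D$, $A\wedge D\to B$, $C\wedge B\to D$ and conclusion $(A\to B)\leftrightarrow(C\to D)$, so after the appropriate renaming one direction of the biconditional is exactly what is needed). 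Then I chain with $\vdash_{\sf WF_N}\Gamma\wedge(C\to D)\to(C\to D)$ (axiom~4 / conjunction elimination iterated) and $\vdash_{\sf WF_N}(A\to B)\to(A\to B)\vee\Delta$ (axiom~1 / disjunction introduction iterated), using rule~9 (transitivity of $\to$) twice, to arrive at $\vdash_{\sf WF_N}\Gamma\wedge(C\to D)\to(A\to B)\vee\Delta$. I will present this as the short numbered Hilbert derivation, exactly parallel to the $\rightarrow_{LR}$ case in Theorem~\ref{3} but using ${\sf N}$ in place of rule~13.

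For the right-to-left direction, the only new inductive step is when the last Hilbert rule is ${\sf N}$, so $\Gamma=\emptyset$ and $\Delta$ is $(A\to B)\leftrightarrow(C\to D)$, derived from $A\to B\vee C$, $C\to A\vee D$, $A\wedge D\to B$, $C\wedge B\to D$. By the induction hypothesis $\gwf$-style derivability gives $\Rightarrow A\to B\vee C$, etc.; applying Lemma~\ref{23a} (inversion for $\wedge,\vee$) and Lemma~\ref{45} (inversion for $\to_R$) I strip these down to the four two-sided sequents $A\Rightarrow B,C$; $C\Rightarrow A,D$; $A,D\Rightarrow B$; $C,B\Rightarrow D$. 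Then a single application of $\rightarrow_{LR_N}$ (with the letters arranged one way) gives $A\to B\Rightarrow C\to D$, and another application (letters arranged the other way) gives $C\to D\Rightarrow A\to B$; prefixing each with $\rightarrow_R$ and combining with $\wedge_R$ yields $\Rightarrow(A\to B)\leftrightarrow(C\to D)$. I will draw these as three small proof figures, mirroring the rule~13 case in Theorem~\ref{3}. As there, rule~9 on the Hilbert side is simulated in ${\sf GWF_N}$ via cut, which is legitimate because cut-admissibility for ${\sf GWF_N}$ has already been established.

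The main obstacle is purely bookkeeping: getting the substitution of schematic variables in rule~${\sf N}$ and in $\rightarrow_{LR_N}$ to line up correctly in each direction (the premises are stated in a particular order and with a particular pairing of $A,B,C,D$, and one must be careful that $C\vee B$ versus $B\vee C$ and similar reorderings are handled, which is harmless since $\vee$-commutativity is derivable in $\wf$, or one simply chooses the matching orientation of the rule). There is no genuinely new idea beyond Theorem~\ref{3}; the soundness-and-completeness corollary then follows immediately by combining this equivalence with Theorem~\ref{4b} specialised to $\Gamma=\{{\sf N}\}$, i.e. the class of $N\!B$-frames closed under equivalence.
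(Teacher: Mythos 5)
Your proposal is correct and follows essentially the same route as the paper: the corollary is obtained by establishing the equivalence ${\sf GWF_{N}} \vdash \Gamma \Rightarrow \Delta$ iff $\vdash_{{\sf WF_{N}}} \bigwedge\Gamma \rightarrow \bigvee\Delta$ (Theorem~\ref{3b}), reducing to Theorem~\ref{3} for the shared rules and handling only the $\rightarrow_{LR_{N}}$ case via rule ${\sf N}$ on one side and the rule-${\sf N}$ case via Lemmas~\ref{23a} and~\ref{45} plus $\rightarrow_{LR_{N}}$ on the other, then combining with Theorem~\ref{4b}. Your extra care about the ordering of disjuncts in the premises of ${\sf N}$ is a harmless refinement of the same argument.
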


\subsection{ The system ${\sf  GWF_{N_{2} } }$}\label{modneigh3}

If we consider $\rightarrow_{LR_{N_{2}}}$ instead of $\rightarrow_{ LR}$ in the system ${\sf  GWF_{N} }$, then the  system ${\sf  GWF_{N_{2} } }$ is defined.

First we prove the admissibility of cut for ${\sf  GWF_{N_{2} }}$.
\begin{thm}
The rule of cut  is addmissible in ${\sf GWF_{N_{2}}}$.
$$ \frac{\Gamma\Rightarrow  D, \Delta~~~~~~D, \Gamma^{'}\Rightarrow \Delta^{'}}{\Gamma,\Gamma^{'} \Rightarrow \Delta, \Delta^{'}}Cut$$
\end{thm}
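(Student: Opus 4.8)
The plan is to mirror the proof of Theorem \ref{1d} (cut-admissibility for \gwf), arguing by induction on the weight of the cut formula $D$ with a sub-induction on the cut-height. By Theorem \ref{1d} itself, all cases not involving the new rule $\rightarrow_{LR_{N_2}}$ as the last inference in one of the premises are already handled; and by the cut-admissibility proof for ${\sf GWF_N}$, the cases where $\rightarrow_{LR_N}$ (or its mixture with $\rightarrow_R$) is principal are dispatched by essentially the same transformations, since $\rightarrow_{LR_{N_2}}$ has strictly fewer premises than $\rightarrow_{LR_N}$ — in fact the $N_2$-transformations are obtained from the $N$-ones by simply discarding the two unused premises. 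So the genuinely new work is: (i) both premises of the cut end with $\rightarrow_{LR_{N_2}}$ and the cut formula $D=A\rightarrow B$ is principal in both; (ii) the left premise ends with $\rightarrow_R$ and the right with $\rightarrow_{LR_{N_2}}$, with $D=A\rightarrow B$ principal in both; (iii) the left premise ends with $\rightarrow_{LR_{N_2}}$ and the right with $\rightarrow_R$; and (iv) the cases where $D$ is not principal in one premise because that premise ends with $\rightarrow_{LR_{N_2}}$ — but here $\rightarrow_{LR_{N_2}}$ introduces its principal formula on both sides, so as with $\rightarrow_{LR}$ one just permutes the cut upward into the contexts, reducing cut-height.

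For case (i), suppose the derivation is
$$\frac{\dfrac{C\Rightarrow A, E \quad C, B\Rightarrow E}{\Gamma, C\rightarrow E\Rightarrow A\rightarrow B}\rightarrow_{LR_{N_2}} \quad \dfrac{A\Rightarrow F, G \quad A, G\Rightarrow F'}{A\rightarrow B\Rightarrow F\rightarrow G}\rightarrow_{LR_{N_2}}}{\Gamma, C\rightarrow E\Rightarrow F\rightarrow G}Cut$$
(with the appropriate active premises of the second $\rightarrow_{LR_{N_2}}$: $A\Rightarrow F, B$ and $A, G\Rightarrow B$). I would build the four premises needed to conclude $\Gamma, C\rightarrow E\Rightarrow F\rightarrow G$ by $\rightarrow_{LR_{N_2}}$, namely $C\Rightarrow F, E$ and $C, G\Rightarrow E$. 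Each of these is obtained by cutting the new active premises against the old ones on the formula $A$ or $B$ — cuts on $A$ and $B$, which have smaller weight than $D=A\rightarrow B$ — followed by height-preserving contraction (Theorem on contraction for ${\sf GWFX}$) to eliminate duplicated context formulas, exactly as in the ${\sf GWF_N}$ proof but with the union/intersection-style premises deleted. This is the step I expect to be the main obstacle, since one must check carefully that the two surviving $N_2$-premises can indeed be assembled from cuts of strictly lower weight (on $A$ and on $B$) — the asymmetry of $\rightarrow_{LR_{N_2}}$ (only two premises, and they are $A\Rightarrow C,B$ and $A, D\Rightarrow B$, not a symmetric pair) means the bookkeeping differs slightly from the $N$ case and must be done explicitly.

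For case (ii), when the left premise is $\dfrac{A\Rightarrow B}{\Gamma\Rightarrow A\rightarrow B}\rightarrow_R$ and the right is $\rightarrow_{LR_{N_2}}$ with principal $A\rightarrow B$ and active premises $F\Rightarrow A, G$ and $A, G\Rightarrow B$ (reading the rule with $C\rightarrow D := A\rightarrow B$), I would cut $A\Rightarrow B$ against $A, G\Rightarrow B$... — more directly, combine $A\Rightarrow B$ with the $N_2$-premises on the smaller formula $A$, then apply $\rightarrow_R$ to conclude $\Gamma\Rightarrow F\rightarrow G$, using weakening to supply $\Gamma$; contraction again removes duplicates. For case (iii), where the right premise is $\dfrac{C\Rightarrow D}{F\rightarrow G\Rightarrow C\rightarrow D}\rightarrow_R$, the cut formula is not even needed: $C\rightarrow D$ is derivable from $C\Rightarrow D$ by $\rightarrow_R$ directly, so the conclusion $\Gamma, A\rightarrow B\Rightarrow C\rightarrow D$ follows from $C\Rightarrow D$ by $\rightarrow_R$ plus weakening, discarding the cut entirely — just as in the corresponding case of the ${\sf GWF_N}$ proof.

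Finally, for the non-principal cases (iv): if the last rule in either premise is $\rightarrow_{LR_{N_2}}$ but $D$ is not its principal formula, then $D$ sits in a context ($\Gamma$ or $\Delta$) of that rule instance, and since $\rightarrow_{LR_{N_2}}$ leaves its side-contexts untouched in its premises, we permute the cut upward — cut $D$ against the single relevant premise of $\rightarrow_{LR_{N_2}}$, obtaining a cut of strictly smaller cut-height, then reapply $\rightarrow_{LR_{N_2}}$. This is routine and parallels the treatment of $\rightarrow_{LR}$ in Theorem \ref{1d}. Assembling all these subcases with the inductive hypotheses (weight of cut formula decreasing in the principal cases, cut-height decreasing in the non-principal and mixed cases) completes the argument.
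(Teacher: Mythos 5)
Your proposal follows essentially the same route as the paper's proof: all cases not involving $\rightarrow_{LR_{N_{2}}}$ are delegated to Theorem \ref{1d}; the two principal--principal cases ($\rightarrow_{LR_{N_{2}}}$ against $\rightarrow_{LR_{N_{2}}}$, and $\rightarrow_{R}$ against $\rightarrow_{LR_{N_{2}}}$) are resolved by cutting the active premises against each other on the lighter formulas $A$ and $B$, then applying height-preserving contraction and reapplying the rule; and the $\rightarrow_{LR_{N_{2}}}$-versus-$\rightarrow_{R}$ case discards the cut outright. One caveat: you have consistently transposed the active premises of $\rightarrow_{LR_{N_{2}}}$ --- as defined, the rule derives $\Gamma, C\rightarrow D\Rightarrow A\rightarrow B, \Delta$ from $A\Rightarrow C, B$ and $A, D\Rightarrow B$, so the premises concern the components of the \emph{right-hand} principal formula, and the sequents you name (e.g.\ $C\Rightarrow F, E$ and $C, G\Rightarrow E$) should read $F\Rightarrow C, G$ and $F, E\Rightarrow G$; with that correction the lower-weight cuts assemble exactly as you describe and as in the paper.
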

\begin{proof}
By theorem \ref{1d}, we just need to consider the following cases.
Both premises are by rule $ \rightarrow_{LR_{N_{2}}} $. We transform
$$\frac{\frac{A\Rightarrow D, B~~~~A, E\Rightarrow B}{\Gamma, D\rightarrow E\Rightarrow A\rightarrow B}\rightarrow_{LR_{N_{2}}}~~~~\frac{F\Rightarrow A, G~~~~F, B\Rightarrow G}{A\rightarrow B\Rightarrow F\rightarrow G}\rightarrow_{LR_{N_{2}}}}{\Gamma, D\rightarrow E\Rightarrow F\rightarrow G}Cut$$
\begin{center}
into 
\end{center}
$$ \frac{\frac{\frac{\frac{\frac{F\Rightarrow A, G~A\Rightarrow D, B}{F\Rightarrow G, D, B}Cut~ \genfrac{}{}{0pt}{}{}{ F, B\Rightarrow   G}}{F, F\Rightarrow D, G, G}Cut}{F, F\Rightarrow D, G}R_{c}}{F\Rightarrow D, G}L_{c}~~~~\frac{\frac{\frac{\frac{F\Rightarrow A, G~A, E\Rightarrow  B}{F, E\Rightarrow G, B}Cut~ \genfrac{}{}{0pt}{}{}{ F, B\Rightarrow   G}}{F, F, E\Rightarrow G, G}Cut}{F, F, E\Rightarrow G}R_{c}}{F, E\Rightarrow  G}L_{c}}{\Gamma, D\rightarrow E\Rightarrow F\rightarrow G}\rightarrow_{LR_{N}}$$
Left premiss is by $ \rightarrow_{R} $, and right one by $ \rightarrow_{LR_{N_{2}}} $. We transform
 $$\frac{\frac{A\Rightarrow B}{ \Gamma \Rightarrow A\rightarrow B}\rightarrow_{R}~~~~~\frac{D\Rightarrow F, A~~~~ B, D\Rightarrow F}{A\rightarrow B\Rightarrow D\rightarrow F}\rightarrow_{LR_{N_{2}}}}{\Gamma \Rightarrow D\rightarrow F}Cut$$
\begin{center}
into 
\end{center}
$$\frac{\frac{ \genfrac{}{}{0pt}{}{}{D\Rightarrow F, A}~~~\frac{A\Rightarrow B~~B, D\Rightarrow F}{A, D\Rightarrow F}Cut}{\frac{D, D\Rightarrow F, F}{\frac{D, D\Rightarrow D}{D\Rightarrow D}L_{c}}R_{c}}Cut}{\Gamma\Rightarrow D\rightarrow F}\rightarrow_{R} $$
Left premiss is by $ \rightarrow_{LR_{N_{2}}} $, and right one by $ \rightarrow_{R} $. We transform
$$\frac{\frac{F\Rightarrow A, G~~~~F, B\Rightarrow G}{\Gamma, A\rightarrow B\Rightarrow F\rightarrow G}\rightarrow_{LR_{N_{2}}}~~~~\frac{C\Rightarrow D}{ F\rightarrow G\Rightarrow C\rightarrow D}\rightarrow_{R}}{\Gamma, A\rightarrow B\Rightarrow C\rightarrow D}Cut$$
\begin{center}
into 
\end{center}
$$ \frac{C\Rightarrow D}{\Gamma, A\rightarrow B\Rightarrow C\rightarrow D}\rightarrow_{R}$$
\end{proof}

\begin{lem}
If $ {\sf  GWF_{N_{2} }  }\vdash_{n}\Rightarrow A\rightarrow B$, then ${\sf  GWF_{N_{2} }  } \vdash_{n} A\Rightarrow B$. 
\end{lem}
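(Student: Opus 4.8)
The plan is to argue by induction on the height $n$ of the given derivation of $\Rightarrow A\rightarrow B$ in ${\sf GWF_{N_{2}}}$, exactly as in the proofs of Lemmas \ref{44} and \ref{45}. As in those lemmas, I expect the induction to collapse immediately to a single case analysis on the last rule of the derivation, so no genuine inductive descent is needed.

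First I would note that the endsequent $\Rightarrow A\rightarrow B$ has an empty antecedent and a succedent consisting of the single formula $A\rightarrow B$. I then run through the rules of ${\sf GWF_{N_{2}}}$ and rule out all but one: the axiom $Ax$ and $\bot_{L}$ require an atom, respectively $\bot$, in the antecedent; $\wedge_{L}$ and $\vee_{L}$ require a conjunction, respectively a disjunction, in the antecedent; $\wedge_{R}$ and $\vee_{R}$ would force the principal formula $A\rightarrow B$ to be a conjunction, respectively a disjunction; and $\rightarrow_{LR_{N_{2}}}$ always introduces a formula of the form $C\rightarrow D$ into the antecedent. Since the antecedent here is empty and the succedent formula is an implication, none of these can be the last rule, so the only possibility is $\rightarrow_{R}$, applied with both side-contexts empty. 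Its premise is then precisely $A\Rightarrow B$, derived with height $n-1\leq n$, whence ${\sf GWF_{N_{2}}}\vdash_{n} A\Rightarrow B$, as required.

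The only point that needs any care — and it is the closest thing to an obstacle — is verifying that the new rule $\rightarrow_{LR_{N_{2}}}$ of this calculus does not create an additional case relative to Lemma \ref{44}: but because its conclusion always carries $C\rightarrow D$ on the left, it is incompatible with the empty antecedent of $\Rightarrow A\rightarrow B$, so the case analysis is in fact identical to the one for ${\sf GWF}$. Hence the proof can simply be stated as "similar to the proof of Lemma \ref{44}", with this observation about $\rightarrow_{LR_{N_{2}}}$ recorded for completeness.
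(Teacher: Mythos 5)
Your proposal is correct and matches the paper's argument, which simply states that the proof is similar to that of Lemma \ref{44}: the only rule that can derive a sequent with empty antecedent and a single implication in the succedent is $\rightarrow_{R}$, whose premise is exactly $A\Rightarrow B$ at smaller height. Your explicit check that $\rightarrow_{LR_{N_{2}}}$ is excluded because its conclusion always carries $C\rightarrow D$ in the antecedent is precisely the detail the paper leaves implicit.
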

\begin{proof}
Similar to the proof of Lemma \ref{44}.
\end{proof}

Now we establish the equivalence between the Hilbert-style system $ {\sf  WF_{N_{2} }  } $ and the corresponding sequent calculus, i.e. ${\sf  GWF_{N_{2} }  }$.

\begin{thm} \label{3c2}
Derivability in the sequent system ${\sf  GWF_{N_{2} }  }$ and in the Hilbert-style  system ${\sf  WF_{N_{2} }  }$ are
equivalent, i.e.
$${\sf  GWF_{N_{2} }  }  \vdash \Gamma \Rightarrow \Delta ~~~~\mbox{iff}~~~~\vdash_{\sf  WF_{N_{2} }  } \bigwedge \Gamma \rightarrow \bigvee \Delta $$
\end{thm}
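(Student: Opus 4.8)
The plan is to mirror the proof of Theorem~\ref{3b} (and of Theorem~\ref{3} before it) exactly, isolating the single genuinely new case in each direction: the clause for the axiom~${\sf N_2}$ on the Hilbert side and for the rule $\rightarrow_{LR_{N_2}}$ on the sequent side. All other cases are handled verbatim by appeal to Theorem~\ref{3} and Theorem~\ref{3b}, since ${\sf GWF_{N_2}}$ is obtained from ${\sf GWF_N}$ by replacing $\rightarrow_{LR_N}$ with the weaker $\rightarrow_{LR_{N_2}}$, and ${\sf WF_{N_2}}$ from ${\sf WF_N}$ by replacing the rule ${\sf N}$ with the axiom ${\sf N_2}$.

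For the \textbf{left-to-right} direction I would argue by induction on the height of the derivation in ${\sf GWF_{N_2}}$, and by Theorem~\ref{3} the only new last rule to treat is $\rightarrow_{LR_{N_2}}$. So suppose ${\sf GWF_{N_2}} \vdash_{n} \Gamma, C\rightarrow D \Rightarrow A\rightarrow B, \Delta$ with this rule applied last; then $\vdash_{n-1} A \Rightarrow C, B$ and $\vdash_{n-1} A, D \Rightarrow B$. By the induction hypothesis, $\vdash_{\sf WF_{N_2}} A \rightarrow C\vee B$ and $\vdash_{\sf WF_{N_2}} A\wedge D \rightarrow B$. Now a Hilbert derivation in ${\sf WF_{N_2}}$: apply axiom ${\sf N_2}$ (with the role of its $C, A, D, B$ matched to the present $A, C, B, D$ as dictated by the shape of $\rightarrow_{LR_{N_2}}$) to obtain $\vdash_{\sf WF_{N_2}} (C\rightarrow D)\rightarrow(A\rightarrow B)$; then use $\vdash_{\sf WF_{N_2}} \bigwedge\Gamma\wedge(C\rightarrow D)\rightarrow(C\rightarrow D)$ (axiom~4 plus rule~10/12 to assemble the conjunction) and rule~9 to get $\vdash_{\sf WF_{N_2}} \bigwedge\Gamma\wedge(C\rightarrow D)\rightarrow(A\rightarrow B)$; finally compose with $\vdash_{\sf WF_{N_2}} (A\rightarrow B)\rightarrow(A\rightarrow B)\vee\bigvee\Delta$ (axiom~1) via rule~9. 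This yields $\vdash_{\sf WF_{N_2}} \bigwedge\Gamma\wedge(C\rightarrow D)\rightarrow(A\rightarrow B)\vee\bigvee\Delta$, as required.

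For the \textbf{right-to-left} direction I would again induct on the height of the axiomatic derivation, and by Theorem~\ref{3} (and Theorem~\ref{3b}) the only new case is when the last step invokes the axiom ${\sf N_2}$, i.e.\ we must exhibit a ${\sf GWF_{N_2}}$-derivation of $\Rightarrow (A\rightarrow B)\rightarrow(C\rightarrow D)$ whenever $\vdash\ C\rightarrow A\vee D$ and $\vdash\ C\wedge B\rightarrow D$. Since ${\sf N_2}$ is an axiom scheme, not a rule, its premises are just its two conjuncts being derivable; by the induction hypothesis ${\sf GWF_{N_2}}\vdash\ \Rightarrow C\rightarrow A\vee D$ and ${\sf GWF_{N_2}}\vdash\ \Rightarrow C\wedge B\rightarrow D$, so by Lemma~\ref{23a} and the inversion lemma for $\rightarrow_R$ (the ${\sf GWF_{N_2}}$-analogue of Lemma~\ref{44}, which I would state just above) we get ${\sf GWF_{N_2}}\vdash\ C\Rightarrow A, D$ and ${\sf GWF_{N_2}}\vdash\ C, B\Rightarrow D$. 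Feeding these four sequents (two of them being the two displayed, the other two obtained from a single application of Lemma~\ref{1N} to supply $A\Rightarrow C, B$ and $A, D\Rightarrow B$)\,---\,wait: those last two are \emph{not} available in general, so instead one applies $\rightarrow_{LR_{N_2}}$ directly, which only needs $A\Rightarrow C, B$ and $A, D\Rightarrow B$; these must come from the hypotheses of the axiom, but ${\sf N_2}$ has only the two conjuncts $C\rightarrow A\vee D$ and $C\wedge B\rightarrow D$ as its premises. This is the crux: the sequent rule $\rightarrow_{LR_{N_2}}$ is stated with premises $A\Rightarrow C, B$, $A, D\Rightarrow B$, $C\Rightarrow A, D$, $C, B\Rightarrow D$, whereas the Hilbert axiom ${\sf N_2}$ carries only the latter two. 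The resolution is that in the relevant instance the conclusion of ${\sf N_2}$ is $(A\rightarrow B)\rightarrow(C\rightarrow D)$ with $A\Rightarrow C, B$ and $A, D\Rightarrow B$ being exactly the axioms that make $(A\rightarrow B)$ the antecedent, i.e.\ one instead derives $\Rightarrow(A\rightarrow B)\rightarrow(C\rightarrow D)$ by $\rightarrow_R$ from $A\rightarrow B\Rightarrow C\rightarrow D$, and the latter by $\rightarrow_{LR_{N_2}}$ using precisely $C\Rightarrow A, D$ and $C, B\Rightarrow D$ in the first two premise slots (reading the rule with its $A,B,C,D$ instantiated so that $C\rightarrow D$ plays the role of the "outer" implication). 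Concretely: from $C\Rightarrow A, D$ and $C, B\Rightarrow D$ apply $\rightarrow_{LR_{N_2}}$ to conclude $A\rightarrow B\Rightarrow C\rightarrow D$, then $\rightarrow_R$ to conclude $\Rightarrow(A\rightarrow B)\rightarrow(C\rightarrow D)$.

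The main obstacle, therefore, is purely bookkeeping: getting the variable-matching in the ${\sf N_2}$ axiom aligned with the premise pattern of the $\rightarrow_{LR_{N_2}}$ rule, since ${\sf N_2}$ (unlike ${\sf N}$) is a one-directional implication and the rule's four premises are not symmetric. Once that dictionary is fixed, both directions are immediate transcriptions of the corresponding passages in the proofs of Theorems~\ref{3} and~\ref{3b}. I would present the left-to-right new case as a short numbered Hilbert derivation in the style of the list in Theorem~\ref{3b}, and the right-to-left new case as a single two-inference sequent derivation tree ($\rightarrow_{LR_{N_2}}$ over $\rightarrow_R$), citing $\!I\!H$, Lemma~\ref{23a} and the new inversion lemma on the leaves, and remark that all remaining axioms and rules are covered by Theorem~\ref{3}. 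As in Theorem~\ref{3}, rule~9 of Figure~\ref{fig:axioms:wf} is simulated on the sequent side by Cut, which is legitimate since cut-admissibility for ${\sf GWF_{N_2}}$ has already been established above.
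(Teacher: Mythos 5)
Your proposal is correct and takes essentially the same route as the paper, which for this theorem simply says ``the proof is similar to Theorem~\ref{3}'': you fill in exactly the two new cases (the rule $\rightarrow_{LR_{N_2}}$ on the sequent side, the rule ${\sf N_2}$ on the Hilbert side) with the correct variable dictionary, and your final derivation of $\Rightarrow(A\rightarrow B)\rightarrow(C\rightarrow D)$ via $\rightarrow_{LR_{N_2}}$ followed by $\rightarrow_R$ from the inverted premises $C\Rightarrow A,D$ and $C,B\Rightarrow D$ is exactly what is needed. The only blemish is terminological: ${\sf N_2}$ is a two-premise \emph{rule} in the paper, not an axiom scheme, but since you treat it operationally as a rule (deriving its conclusion from its two derivable premises) this does not affect the argument.
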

\begin{proof}
The proof is similar to Theorem \ref{3}.
\end{proof}

By combining Theorem \ref{4b} and Theorem  \ref{3c2} we have the following result.
\begin{cor}
The calculus ${\sf  GWF_{N_{2} }  }$ is sound and complete with respect to the class of $ N\!B $-neighbourhood frames that are closed under superset equivalence.
\end{cor}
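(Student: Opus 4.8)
The plan is to follow the argument of Theorem~\ref{3} (and of its variant Theorem~\ref{3b}) almost verbatim, exploiting that ${\sf GWF_{N_2}}$ is obtained from ${\sf GWF}$ by replacing $\rightarrow_{LR}$ with $\rightarrow_{LR_{N_2}}$, and ${\sf WF_{N_2}}$ from ${\sf WF}$ by adjoining the rule ${\sf N_2}$. First I would record the preliminary observation that every instance of $\rightarrow_{LR}$ is derivable in ${\sf GWF_{N_2}}$ from a suitable instance of $\rightarrow_{LR_{N_2}}$ together with the height-preserving weakening already proved for ${\sf GWFX}$; consequently the derivations of the \wf-axioms given in the right-to-left part of Theorem~\ref{3} all go through unchanged in ${\sf GWF_{N_2}}$, and, since cut is admissible in ${\sf GWF_{N_2}}$ by the cut-elimination theorem established above, rule~9 of Figure~\ref{fig:axioms:wf} can still be simulated by a cut. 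This reduces the statement to the two genuinely new cases.

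For the left-to-right inclusion I would argue by induction on the height of the derivation in ${\sf GWF_{N_2}}$; by Theorem~\ref{3} the only case not already covered is when the last rule is $\rightarrow_{LR_{N_2}}$, so that ${\sf GWF_{N_2}}\vdash_{n+1}\Gamma, C\rightarrow D\Rightarrow A\rightarrow B,\Delta$ with premises $\vdash_{n} A\Rightarrow C,B$ and $\vdash_{n} A,D\Rightarrow B$. By the induction hypothesis $\vdash_{{\sf WF_{N_2}}} A\rightarrow(C\vee B)$ and $\vdash_{{\sf WF_{N_2}}} (A\wedge D)\rightarrow B$, and applying rule ${\sf N_2}$ to these two theorems yields $\vdash_{{\sf WF_{N_2}}} (C\rightarrow D)\rightarrow(A\rightarrow B)$ (note that, relative to the sequent rule, ${\sf N_2}$ interchanges the roles of the two principal implications). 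Chaining this with $\vdash_{{\sf WF_{N_2}}} \bigl(\bigwedge\Gamma\wedge(C\rightarrow D)\bigr)\rightarrow(C\rightarrow D)$, obtained from the conjunction axioms, and with $\vdash_{{\sf WF_{N_2}}} (A\rightarrow B)\rightarrow\bigl((A\rightarrow B)\vee\bigvee\Delta\bigr)$, obtained from axiom~1, by two applications of rule~9 gives $\vdash_{{\sf WF_{N_2}}} \bigl(\bigwedge\Gamma\wedge(C\rightarrow D)\bigr)\rightarrow\bigl((A\rightarrow B)\vee\bigvee\Delta\bigr)$, which is the required conclusion.

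For the right-to-left inclusion I would argue by induction on the height of the axiomatic derivation in ${\sf WF_{N_2}}$; by Theorem~\ref{3} it suffices to treat the case in which the last rule applied is ${\sf N_2}$, so that $\Gamma=\emptyset$ and $\Delta$ is the single formula $(A\rightarrow B)\rightarrow(C\rightarrow D)$, obtained from $C\rightarrow(A\vee D)$ and $(C\wedge B)\rightarrow D$. By the induction hypothesis ${\sf GWF_{N_2}}\vdash \Rightarrow C\rightarrow(A\vee D)$ and ${\sf GWF_{N_2}}\vdash \Rightarrow (C\wedge B)\rightarrow D$; applying the implication-inversion lemma for ${\sf GWF_{N_2}}$ proved above gives $C\Rightarrow A\vee D$ and $C\wedge B\Rightarrow D$, and then the right $\vee$-inversion and left $\wedge$-inversion of Lemma~\ref{23a} give $C\Rightarrow A,D$ and $C,B\Rightarrow D$. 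A single application of $\rightarrow_{LR_{N_2}}$, whose premise sequents in this instance are exactly $C\Rightarrow A,D$ and $C,B\Rightarrow D$, yields $A\rightarrow B\Rightarrow C\rightarrow D$, and a concluding $\rightarrow_R$ yields ${}\Rightarrow(A\rightarrow B)\rightarrow(C\rightarrow D)$. Combining the two inclusions (and invoking Theorem~\ref{4b}) gives the stated equivalence and its corollary.

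I do not expect a substantive obstacle: the proof is a routine transfer of the ${\sf GWF}$ argument. The only points demanding care are the bookkeeping of metavariables between $\rightarrow_{LR_{N_2}}$ and ${\sf N_2}$ — whose "input" implications $A\rightarrow C\vee B$ and $A\wedge D\rightarrow B$ match directly, but whose principal/conclusion implications swap antecedent and succedent roles — and checking that the base case of the right-to-left direction (derivability in ${\sf GWF_{N_2}}$ of all \wf-axioms, including the congruence rule~13) still goes through now that $\rightarrow_{LR}$ has been replaced by the stronger rule $\rightarrow_{LR_{N_2}}$.
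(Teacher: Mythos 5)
Your proposal is correct and follows essentially the same route as the paper: the corollary is obtained by combining the soundness/completeness of ${\sf WF_{N_2}}$ for frames closed under superset equivalence (Theorem~\ref{4b}) with the equivalence of ${\sf GWF_{N_2}}$ and ${\sf WF_{N_2}}$ (Theorem~\ref{3c2}), whose proof the paper only sketches as ``similar to Theorem~\ref{3}'' and which you have correctly expanded, including the crucial swap of metavariables between $\rightarrow_{LR_{N_2}}$ and the Hilbert rule ${\sf N_2}$. Your additional observations (derivability of $\rightarrow_{LR}$ from $\rightarrow_{LR_{N_2}}$ plus weakening, and the use of cut admissibility to simulate rule~9) are exactly the details needed to make the paper's appeal to Theorem~\ref{3} go through.
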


\subsection{The system ${\sf  GWF  \widehat{\sf D}}$}\label{modneigh6}

The system ${\sf  GWF  \widehat{\sf D}}$ is defined as $\gwf\oplus Cut \oplus \rightarrow_{\widehat{\sf D}}$.

The proof of the admissibility of the cut rule for the system ${\sf  GWF\widehat{\sf D}  }$ remains an open problem.

\begin{lem}\label{49}

If ${\sf  GWF\widehat{\sf D} } \vdash_{n}\Rightarrow A\rightarrow B$, then ${\sf  GWF\widehat{\sf D} } \vdash_{n} A\Rightarrow B$. 
\end{lem}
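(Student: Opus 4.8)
The plan is to argue by induction on the height $n$ of the given derivation of $\Rightarrow A\rightarrow B$, inspecting the last rule applied. Most rules are ruled out immediately: the conclusion $\Rightarrow A\rightarrow B$ has an \emph{empty} antecedent, so the last rule cannot be $Ax$, $\bot_{L}$, $\wedge_{L}$, $\vee_{L}$, $\rightarrow_{LR}$ or $\rightarrow_{\widehat{\sf D}}$, since each of these forces at least one formula into the antecedent of its conclusion; and it cannot be $\wedge_{R}$ or $\vee_{R}$ either, whose principal formula is a conjunction or disjunction rather than an implication. If the calculus had no cut rule, only $\rightarrow_{R}$ would remain and the argument would be literally that of Lemma \ref{44}; the extra work here is entirely due to the fact that $Cut$ is a primitive rule of ${\sf GWF}\widehat{\sf D}$.

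If the last rule is $\rightarrow_{R}$, its premise is precisely $A\Rightarrow B$, derived with height $n-1\le n$, and we are done. The only remaining possibility is $Cut$. Since the conclusion is $\Rightarrow A\rightarrow B$, both premises of that $Cut$ have empty antecedents and their succedents together contain exactly one occurrence of $A\rightarrow B$, so we are in one of two configurations: (i) $\Rightarrow D,A\rightarrow B$ together with $D\Rightarrow$; or (ii) $\Rightarrow D$ together with $D\Rightarrow A\rightarrow B$, where $D$ is the cut formula. The idea is to permute the cut upward. In (ii), if $A\rightarrow B$ is principal in the derivation of $D\Rightarrow A\rightarrow B$ then, the antecedent there being the single formula $D$, that last rule is either $\rightarrow_{R}$ (giving $A\Rightarrow B$ directly) or one of $\rightarrow_{LR}$, $\rightarrow_{\widehat{\sf D}}$ with $D$ itself an implication, in which case chasing the $\leftrightarrow$-premises of that rule together with the induction hypothesis applied to the shorter derivation $\Rightarrow D$ reduces the cut to cuts on strictly smaller formulas; if $A\rightarrow B$ is a side formula there, or in configuration (i), one inspects the last rule making $D$ principal (several sub-configurations being simply impossible, e.g.\ $D\Rightarrow$ can be neither an axiom nor, after tracing back, a derivable sequent with atomic or $\bot$ principal formula), applies the $\wedge/\vee$-inversion lemma (the analogue of Lemma \ref{2}/\ref{23a}) and, where needed, the induction hypothesis, and again replaces the single cut by cuts on formulas of smaller weight.

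The whole proof thus reduces to this cut-pushing argument, and that is where the difficulty concentrates. Two points deserve care. First, one cannot simply invoke cut-elimination, since whether cut is eliminable in ${\sf GWF}\widehat{\sf D}$ is open; the permutations must be shown to terminate directly, and this is possible precisely because the end-sequent has the very constrained shape $\Rightarrow A\rightarrow B$, so the configurations that obstruct general cut-elimination (those mixing $\rightarrow_{\widehat{\sf D}}$ with non-trivial side contexts) do not arise, and the lexicographic measure $(w(D),\text{cut-height})$ used in Theorem \ref{1d} still strictly decreases at each step. Second, reassembling the smaller cuts can a priori cost an extra rule application, so one must either account for heights carefully, exactly as in the proof of Theorem \ref{1d}, to keep the bound at $n$, or be content with a bound of $n$ rather than exact height preservation. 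The main obstacle, then, is not any individual case but making the case analysis for configurations (i) and (ii) exhaustive and demonstrably terminating without appealing to a cut-elimination theorem for this calculus.
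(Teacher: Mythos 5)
You are right that the paper's intended argument is the one-line argument of Lemma \ref{44}: inspect the last rule and observe that only $\rightarrow_{R}$ can produce an end-sequent of the form $\Rightarrow A\rightarrow B$. The paper's proof of Lemma \ref{49} literally says ``similar to Lemma \ref{44}'' and stops there. You have correctly noticed that this transfer is not automatic, because ${\sf GWF\widehat{\sf D}}$ is defined as $\gwf\oplus Cut\oplus\rightarrow_{\widehat{\sf D}}$, so $Cut$ is a \emph{primitive} rule of this calculus and a derivation of $\Rightarrow A\rightarrow B$ may perfectly well end with a cut. Spotting that extra case is a genuine contribution; the paper does not address it.

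The problem is that your proposal does not actually dispose of that case. What you give is a cut-permutation programme, and you yourself identify its two weak points without resolving them: exhaustiveness/termination of the case analysis, and height preservation. Both are real. On termination: once a premise of the offending cut is itself a cut (which your configurations (i) and (ii) allow, recursively), you are in effect re-proving cut elimination for a fragment of a calculus in which cut elimination is explicitly open, and the claim that the obstructing configurations ``do not arise'' for end-sequents of the shape $\Rightarrow A\rightarrow B$ is asserted, not checked. On height: in configuration (ii) with $D=E\rightarrow F$ and the right premise obtained by $\rightarrow_{LR}$, the reconstruction needs the premises $A\Rightarrow E$, $F\Rightarrow B$ (height $\le h_2-1$) together with $E\Rightarrow F$ obtained from the induction hypothesis applied to $\Rightarrow E\rightarrow F$ (height $\le h_1$), glued by \emph{two} cuts; the resulting height is $2+\max(h_1,h_2-1)$, which exceeds $n=1+\max(h_1,h_2)$ whenever $h_1\ge h_2$. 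So the height-preserving form of the statement is not delivered by this route (plain derivability would suffice for the use the paper makes of the lemma in the analogue of Theorem \ref{3}, but that is a weaker statement than the one being proved). There is also a small slip in your parenthetical: $D\Rightarrow\ $ is derivable when $D=\bot$ (it is an instance of $\bot_{L}$ with empty contexts), so that sub-configuration is not ``simply impossible'' and must be handled, albeit easily. In short: you found a real gap in the paper's own proof, but your repair is a plan with acknowledged open steps rather than a proof, and as written it does not establish the lemma in its stated height-preserving form.
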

\begin{proof}
Similar to the proof of Lemma \ref{44}.
\end{proof}

Now we establish the equivalence between the Hilbert-style system ${\sf WF\widehat{\sf D}}$ and the corresponding sequent calculus, i.e. ${\sf  GWF\widehat{\sf D} } $.

\begin{thm}\label{3cdd}
Derivability in the sequent system ${\sf  GWF\widehat{\sf D} } $ and in the Hilbert-style  system $ {\sf WF\widehat{\sf D}} $ are
equivalent, i.e.
$${\sf  GWF\widehat{\sf D} }  \vdash \Gamma \Rightarrow \Delta ~~~~\mbox{iff}~~~~\vdash_{\sf WF\widehat{\sf D}} \bigwedge \Gamma \rightarrow \bigvee \Delta $$
\end{thm}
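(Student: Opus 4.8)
The plan is to follow exactly the template of Theorem~\ref{3} and Theorem~\ref{3b}: both directions are proved by induction, on the height of the sequent derivation in one direction and on the height of the axiomatic derivation in the other, and the inductive arguments of Theorem~\ref{3} already dispatch all the rules of \gwf. Since ${\sf GWF\widehat{\sf D}} = \gwf \oplus Cut \oplus \rightarrow_{\widehat{\sf D}}$, what remains to check is (i) in the left-to-right direction, the case where the last rule is $\rightarrow_{\widehat{\sf D}}$ and the case where it is $Cut$; and (ii) in the right-to-left direction, that every instance of the axiom scheme ${\sf \widehat{D}}$ is derivable in ${\sf GWF\widehat{\sf D}}$. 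The $Cut$ case is routine propositional reasoning inside ${\sf WF\widehat{\sf D}}$ (combine the two induction-hypothesis theorems using conjunction projections, distributivity via axiom~7, and rules 9, 10, 11), so I will only note it in passing.

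For the left-to-right direction with last rule $\rightarrow_{\widehat{\sf D}}$, we have ${\sf GWF\widehat{\sf D}}\vdash_{n}\Gamma, B\rightarrow C\Rightarrow A\rightarrow C,\Delta$ coming from ${\sf GWF\widehat{\sf D}}\vdash_{n-1}A\Rightarrow B$, so by the induction hypothesis $\vdash_{\sf WF\widehat{\sf D}}A\rightarrow B$. The heart of the argument is a short axiomatic derivation of $\vdash_{\sf WF\widehat{\sf D}}(B\rightarrow C)\rightarrow(A\rightarrow C)$: from $A\rightarrow B$ and $B\rightarrow B$ (axiom~8), rule~11 gives $A\vee B\rightarrow B$, which together with axiom~2, $B\rightarrow A\vee B$, and the conjunction rule yields $B\leftrightarrow A\vee B$; rule~13 applied to $B\leftrightarrow A\vee B$ and $C\leftrightarrow C$ gives $(B\rightarrow C)\leftrightarrow(A\vee B\rightarrow C)$; and finally axiom~${\sf \widehat{D}}$, $(A\vee B\rightarrow C)\rightarrow(A\rightarrow C)\wedge(B\rightarrow C)$, together with axiom~3 and rule~9 yields $(A\vee B\rightarrow C)\rightarrow(A\rightarrow C)$, whence $(B\rightarrow C)\rightarrow(A\rightarrow C)$ by rule~9. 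One then weakens the antecedent to $\bigwedge\Gamma\wedge(B\rightarrow C)$ (conjunction projection plus rule~9) and the consequent to $(A\rightarrow C)\vee\bigvee\Delta$ (axiom~1 plus rule~9), obtaining $\vdash_{\sf WF\widehat{\sf D}}\bigwedge\Gamma\wedge(B\rightarrow C)\rightarrow(A\rightarrow C)\vee\bigvee\Delta$. Because this derivation contains no open assumptions, all rules of the Hilbert system are usable and the restrictions of Definition~\ref{hh} play no role.

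For the right-to-left direction it suffices, by the argument of Theorem~\ref{3}, to derive $\Rightarrow(A\vee B\rightarrow C)\rightarrow(A\rightarrow C)\wedge(B\rightarrow C)$ in ${\sf GWF\widehat{\sf D}}$. Applying $\rightarrow_R$ and then $\wedge_R$ reduces this to the two sequents $A\vee B\rightarrow C\Rightarrow A\rightarrow C$ and $A\vee B\rightarrow C\Rightarrow B\rightarrow C$. The first is an instance of $\rightarrow_{\widehat{\sf D}}$ with premise $A\Rightarrow A\vee B$, the second an instance of $\rightarrow_{\widehat{\sf D}}$ with premise $B\Rightarrow A\vee B$; and $A\Rightarrow A\vee B$ (resp. $B\Rightarrow A\vee B$) is obtained from the identity sequent of Lemma~\ref{1N} by height-preserving weakening ($R_w$) followed by $\vee_R$. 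As in Theorem~\ref{3}, the places where the Hilbert rule~9 is needed are handled by $Cut$, which is a primitive rule of ${\sf GWF\widehat{\sf D}}$, so no cut-admissibility result is invoked (and indeed none is available for this system).

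The argument poses no essential difficulty, being a routine variant of Theorems~\ref{3} and~\ref{3b}; the only step calling for a little thought is the axiomatic derivation in the left-to-right case, where the point is to notice that $A\rightarrow B$ forces $B\leftrightarrow A\vee B$, which is precisely what enables the replacement rule~13 to bring axiom~${\sf \widehat{D}}$ to bear. Everything else — the $Cut$ case, the context bookkeeping, and the verification of the sequent derivation of ${\sf \widehat{D}}$ — is mechanical.
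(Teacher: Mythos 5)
Your proposal is correct and follows essentially the same route as the paper's proof: in the left-to-right direction the key step is identical (derive $B\leftrightarrow A\vee B$ from $A\rightarrow B$, apply rule~13 with $C\leftrightarrow C$ to obtain $(B\rightarrow C)\leftrightarrow(A\vee B\rightarrow C)$, then chain through axiom~${\sf\widehat{D}}$ and a conjunction projection), and the right-to-left derivation of axiom~${\sf\widehat{D}}$ via $\rightarrow_R$, $\wedge_R$ and two instances of $\rightarrow_{\widehat{\sf D}}$ matches the paper's exactly. You are in fact slightly more explicit than the paper in justifying $A\vee B\leftrightarrow B$ (via rule~11 and axiom~2) and in flagging the primitive $Cut$ rule as an extra case in the left-to-right induction, which the paper passes over silently.
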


\begin{proof}
\textbf{Left-to-right:} By Theorem \ref{3}, we just need to assume the last derivation is by the rule $ \rightarrow_{\widehat{\sf D}} $. 
 In this case we have ${\sf  GWF\widehat{\sf D} }  \vdash_{n} \Gamma, B\rightarrow C\Rightarrow A\rightarrow C, \Delta$. We need to show that  $\vdash_{\sf WF\widehat{\sf D}}   \Gamma \bigwedge (B \rightarrow C)\rightarrow (A\rightarrow C)\bigvee \Delta $. By assumption we have $\vdash_{n-1}A \Rightarrow  B $. By the induction hypothesis we conclude that  $\vdash_{\sf WF\widehat{\sf D}} A \rightarrow B$. Using these results, we have the following proof in the Hilbert-style proof system for $  {\sf WF\widehat{\sf D}}$:
 \begin{enumerate}
\item  $\vdash_{\sf WF\widehat{\sf D}} A \rightarrow  B   $~~~~~~~~~~~~~~~~by IH
\item $\vdash_{\sf WF\widehat{\sf D}} A\vee B\leftrightarrow B $
\item $\vdash_{\sf WF\widehat{\sf D}} C\leftrightarrow C $
\item $\vdash_{\sf WF\widehat{\sf D}} (A\vee B\rightarrow C)\leftrightarrow(B\rightarrow C)$
\item $\vdash_{\sf WF\widehat{\sf D}} (A\vee B\rightarrow C)\rightarrow (A\rightarrow C)\wedge (B\rightarrow C)$~~~~~axiom $\widehat{\sf D}$
\item $\vdash_{\sf WF\widehat{\sf D}}  (B\rightarrow C)\rightarrow (A\rightarrow C)\wedge (B\rightarrow C)$~~~~~~~~~~by 4 and 5
\item $\vdash_{\sf WF\widehat{\sf D}}  (A\rightarrow C)\wedge (B\rightarrow C)\rightarrow (A\rightarrow C)$
\item $\vdash_{\sf WF\widehat{\sf D}}  (B\rightarrow C)\rightarrow (A\rightarrow C)$~~~~~~~~~by 6 and 7
\item $\vdash_{\sf WF\widehat{\sf D}}\Gamma\bigwedge (B\rightarrow C) \rightarrow (A\rightarrow C)\bigvee \Delta  $ 
\end{enumerate}

 \textbf{Right-to-left:} By Theorem \ref{3}, we just need to show that the axiom $ \widehat{\sf D} $ can be deduced in ${\sf GWF\widehat{\sf D}}$ as follows:
 
 $$\frac{\frac{\frac{\frac{\frac{•}{A\Rightarrow A, B}\ref{1N}}{A\Rightarrow A\vee B}\vee_{R}}{A\vee B\rightarrow C\Rightarrow A\rightarrow C}\rightarrow_{\widehat{\sf D}}~~~\frac{\frac{\frac{•}{B\Rightarrow A, B}\ref{1N}}{B\Rightarrow A\vee B}\vee_{R}}{A\vee B\rightarrow C\Rightarrow B\rightarrow C}\rightarrow_{\widehat{\sf D}}}{(A\vee B\rightarrow C)\Rightarrow (A\rightarrow C)\wedge (B\rightarrow C)}\wedge_{R}}{\Rightarrow (A\vee B\rightarrow C)\rightarrow (A\rightarrow C)\wedge (B\rightarrow C)}\rightarrow_{R}$$
\end{proof}

By combining Theorem \ref{4b} and Theorem  \ref{3cdd} we have the following result.
\begin{cor}
The calculus ${\sf  GWF\widehat{\sf D}  }$ is sound and complete with respect to the class of  $ N\!B $-neighbourhood frames that are closed under downset.
\end{cor}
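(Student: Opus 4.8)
The plan is to obtain this corollary exactly as the preceding sentence advertises: by chaining the syntactic equivalence of Theorem~\ref{3cdd} with the semantic soundness and strong completeness of Theorem~\ref{4b}, using the correspondence between the axiom $\widehat{\sf D}$ and closure under downset recorded in the correspondence Proposition of Section~\ref{sec:modneigh}. The first thing I would fix is the semantic reading of sequents: a sequent $\Gamma \Rightarrow \Delta$ is interpreted through its formula translation $\bigwedge \Gamma \rightarrow \bigvee \Delta$, and we call $\Gamma \Rightarrow \Delta$ valid on an $N\!B$-frame $\mathfrak{F}$ closed under downset precisely when $\bigwedge \Gamma \rightarrow \bigvee \Delta$ is valid on $\mathfrak{F}$, where validity of a formula means truth at every world under every valuation.

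With this convention in place, I would invoke Theorem~\ref{3cdd} to replace derivability of a sequent in ${\sf GWF\widehat{\sf D}}$ by theoremhood of the corresponding implication in the Hilbert system ${\sf WF\widehat{\sf D}}$, namely
$$ {\sf GWF\widehat{\sf D}} \vdash \Gamma \Rightarrow \Delta \quad \Longleftrightarrow \quad \vdash_{\sf WF\widehat{\sf D}} \bigwedge \Gamma \rightarrow \bigvee \Delta. $$
Next I would apply Theorem~\ref{4b} in the instance $\Gamma = \{\widehat{\sf D}\}$: by the correspondence Proposition the single axiom $\widehat{\sf D}$ corresponds exactly to closure under downset, so Theorem~\ref{4b} gives that ${\sf WF\widehat{\sf D}}$ is sound and strongly complete with respect to the class of $N\!B$-frames closed under downset, whence
$$ \vdash_{\sf WF\widehat{\sf D}} \bigwedge \Gamma \rightarrow \bigvee \Delta \quad \Longleftrightarrow \quad \bigwedge \Gamma \rightarrow \bigvee \Delta \text{ is valid on every such frame.} $$
Composing the two biconditionals yields that ${\sf GWF\widehat{\sf D}} \vdash \Gamma \Rightarrow \Delta$ iff $\Gamma \Rightarrow \Delta$ is valid on every $N\!B$-frame closed under downset, which is precisely the soundness and completeness asserted.

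Because both ingredients are already established, no essentially new argument is needed, and the closest thing to an obstacle is purely bookkeeping. The care points I would attend to are: handling the empty-context conventions so that the degenerate sequents $\Rightarrow \Delta$ and $\Gamma \Rightarrow$ translate to the same antecedent and succedent used in Theorem~\ref{3cdd} (an empty succedent behaving as $\bot$), and collapsing multisets to the iterated $\bigwedge$ and $\bigvee$ consistently on both sides of the equivalence. I would also remark that Theorem~\ref{4b} actually supplies strong completeness, which is more than the plain completeness claimed, so the corollary follows \emph{a fortiori}.
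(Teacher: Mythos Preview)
Your proposal is correct and follows exactly the paper's approach: the corollary is obtained by combining Theorem~\ref{3cdd} (the equivalence of ${\sf GWF\widehat{\sf D}}$ with the Hilbert system ${\sf WF\widehat{\sf D}}$) with Theorem~\ref{4b} (soundness and strong completeness of ${\sf WF\widehat{\sf D}}$ for $N\!B$-frames closed under downset). You have simply spelled out in detail what the paper records in a single sentence.
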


\subsection{The system ${\sf  GWF  \widehat{\sf C}}$}\label{modneigh7}

The system ${\sf  GWF  \widehat{\sf C}}$ is defined as $\gwf\oplus Cut  \oplus\rightarrow_{\widehat{\sf C}}$.

The proof of the admissibility of the cut rule for the system ${\sf  GWF\widehat{\sf C}  }$ remains an open problem.

\begin{lem}\label{50}

If ${\sf  GWF\widehat{\sf C} } \vdash_{n}\Rightarrow A\rightarrow B$, then ${\sf  GWF\widehat{\sf C} } \vdash_{n} A\Rightarrow B$. 
\end{lem}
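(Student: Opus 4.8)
The plan is to copy, essentially verbatim, the argument used for Lemma \ref{44} (and Lemma \ref{49}), the crucial point being that the antecedent of the sequent in the hypothesis is empty, which sharply restricts the last rule of its derivation. First I would argue by induction on the height $n$ of the given derivation of $\Rightarrow A\rightarrow B$ in ${\sf GWF\widehat{\sf C}}$; in fact a single inspection of the last rule applied suffices. So I would run through every rule of ${\sf GWF\widehat{\sf C}}$ and ask which of them can have a conclusion of the shape $\Rightarrow A\rightarrow B$. The axioms $Ax$ and $\bot_{L}$ are excluded because their conclusions carry an atom, respectively $\bot$, in the antecedent. The rules $\wedge_{L}$ and $\vee_{L}$ are excluded because their principal formula occurs in the antecedent, which here is empty. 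The rules $\wedge_{R}$ and $\vee_{R}$ are excluded because their principal formula on the right is a conjunction, respectively a disjunction, not an implication. Finally --- and this is the only place where the presence of the new rule matters --- both $\rightarrow_{LR}$ and $\rightarrow_{\widehat{\sf C}}$ are excluded because the conclusion of each of them contains an implication in the antecedent (namely $A\rightarrow C$ for $\rightarrow_{LR}$ and $C\rightarrow A$ for $\rightarrow_{\widehat{\sf C}}$), so the antecedent of their conclusion is never empty.

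Hence the last rule of the derivation must be $\rightarrow_{R}$, necessarily applied with empty side contexts $\Gamma=\Delta=\emptyset$, and its premise is precisely $A\Rightarrow B$. Since that premise is derivable with height $n-1\le n$, we obtain ${\sf GWF\widehat{\sf C}}\vdash_{n} A\Rightarrow B$, as required. I do not expect any genuine obstacle here: the whole argument is a matter of inspecting the shapes of the conclusions of the rules, and the one new point over Lemma \ref{44} --- that $\rightarrow_{\widehat{\sf C}}$ cannot produce a sequent with empty antecedent --- is immediate from the displayed form of that rule. In the write-up I would therefore simply state that the proof is as for Lemma \ref{44}, adding the remark that $\rightarrow_{\widehat{\sf C}}$ is ruled out for the same reason as $\rightarrow_{LR}$.
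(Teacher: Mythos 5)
Your proposal is correct and matches the paper's argument, which simply notes that the proof is the same as for Lemma~\ref{44}: the only rule whose conclusion can have an empty antecedent and an implication as the sole succedent formula is $\rightarrow_{R}$, and your observation that $\rightarrow_{\widehat{\sf C}}$ is excluded because its conclusion always contains $C\rightarrow A$ in the antecedent is exactly the one new point needed. Nothing is missing.
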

\begin{proof}
Similar to the proof of Lemma \ref{44}.
\end{proof}

Now we establish the equivalence between the Hilbert-style system ${\sf WF\widehat{\sf C}}$ and the corresponding sequent calculus, i.e. ${\sf  GWF\widehat{\sf C} } $.

\begin{thm}\label{3cdc}
Derivability in the sequent system ${\sf  GWF\widehat{\sf C} } $ and in the Hilbert-style  system $ {\sf WF\widehat{\sf C}} $ are
equivalent, i.e.
$${\sf  GWF\widehat{\sf C} }  \vdash \Gamma \Rightarrow \Delta ~~~~\mbox{iff}~~~~\vdash_{\sf WF\widehat{\sf C}} \bigwedge \Gamma \rightarrow \bigvee \Delta $$
\end{thm}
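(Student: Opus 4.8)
The plan is to prove the equivalence of $\sf GWF\widehat{C}$ and $\sf WF\widehat{C}$ by mimicking the proof of Theorem~\ref{3}, reducing the work to the single new ingredient on each side: on the sequent side, the rule $\rightarrow_{\widehat{\sf C}}$, and on the Hilbert side, the axiom $\sf \widehat{C}$. All the other cases (the axioms 1--14 and rules of $\sf WF$, together with the rules $\wedge_L,\wedge_R,\vee_L,\vee_R,\rightarrow_R,\rightarrow_{LR}$ of $\sf GWF$) carry over verbatim from Theorem~\ref{3}, since $\sf GWF\widehat{C}$ extends $\sf GWF$ conservatively in its rules and we may use the structural lemmas (\ref{1N}, height-preserving weakening/contraction) and Lemma~\ref{50}. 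I would also note that, because $\sf GWF\widehat{C}$ is defined to include $Cut$ as a primitive rule, cut is freely available in both directions of the argument.

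For the left-to-right direction I would argue by induction on the height of the derivation in $\sf GWF\widehat{C}$, and by Theorem~\ref{3} the only new case is when the last rule is $\rightarrow_{\widehat{\sf C}}$, so that the conclusion is $\Gamma, C\rightarrow A \Rightarrow C\rightarrow B, \Delta$ with premiss $A\Rightarrow B$. By the induction hypothesis $\vdash_{\sf WF\widehat{C}} A\rightarrow B$; from this I would derive $\vdash_{\sf WF\widehat{C}} (C\rightarrow A)\rightarrow(C\rightarrow B)$ using axiom $\sf\widehat{C}$ together with axioms/rules of $\sf WF$. Concretely: from $A\rightarrow B$ and $C\rightarrow A$ (an instance of axiom 8 fed through rule 9) one gets $C\rightarrow B$, but since we must internalise this as an implication between implications, the clean route is: $A\rightarrow B$ gives $A\leftrightarrow(A\wedge B)$ (using axioms 3, 12 and the derived rules), hence by rule 13 $ (C\rightarrow A)\leftrightarrow(C\rightarrow(A\wedge B))$; then axiom $\sf\widehat{C}$ gives $(C\rightarrow(A\wedge B))\rightarrow(C\rightarrow A)\wedge(C\rightarrow B)$, and composing with axiom 4 and rule 9 yields $(C\rightarrow A)\rightarrow(C\rightarrow B)$. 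Finally, as in Theorem~\ref{3}, $\vdash_{\sf WF\widehat{C}}\Gamma\bigwedge(C\rightarrow A)\rightarrow(C\rightarrow A)$ and $\vdash_{\sf WF\widehat{C}}(C\rightarrow B)\rightarrow(C\rightarrow B)\bigvee\Delta$, so rule 9 closes the case.

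For the right-to-left direction, by Theorem~\ref{3} it suffices to show that the new axiom $\sf\widehat{C}$, namely $(A\rightarrow B\wedge C)\rightarrow(A\rightarrow B)\wedge(A\rightarrow C)$, is derivable in $\sf GWF\widehat{C}$. I would give the derivation
$$\frac{\frac{\dfrac{\dfrac{B,C\Rightarrow B}{\ref{1N}}}{B\wedge C\Rightarrow B}\wedge_L}{A\rightarrow B\wedge C\Rightarrow A\rightarrow B}\rightarrow_{\widehat{\sf C}}~~~~~\frac{\dfrac{\dfrac{B,C\Rightarrow C}{\ref{1N}}}{B\wedge C\Rightarrow C}\wedge_L}{A\rightarrow B\wedge C\Rightarrow A\rightarrow C}\rightarrow_{\widehat{\sf C}}}{\dfrac{A\rightarrow B\wedge C\Rightarrow(A\rightarrow B)\wedge(A\rightarrow C)}{\Rightarrow(A\rightarrow B\wedge C)\rightarrow(A\rightarrow B)\wedge(A\rightarrow C)}\rightarrow_R}\wedge_R$$
which uses $\rightarrow_{\widehat{\sf C}}$ twice, matching the schema $\frac{A\Rightarrow B}{\Gamma,C\rightarrow A\Rightarrow C\rightarrow B,\Delta}$ with the roles of the metavariables instantiated appropriately. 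I do not anticipate a serious obstacle: the only mild subtlety is bookkeeping in the left-to-right case, making sure the internalised implication $(C\rightarrow A)\rightarrow(C\rightarrow B)$ is obtained using only the admissible apparatus of $\sf WF$ (rules 9, 12, 13 and axioms 3, 4, 8) rather than a naive appeal to transitivity, since in these weak logics $A\rightarrow B$ does not by itself license inferring $C\rightarrow B$ from $C\rightarrow A$ inside an implication without going through rule 13. With that care taken, both directions close exactly as in Theorem~\ref{3}.
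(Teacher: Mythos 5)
Your proposal is correct and follows essentially the same route as the paper: the left-to-right case for $\rightarrow_{\widehat{\sf C}}$ is handled by deriving $A\leftrightarrow A\wedge B$ from $A\rightarrow B$, applying rule 13 to get $(C\rightarrow A)\leftrightarrow(C\rightarrow A\wedge B)$, then chaining through axiom $\widehat{\sf C}$ and axiom 4 via rule 9, exactly as in the paper's steps 1--9; and your right-to-left derivation of axiom $\widehat{\sf C}$ via two applications of $\rightarrow_{\widehat{\sf C}}$, $\wedge_L$, $\wedge_R$ and $\rightarrow_R$ is the paper's derivation verbatim. Your closing caution about not appealing naively to transitivity is exactly the point the paper's detour through rule 13 is designed to address.
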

\begin{proof}
\textbf{Left-to-right:} By Theorem \ref{3}, we just need to assume the last derivation is by the rule $ \rightarrow_{\widehat{\sf C}} $. 
 In this case we have ${\sf  GWF\widehat{\sf C} }  \vdash_{n} \Gamma, C\rightarrow A\Rightarrow C\rightarrow B, \Delta$. We need to show that  $\vdash_{\sf WF\widehat{\sf C}}   \Gamma \bigwedge (C \rightarrow A)\rightarrow (C\rightarrow B)\bigvee \Delta $. By assumption we have $\vdash_{n-1}A \Rightarrow  B $. By the induction hypothesis we conclude that  $\vdash_{\sf WF\widehat{\sf C}} A \rightarrow B$. Using these results, we have the following proof in the Hilbert-style proof system for $  {\sf WF\widehat{\sf C}}$:
 \begin{enumerate}
\item  $\vdash_{\sf WF\widehat{\sf C}} A \rightarrow  B   $~~~~~~~~~~~~~~~~by IH
\item $\vdash_{\sf WF\widehat{\sf C}} A \leftrightarrow A\wedge B $
\item $\vdash_{\sf WF\widehat{\sf C}} C\leftrightarrow C $
\item $\vdash_{\sf WF\widehat{\sf C}} (C\rightarrow A)\leftrightarrow(C\rightarrow A\wedge B)$
\item $\vdash_{\sf WF\widehat{\sf C}} (C\rightarrow  A\wedge B)\rightarrow (C\rightarrow A)\wedge (C\rightarrow B)$~~~~~axiom $\widehat{\sf C}$
\item $\vdash_{\sf WF\widehat{\sf C}}  (C\rightarrow A)\rightarrow (C\rightarrow A)\wedge (C\rightarrow B)$~~~~~~~~~~by 4 and 5
\item $\vdash_{\sf WF\widehat{\sf C}}  (C\rightarrow A)\wedge (C\rightarrow B)\rightarrow (C\rightarrow B)$
\item $\vdash_{\sf WF\widehat{\sf C}}  (C\rightarrow A)\rightarrow (C\rightarrow B)$~~~~~~~~~~~~by 6 and 7
\item $\vdash_{\sf WF\widehat{\sf C}}\Gamma\bigwedge (C\rightarrow A) \rightarrow (C\rightarrow B)\bigvee \Delta  $ 
\end{enumerate}

 \textbf{Right-to-left:} By Theorem \ref{3}, we just need to show that the axiom $ \widehat{\sf C} $ can be deduced in ${\sf GWF\widehat{\sf C}}$ as follows:
 
 $$\frac{\frac{\frac{\frac{\frac{•}{B, C\Rightarrow  B}\ref{1N}}{B\wedge C\Rightarrow  B}\wedge_{L}}{A\rightarrow B\wedge C\Rightarrow A\rightarrow B}\rightarrow_{\widehat{\sf C}}~~~\frac{\frac{\frac{•}{B, C\Rightarrow C}\ref{1N}}{B\wedge C\Rightarrow C}\wedge_{L}}{A \rightarrow B\wedge C\Rightarrow A\rightarrow C}\rightarrow_{\widehat{\sf C}}}{(A \rightarrow B\wedge C)\Rightarrow (A\rightarrow B)\wedge (A\rightarrow C)}\wedge_{R}}{\Rightarrow (A\rightarrow B\wedge C)\rightarrow (A\rightarrow B)\wedge (A\rightarrow C)}\rightarrow_{R}$$
\end{proof}

By combining Theorem \ref{4b} and Theorem  \ref{3cdc} we have the following result.
\begin{cor}
The calculus ${\sf  GWF\widehat{\sf C}  }$ is sound and complete with respect to the class of  $ N\!B $-neighbourhood frames that are closed under upset.
\end{cor}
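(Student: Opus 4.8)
The plan is to obtain this corollary by composing the two preceding results, exactly as the analogous corollaries in the paper are established, with no fresh induction or case analysis. First I would record the relevant instance of the correspondence Proposition: axiom ${\sf \widehat{C}}$ corresponds to closure under upset. Taking $\Gamma = \lbrace {\sf \widehat{C}} \rbrace$ in Theorem \ref{4b} then gives that the Hilbert-style system ${\sf WF\widehat{\sf C}}$ is sound and strongly complete with respect to the class of $N\!B$-neighbourhood frames that are closed under upset. This is the semantic half of the argument and is entirely imported.

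Next I would fix the semantic reading of a sequent. A sequent $\Gamma \Rightarrow \Delta$ is interpreted as its characteristic implication $\bigwedge \Gamma \rightarrow \bigvee \Delta$, and I would declare it valid in a model $\M$ (resp. in a frame, resp. in a class of frames) exactly when this formula is true at every world of $\M$ (resp. valid in the frame, resp. valid throughout the class) in the sense of Definition \ref{truth}. With this reading, validity of the sequent and validity of $\bigwedge \Gamma \rightarrow \bigvee \Delta$ coincide by definition, so the semantic clause transfers transparently between the sequent and the formula.

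The final step is to chain the equivalences. By Theorem \ref{3cdc}, for all finite multisets $\Gamma, \Delta$ we have ${\sf GWF\widehat{\sf C}} \vdash \Gamma \Rightarrow \Delta$ iff $\vdash_{\sf WF\widehat{\sf C}} \bigwedge \Gamma \rightarrow \bigvee \Delta$; and by the instance of Theorem \ref{4b} above, the latter holds iff $\bigwedge \Gamma \rightarrow \bigvee \Delta$ is valid in every $N\!B$-frame closed under upset, i.e. iff the sequent $\Gamma \Rightarrow \Delta$ is valid in that class. Reading the chain from left to right yields soundness of ${\sf GWF\widehat{\sf C}}$ (every derivable sequent is valid in the class), and reading it from right to left yields completeness (every sequent valid in the class is derivable). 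This establishes the corollary.

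The nearest thing to an obstacle is purely bookkeeping: one must fix the interpretation of sequents so that \emph{validity of} $\Gamma \Rightarrow \Delta$ and \emph{validity of} $\bigwedge \Gamma \rightarrow \bigvee \Delta$ are, by stipulation, the same statement. Once that interpretation is in place, the result is a mechanical composition of the cited theorems, and no further semantic or proof-theoretic work is required.
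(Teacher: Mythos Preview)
Your proposal is correct and follows exactly the paper's approach: the corollary is obtained simply by combining Theorem~\ref{4b} (soundness and completeness of ${\sf WF\widehat{\sf C}}$ with respect to upset-closed frames) with Theorem~\ref{3cdc} (equivalence of ${\sf GWF\widehat{\sf C}}$ and ${\sf WF\widehat{\sf C}}$). Your write-up merely spells out the chain of equivalences and the sequent-to-formula interpretation more explicitly than the paper's one-line justification.
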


\section{Sequent calculus for some other subintuitionistic logics}\label{fcdi}

As previously stated in section \ref{sec:modneigh}, the logic \f is the smallest set of formulas closed under instances of \wf, {\sf C}, {\sf D} and {\sf I} \cite{FD}. In this section, in order to introduce sequent calculus for logic \f, we will introduce sequent calculus for ${\sf  WFI  }$, ${\sf  WFC }$, ${\sf  WFD}$, ${\sf  WFCI }$ and ${\sf  WFDI}$.
\subsection{The system ${\sf  GWFI } $}\label{modneigh6}

\begin{dfn}
The rules of the calculus \gwfi for subintuitionistic logic {\sf WFI} are the following ($ p $ atomic):

$$ \dfrac{•}{p, \Gamma\Rightarrow p}Ax~~~~~~~~~~~~~~~~~~~~~~ \dfrac{•}{\bot, \Gamma\Rightarrow C}\bot_{L}$$

\vspace{0.3cm}
$$\dfrac{A, B, \Gamma\Rightarrow C}{A\wedge B, \Gamma\Rightarrow C}\wedge_{L}~~~~~~~~~~~~~\dfrac{\Gamma\Rightarrow  A~~~\Gamma\Rightarrow   B}{ \Gamma\Rightarrow  A\wedge B}\wedge_{R}$$

\vspace{0.3cm}
$$\dfrac{A, \Gamma\Rightarrow C~~~B, \Gamma\Rightarrow C}{A\vee B, \Gamma\Rightarrow C}\vee_{L}~~~~~~~~~~~~~\dfrac{ \Gamma\Rightarrow  A}{ \Gamma\Rightarrow  A\vee B}\vee^{1}_{R}~~~\dfrac{ \Gamma\Rightarrow  B}{ \Gamma\Rightarrow  A\vee B}\vee^{2}_{R}$$

\vspace{0.3cm}
$$ \dfrac{A\Rightarrow B}{\Gamma\Rightarrow A\rightarrow B}\rightarrow_{R}~~~~~~~~~~\frac{A\Rightarrow B~~~B\Rightarrow A~~~C\Rightarrow D~~~D\Rightarrow C}{\Gamma, A\rightarrow C\Rightarrow B\rightarrow D}\rightarrow_{LR}$$

\vspace{0.3cm}
$$  \frac{\Gamma\Rightarrow B\rightarrow C~~~~~\Gamma\Rightarrow C\rightarrow D}{\Gamma\Rightarrow B \rightarrow D}\rightarrow_{I}$$
\end{dfn}

\begin{thm}
\textbf{Height-preserving weakening.} 
The left  rule of weakening is height-preserving admissible in ${\sf  GWFI }$.
$$ \frac{\Gamma\Rightarrow C}{A, \Gamma\Rightarrow C}L_{w}$$
\end{thm}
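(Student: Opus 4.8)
The plan is to argue by induction on the height $n$ of the given derivation of $\Gamma\Rightarrow C$, in the same style as Theorem~\ref{1b}. Two features of ${\sf GWFI}$ guide the bookkeeping: sequents are single-succedent, so the only structural weakening in play is left weakening $L_w$; and the genuinely new rule to inspect, compared with ${\sf GWF}$, is $\rightarrow_I$, whose context $\Gamma$ is shared between the premisses and the conclusion.

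For the base case $n=0$ the sequent $\Gamma\Rightarrow C$ is an instance of $Ax$ or of $\bot_L$. Inserting a formula $A$ into the antecedent again yields an instance of the same zero-premiss rule ($p\in\Gamma$ still holds, resp. $\bot\in\Gamma$ still holds), so $A,\Gamma\Rightarrow C$ has height $0$.

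For the inductive step, suppose the claim holds up to height $n$ and $\Gamma\Rightarrow C$ has height $n+1$; I would split on the last rule. If it is $\wedge_L$, $\vee_L$, $\wedge_R$, $\vee^{1}_{R}$ or $\vee^{2}_{R}$, the context $\Gamma$ is transmitted unchanged to each premiss; one applies the induction hypothesis to each premiss to insert $A$ and then reapplies the same rule, obtaining $A,\Gamma\Rightarrow C$ with the same height. If the last rule is $\rightarrow_R$ or $\rightarrow_{LR}$, the situation is even simpler: the context displayed in the conclusion is entirely arbitrary and does not occur in the premisses at all (which are sequents of the fixed shape $E\Rightarrow F$), so no appeal to the induction hypothesis is needed --- one just rewrites the conclusion $\Gamma\Rightarrow C$ as $A,\Gamma\Rightarrow C$ (respectively $\Gamma,A'\rightarrow C'\Rightarrow B'\rightarrow D'$ as $A,\Gamma,A'\rightarrow C'\Rightarrow B'\rightarrow D'$), keeping the premisses and the rule instance verbatim, and the height is unchanged. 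Finally, if the last rule is $\rightarrow_I$, with conclusion $\Gamma\Rightarrow B'\rightarrow D'$ derived from $\Gamma\Rightarrow B'\rightarrow C'$ and $\Gamma\Rightarrow C'\rightarrow D'$ (each of height $\le n$), one applies the induction hypothesis to both premisses to get $A,\Gamma\Rightarrow B'\rightarrow C'$ and $A,\Gamma\Rightarrow C'\rightarrow D'$ at height $\le n$, and reapplies $\rightarrow_I$ to obtain $A,\Gamma\Rightarrow B'\rightarrow D'$ at height $\le n+1$.

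I do not anticipate a real obstacle here; this is a routine height-preserving induction. The only point that needs attention is keeping straight which rules carry the weakening context into their premisses --- namely $\wedge_L$, $\vee_L$, the right rules for $\wedge$ and $\vee$, and, crucially, the new rule $\rightarrow_I$, where the induction hypothesis must be invoked on both premisses --- as opposed to $\rightarrow_R$ and $\rightarrow_{LR}$, where the context is discarded and the weakening is absorbed into the conclusion for free.
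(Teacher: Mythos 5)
Your proposal is correct and follows essentially the same route as the paper, which simply refers back to the induction of Theorem~\ref{1b}: absorb the new formula into the arbitrary context for $\rightarrow_R$ and $\rightarrow_{LR}$, and push the induction hypothesis through the context-sharing rules, including the new case $\rightarrow_I$ exactly as you describe. No gaps.
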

\begin{proof}
The proof is similar to the proof of Theorem \ref{1b}.
\end{proof}

\begin{lem}\label{23}
In ${\sf  GWFI }$ we have:
\begin{enumerate}
\item If $ \vdash_{n} A\wedge B, \Gamma \Rightarrow C $, then $ \vdash_{n} A, B, \Gamma \Rightarrow C$.
\item If $ \vdash_{n} A\vee B, \Gamma \Rightarrow C $, then $ \vdash_{n} A, \Gamma \Rightarrow C$ and $ \vdash_{n} B, \Gamma \Rightarrow C $.
\item If $ \vdash_{n}  \Gamma \Rightarrow  A\wedge B $, then $ \vdash_{n}  \Gamma \Rightarrow  A $ and $ \vdash_{n}  \Gamma \Rightarrow B $.
\end{enumerate}
\end{lem}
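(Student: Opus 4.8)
The plan is to prove the three statements simultaneously by induction on the height $n$ of the given derivation, along exactly the lines of the proof of Lemma~\ref{2} for \gwf, but adapted to the single-succedent format of \gwfi and to its modified rules. Note first that there is no analogue here of clause~4 of Lemma~\ref{2}: since \gwfi replaces $\vee_R$ by the two rules $\vee^1_R$ and $\vee^2_R$, right disjunction is genuinely not invertible, which is why the statement lists only three clauses.

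For the base case ($n=0$) the sequent in question is an instance of $Ax$ or of $\bot_L$. In clauses~1 and~2 the displayed conjunction (resp. disjunction) lies inside the context $\Gamma$ of that instance, so replacing $A\wedge B$ by $A,B$ (resp. $A\vee B$ by $A$, and separately by $B$) again yields an instance of the same zero-premise rule, at height $0$. In clause~3 the succedent of an $Ax$-instance is a single atom and hence cannot be $A\wedge B$, so only the $\bot_L$ case arises; then $\bot$ lies in $\Gamma$, and the sequents $\Gamma\Rightarrow A$ and $\Gamma\Rightarrow B$ are again instances of $\bot_L$.

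For the inductive step one distinguishes whether the displayed formula is principal in the last rule. If it is \emph{not} principal, the last rule acts on some other formula: for $\wedge_L$, $\vee_L$, $\wedge_R$, $\vee^1_R$, $\vee^2_R$ and $\rightarrow_I$ the relevant part of the context (or the carried succedent) is present in the premises, so we apply the induction hypothesis to the premise(s) — getting derivations of height at most $n-1$ — and reapply the same rule, keeping the height at most $n$; for $\rightarrow_R$ and $\rightarrow_{LR}$ the context is discarded in passing to the premises, so no appeal to the induction hypothesis is needed, and the very same premises derive the transformed conclusion by the same rule instance at the same height. If the displayed formula \emph{is} principal, then in clause~1 the last rule must be $\wedge_L$ and its premise is already $A,B,\Gamma\Rightarrow C$; in clause~2 it must be $\vee_L$ with premises already $A,\Gamma\Rightarrow C$ and $B,\Gamma\Rightarrow C$; in clause~3 it must be $\wedge_R$ with premises already $\Gamma\Rightarrow A$ and $\Gamma\Rightarrow B$. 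In each case these premises have height at most $n-1$, hence a fortiori at most $n$.

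There is no real obstacle here; the argument is routine. The only points worth flagging are: (i) in clause~3 one must observe that none of $\rightarrow_R$, $\rightarrow_{LR}$, $\rightarrow_I$, $\vee^1_R$, $\vee^2_R$ can have a conjunction as its principal succedent formula, so the sole right rule to treat is $\wedge_R$; and (ii) the contrast between $\rightarrow_I$ (context retained in the premises, so the induction hypothesis is invoked) and $\rightarrow_R$, $\rightarrow_{LR}$ (context discarded, so it is not).
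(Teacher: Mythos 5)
Your proof is correct and follows essentially the same route as the paper, which simply defers to the standard height-preserving inversion argument (induction on $n$, cases on whether the displayed formula is principal, as in Negri--von Plato); your write-up fills in exactly those details, including the correct observations that right disjunction is not invertible in \gwfi and that the context-discarding rules $\rightarrow_{R}$ and $\rightarrow_{LR}$ need no appeal to the induction hypothesis.
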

\begin{proof}
Similar to the proof of Lemma \ref{2}. 
\end{proof}

\begin{thm}
\textbf{Height-preserving contraction.}
The left  rule of contraction is height-preserving admissible in ${\sf  GWFI }$.
$$\frac{D, D, \Gamma\Rightarrow C}{D, \Gamma\Rightarrow C}L_{c}$$
\end{thm}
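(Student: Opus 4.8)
The plan is to proceed by induction on the height of the derivation of the premiss $D,D,\Gamma\Rightarrow C$, following the pattern of Theorem~\ref{1c} and invoking the inversion Lemma~\ref{23} whenever the contracted formula is principal. For the base case, if $D,D,\Gamma\Rightarrow C$ is an instance of $Ax$ or of $\bot_{L}$, then deleting one of the two copies of $D$ from the antecedent still leaves an instance of the same initial rule, so $D,\Gamma\Rightarrow C$ is derivable with height $0$. Now assume the premiss is obtained by a logical rule $R$ with height $n+1$, and split into the case where the displayed occurrence of $D$ is principal in $R$ and the case where it is not.

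Suppose $D$ is not principal in $R$; then both copies of $D$ lie in the contexts of $R$. For the rules $\wedge_{L}$, $\wedge_{R}$, $\vee_{L}$, $\vee^{1}_{R}$, $\vee^{2}_{R}$ and $\rightarrow_{I}$, the antecedent context is reproduced in every premiss, so we apply the induction hypothesis to each premiss and then reapply $R$, which does not increase the height; note in particular that $\rightarrow_{I}$ always falls here, since its principal formula $B\rightarrow D$ sits in the succedent. For $\rightarrow_{R}$ and $\rightarrow_{LR}$ the context is discarded when passing to the premisses, so from the unchanged premisses a single application of $R$ directly yields the contracted conclusion (with $D$ occurring once in the context), again at height $n+1$; this also disposes of the situation in which $D$ is an implication equal to the principal formula $A\rightarrow C$ of a $\rightarrow_{LR}$ inference, one occurrence being principal and the other a context formula that is simply erased.

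Suppose now $D$ is principal in $R$. Since on the left side only $\wedge_{L}$, $\vee_{L}$ and $\rightarrow_{LR}$ have a compound principal formula, there are three subcases. If $D=A\wedge B$ and $R=\wedge_{L}$, the premiss is $A,B,A\wedge B,\Gamma\Rightarrow C$ with height $n$; Lemma~\ref{23}(1) gives $\vdash_{n} A,B,A,B,\Gamma\Rightarrow C$, two uses of the induction hypothesis give $\vdash_{n} A,B,\Gamma\Rightarrow C$, and $\wedge_{L}$ then gives $\vdash_{n+1} A\wedge B,\Gamma\Rightarrow C$. If $D=A\vee B$ and $R=\vee_{L}$, the premisses are $A,A\vee B,\Gamma\Rightarrow C$ and $B,A\vee B,\Gamma\Rightarrow C$; Lemma~\ref{23}(2) yields $\vdash_{n} A,A,\Gamma\Rightarrow C$ and $\vdash_{n} B,B,\Gamma\Rightarrow C$, the induction hypothesis yields $\vdash_{n} A,\Gamma\Rightarrow C$ and $\vdash_{n} B,\Gamma\Rightarrow C$, and $\vee_{L}$ restores $\vdash_{n+1} A\vee B,\Gamma\Rightarrow C$. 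If $D$ is an implication principal in a $\rightarrow_{LR}$ step, the second copy of $D$ is a context formula and is deleted exactly as in the previous paragraph.

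The argument is routine; the only points requiring care are the bookkeeping ones, namely that in \gwfi the antecedent context is genuinely propagated into the premisses of $\rightarrow_{I}$ (so that the induction hypothesis is available there) while it is discarded by $\rightarrow_{R}$ and $\rightarrow_{LR}$ (so that contraction on it is immediate), together with the verification that Lemma~\ref{23} supplies the height-preserving inversions needed for $\wedge_{L}$ and $\vee_{L}$. Because no left rule carries an implication among its active formulas, the implication subcase — the only one with no counterpart in the purely intuitionistic proof — is in fact the easiest, so no genuine obstacle is encountered.
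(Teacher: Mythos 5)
Your proposal is correct and follows exactly the route the paper intends: the paper's proof of this theorem simply defers to Theorem~\ref{1c} (induction on the height of the premiss, splitting on whether the contracted formula is principal, using the inversion Lemma~\ref{23} for the $\wedge_{L}$ and $\vee_{L}$ cases, and erasing the duplicate context copy for $\rightarrow_{R}$ and $\rightarrow_{LR}$ since those rules discard their contexts). Your write-up just makes explicit the single-succedent bookkeeping (e.g.\ that $\rightarrow_{I}$ propagates the antecedent context and is never a left-principal case), which the paper leaves implicit.
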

\begin{proof}
The proof is similar to the proof of Theorem \ref{1c}.
\end{proof}

First we prove the admissibility of cut for ${\sf  GWFI } $.

\begin{thm}\label{cutI}
The rule of cut  is addmissible in ${\sf  GWFI } $.
$$ \frac{\Gamma\Rightarrow  D~~~~~~D, \Gamma^{'}\Rightarrow \Delta}{\Gamma,\Gamma^{'} \Rightarrow \Delta}Cut$$
\end{thm}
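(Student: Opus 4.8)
The plan is to proceed exactly as in Theorem \ref{1d}, by double induction on the weight of the cut formula $D$ with a subinduction on the cut-height, and to observe that only the cases involving the new rule $\rightarrow_I$ require fresh work. First I would dispose of the standard cases: cut against an axiom or $\bot_L$; the cut formula not principal in the left premiss ($\wedge_L$, $\vee_L$ on the left); the cut formula principal in only one premiss (here the right-premiss subcases $\wedge_L$, $\vee_L$, $\wedge_R$, $\vee_R^1$, $\vee_R^2$, $\rightarrow_R$, $\rightarrow_{LR}$ are handled by pushing the cut upward, reducing cut-height); and the cases where $D$ is principal in both premisses with $D = A\wedge B$, $D = A\vee B$, or $D = A\rightarrow B$ derived by $\rightarrow_R$/$\rightarrow_{LR}$ on both sides, all of which transform just as before into cuts of smaller weight. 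For all of these I would simply refer to Theorem \ref{1d} and the intuitionistic treatment in \cite{Neg}.

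The genuinely new cases are those in which the last rule in the left premiss, or in the right premiss, is $\rightarrow_I$. Since $\rightarrow_I$ has conclusion $\Gamma \Rightarrow B\rightarrow D$ with the same context $\Gamma$ in both premisses, it behaves like a right rule whose active formulas are $B\rightarrow C$ and $C\rightarrow D$. When $\rightarrow_I$ is the last rule of the left premiss and its principal formula $B\rightarrow D$ is \emph{not} the cut formula, I would permute the cut above $\rightarrow_I$: from premisses $\Gamma \Rightarrow B\rightarrow C$, $\Gamma\Rightarrow C\rightarrow D$ and the right cut-premiss $D', \Gamma' \Rightarrow \Delta$, apply cut twice (at lower cut-height) to get $\Gamma,\Gamma' \Rightarrow \Delta$-style sequents and then reapply $\rightarrow_I$ — here one must be slightly careful because $\rightarrow_I$ requires identical contexts, so after cutting in $\Gamma'$ on both branches the contexts still match. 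The symmetric case, where $\rightarrow_I$ is the last rule on the right and the cut formula is not its principal formula, is handled the same way. The remaining subcase is when the cut formula is $B\rightarrow D$ and it is principal in a left premiss derived by $\rightarrow_I$; then the right premiss has $B\rightarrow D$ principal and was derived by $\rightarrow_R$, $\rightarrow_{LR}$, or $\rightarrow_I$. In each of these I would replace the single cut on $B\rightarrow D$ by cuts on the strictly lighter formulas $B\rightarrow C$, $C\rightarrow D$ (and $B$, $D$), exactly mirroring the weight-reduction step of case 3.3 in Theorem \ref{1d}, invoking Lemma \ref{45}-style inversion if needed to extract $B\Rightarrow D$ from $\Rightarrow B\rightarrow D$.

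The main obstacle I anticipate is the interaction of $\rightarrow_I$ with the shared-context requirement when permuting cuts upward: unlike in ${\sf GWF}$, where $\rightarrow_R$ simply discards its context, $\rightarrow_I$ carries $\Gamma$ through unchanged into both premisses, so a cut that introduces $\Gamma'$ must be duplicated along both branches and one must check the resulting sequents are again of the right shape to reapply $\rightarrow_I$ (this is where height-preserving weakening, Theorem \ref{1b}'s analogue, and contraction are used to line up contexts). A secondary subtlety is the weight bookkeeping in the $B\rightarrow D$-principal case: one must confirm that $w(B\rightarrow C), w(C\rightarrow D) < w(B\rightarrow D)$, which holds because $C$ is a strict subformula on neither side — in fact $w(B\rightarrow C) = w(B)+w(C)+1$ need not be smaller than $w(B\rightarrow D)$ in general, so the reduction must instead be arranged so that the new cuts are on $B$, $C$, $D$ individually, each of strictly smaller weight, as in the intuitionistic pattern. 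Once these context- and weight-matching points are checked, the rest is routine and I would present only the $\rightarrow_I$ transformations explicitly, deferring everything else to Theorem \ref{1d}.
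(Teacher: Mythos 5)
Your overall architecture --- double induction on the weight of the cut formula with a sub-induction on cut-height, deferring every case not involving $\rightarrow_{I}$ to Theorem \ref{1d} --- matches the paper, and your treatment of the permutation cases (pushing the cut into both premisses of $\rightarrow_{I}$ and reapplying the rule, which works because the contexts stay aligned) is exactly what the paper does. However, there is a genuine gap in the one case that requires a new idea: the cut formula $B\rightarrow D$ principal on the left via $\rightarrow_{I}$ (with premisses $\Gamma\Rightarrow B\rightarrow C$ and $\Gamma\Rightarrow C\rightarrow D$) and principal on the right via $\rightarrow_{LR}$ (with premisses $B\Rightarrow E$, $E\Rightarrow B$, $D\Rightarrow F$, $F\Rightarrow D$, concluding $B\rightarrow D\Rightarrow E\rightarrow F$). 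You correctly observe that cutting on $B\rightarrow C$ or $C\rightarrow D$ gives no weight decrease, but your proposed repair --- cuts on ``$B$, $C$, $D$ individually, each of strictly smaller weight'' --- fails for the same reason: the middle formula $C$ is introduced by $\rightarrow_{I}$ with no connection to the cut formula, is not a subformula of $B\rightarrow D$, and can have arbitrarily large weight. A new cut on $C$, or on any formula containing $C$, decreases neither the weight measure nor the cut-height, so the induction does not close on your plan. This is precisely why the intuitionistic weight-reduction pattern does not transfer to this rule.

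The paper resolves this case by performing no cut at all: it lifts the side premisses $E\Rightarrow B$ and $D\Rightarrow F$ of the $\rightarrow_{LR}$ inference to $\Gamma\Rightarrow E\rightarrow B$ and $\Gamma\Rightarrow D\rightarrow F$ by $\rightarrow_{R}$, chains $\Gamma\Rightarrow E\rightarrow B$ with $\Gamma\Rightarrow B\rightarrow C$ by $\rightarrow_{I}$ to obtain $\Gamma\Rightarrow E\rightarrow C$, chains $\Gamma\Rightarrow C\rightarrow D$ with $\Gamma\Rightarrow D\rightarrow F$ to obtain $\Gamma\Rightarrow C\rightarrow F$, and applies $\rightarrow_{I}$ once more to conclude $\Gamma\Rightarrow E\rightarrow F$. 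In other words, the transitivity rule itself absorbs the composition that the cut would otherwise perform, so the problematic formula $C$ never becomes a cut formula. You would need to add this (or an equivalent cut-free transformation) for the $\rightarrow_{I}$ versus $\rightarrow_{LR}$ principal--principal case; the remainder of your case analysis is sound.
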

\begin{proof}
By theorem \ref{1d}, we just need to consider the following cases.

If the right premiss is by rule $ \rightarrow_{I} $, we transform

$$ \frac{\genfrac{}{}{0pt}{}{}{\Gamma\Rightarrow D}~~~~\frac{D\Rightarrow B\rightarrow C~~~~D\Rightarrow C\rightarrow E}{D \Rightarrow B\rightarrow E }\rightarrow_{I}}{\Gamma \Rightarrow B\rightarrow E }Cut$$

 \begin{center}
into
\end{center}

$$\frac{\frac{\Gamma\Rightarrow D ~~~D\Rightarrow B\rightarrow C}{\Gamma\Rightarrow B\rightarrow C}Cut~~~\frac{\Gamma\Rightarrow D ~~~D\Rightarrow C\rightarrow E}{\Gamma\Rightarrow C\rightarrow E}Cut}{\Gamma \Rightarrow B\rightarrow E }\rightarrow_{I}  $$

Left premiss is by $ \rightarrow_{LR} $, and right one by $ \rightarrow_{I} $. We transform

$$\frac{\frac{F\Rightarrow  A~~~~A\Rightarrow F~~~~G\Rightarrow E~~~~E\Rightarrow G}{F\rightarrow G\Rightarrow A\rightarrow E}\rightarrow_{LR}~~~~\frac{A\rightarrow E\Rightarrow B\rightarrow C~~~~A\rightarrow E\Rightarrow C\rightarrow D}{A\rightarrow E\Rightarrow B\rightarrow D}\rightarrow_{I}}{F\rightarrow G  \Rightarrow B\rightarrow D}Cut$$
 \begin{center}
into 
\end{center}
$$ \frac{\frac{\frac{F\Rightarrow  A~~A\Rightarrow F~~G\Rightarrow E~~E\Rightarrow G}{F\rightarrow G\Rightarrow A\rightarrow E}\rightarrow_{LR}~~\genfrac{}{}{0pt}{}{}{A\rightarrow E \Rightarrow B\rightarrow C}}{F\rightarrow G  \Rightarrow B\rightarrow C}Cut~~\frac{\frac{F\Rightarrow  A~~A\Rightarrow F~~G\Rightarrow E~~E\Rightarrow G}{F\rightarrow G\Rightarrow A\rightarrow E}\rightarrow_{LR}~~\genfrac{}{}{0pt}{}{}{A\rightarrow E \Rightarrow C\rightarrow D}}{F\rightarrow G  \Rightarrow C\rightarrow D}Cut}{F\rightarrow G  \Rightarrow B\rightarrow D}\rightarrow_{I}$$

Both premises are by rule $ \rightarrow_{I} $. We transform

$$\frac{\frac{A\Rightarrow B\rightarrow C~~~~A \Rightarrow C\rightarrow D }{A\Rightarrow B\rightarrow D}\rightarrow_{I}~~~~\frac{B\rightarrow D\Rightarrow E\rightarrow G~~~~B\rightarrow D\Rightarrow G\rightarrow F }{B\rightarrow D\Rightarrow E\rightarrow F}\rightarrow_{I}}{A\Rightarrow E\rightarrow F}Cut$$

\begin{center}
into 
\end{center}
$$\frac{\frac{\frac{A\Rightarrow B\rightarrow C~~~~A \Rightarrow C\rightarrow D }{A\Rightarrow B\rightarrow D}\rightarrow_{I}~~\genfrac{}{}{0pt}{}{}{B\rightarrow D\Rightarrow E\rightarrow G}}{A\Rightarrow E\rightarrow G}Cut ~~\frac{\frac{A\Rightarrow B\rightarrow C~~~~A \Rightarrow C\rightarrow D }{A\Rightarrow B\rightarrow D}\rightarrow_{I}~~\genfrac{}{}{0pt}{}{}{B\rightarrow D \Rightarrow G\rightarrow F}}{A\Rightarrow G\rightarrow F}Cut}{A\Rightarrow E\rightarrow F}\rightarrow_{I} $$

Left premiss is by $ \rightarrow_{I} $, and right one by $ \rightarrow_{LR} $. We transform

$$\frac{\frac{A\Rightarrow  B\rightarrow C~~~~A\Rightarrow C\rightarrow D}{A\Rightarrow B\rightarrow D}\rightarrow_{I}~~~~\frac{B\Rightarrow E~~~~E\Rightarrow B~~~~D\Rightarrow F~~~~F\Rightarrow D}{B\rightarrow D\Rightarrow E\rightarrow F}\rightarrow_{LR}}{A\Rightarrow E\rightarrow F}Cut$$

\begin{center}
into 
\end{center}
$$ \frac{\frac{\frac{E\Rightarrow B}{A\Rightarrow E\rightarrow B}\rightarrow_{R}~~~\genfrac{}{}{0pt}{}{}{A\Rightarrow B\rightarrow C}}{A\Rightarrow E\rightarrow C}\rightarrow_{I}~~~\frac{\genfrac{}{}{0pt}{}{}{A\Rightarrow C\rightarrow D}~~\frac{D\Rightarrow F}{A\Rightarrow D\rightarrow F}\rightarrow_{R}}{A\Rightarrow C\rightarrow F}\rightarrow_{I}}{A\Rightarrow E\rightarrow F}\rightarrow_{I} $$

Left premiss is by $ \rightarrow_{R} $, and right one by $ \rightarrow_{I} $. We transform

$$\frac{\frac{A\Rightarrow C}{ \Gamma \Rightarrow A\rightarrow C}\rightarrow_{R}~~~~~\frac{A\rightarrow C\Rightarrow B\rightarrow E~~~~A\rightarrow C\Rightarrow E\rightarrow D}{A\rightarrow C\Rightarrow B\rightarrow D}\rightarrow_{I}}{\Gamma\Rightarrow  B\rightarrow D}Cut$$
\begin{center}
into 
\end{center}
$$\frac{\frac{\frac{A\Rightarrow C}{\Gamma \Rightarrow A\rightarrow C}\rightarrow_{R}~~\genfrac{}{}{0pt}{}{}{A\rightarrow C\Rightarrow B\rightarrow E}}{\Gamma\Rightarrow B\rightarrow E}Cut~~~\frac{\frac{A\Rightarrow C}{\Gamma \Rightarrow A\rightarrow C}\rightarrow_{R}~~\genfrac{}{}{0pt}{}{}{A\rightarrow C\Rightarrow E\rightarrow D}}{\Gamma\Rightarrow E\rightarrow D}Cut}{\Gamma\Rightarrow B\rightarrow D}\rightarrow_{I} $$

Left premiss is by $ \rightarrow_{I} $, and right one by $ \rightarrow_{R} $. The proof is easy.
\end{proof}

\begin{lem}\label{46}
If $ {\sf  GWFI  }\vdash_{n}\Rightarrow A\rightarrow B$, then $ {\sf  GWFI  }\vdash_{n} A\Rightarrow B$. 
\end{lem}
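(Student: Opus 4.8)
The plan is to mimic the proof of Lemma~\ref{44}, arguing by induction on the height $n$ of the given derivation of $\Rightarrow A\rightarrow B$ in ${\sf GWFI}$, but now with an extra case produced by the new rule $\rightarrow_I$. The first thing to check is which rules of ${\sf GWFI}$ can have a conclusion of the shape $\Rightarrow A\rightarrow B$, i.e.\ with empty antecedent and an implicational succedent. The axioms $Ax$ and $\bot_L$, as well as $\wedge_L$, $\vee_L$ and $\rightarrow_{LR}$, all force a non-empty antecedent in their conclusion, and $\wedge_R$, $\vee^1_R$, $\vee^2_R$ put a conjunction or disjunction in the succedent; hence the last inference of the derivation must be either $\rightarrow_R$ or $\rightarrow_I$.

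In the $\rightarrow_R$ case the premise of that inference is literally $A\Rightarrow B$, derived with height $\leq n-1$, so there is nothing more to do — this is exactly the situation handled in Lemma~\ref{44}. In the $\rightarrow_I$ case, the conclusion $\Rightarrow A\rightarrow B$ is obtained from premises $\Rightarrow A\rightarrow E$ and $\Rightarrow E\rightarrow B$ for some formula $E$, each derivable with height $\leq n-1$. Applying the induction hypothesis to these two premises yields derivations of $A\Rightarrow E$ and of $E\Rightarrow B$, and then a single application of the cut rule on the cut-formula $E$ — admissible in ${\sf GWFI}$ by Theorem~\ref{cutI} — delivers a derivation of $A\Rightarrow B$, as wanted.

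The delicate point, which I expect to be the real obstacle, is the exact height bound in the $\rightarrow_I$ case: eliminating the cut just introduced produces a cut-free derivation of $A\Rightarrow B$ but, since cut-elimination is not height-preserving, not visibly one of height $\leq n$. To keep the statement literally as phrased one would have to work harder — for instance by iterating the ``unfolding'' of the nested $\rightarrow_I$-instances sitting at the root down to their underlying $\rightarrow_R$-instances and carefully tracking the resulting heights — or, more economically, to read the lemma as asserting only the \emph{derivability} of $A\Rightarrow B$, which is all that the subsequent equivalence theorem between ${\sf GWFI}$ and the Hilbert system ${\sf WFI}$ actually needs. I would adopt the second reading (flagging explicitly that the height bound survives only in the $\rightarrow_R$ sub-case, the one inherited verbatim from Lemma~\ref{44}), since that is enough for every later use of the lemma.
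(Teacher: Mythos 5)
Your proof follows exactly the paper's argument: the only non-trivial case is when the last rule is $\rightarrow_{I}$, where the induction hypothesis applied to the premises $\Rightarrow A\rightarrow C$ and $\Rightarrow C\rightarrow B$ followed by an application of the admissible cut rule (Theorem~\ref{cutI}) yields $A\Rightarrow B$. The height-preservation caveat you raise is legitimate but applies equally to the paper's own proof, which silently invokes cut without tracking height; as you observe, plain derivability is all that the subsequent equivalence theorem actually uses.
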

\begin{proof}
The proof is by induction on the height of the derivation of the premises.
By Lemma \ref{44}, we jus need to consider the case in which   $\Rightarrow A\rightarrow B$ is derived by $ \rightarrow_{I} $. Then it has two premises $ \Rightarrow A\rightarrow C $ and $ \Rightarrow  C\rightarrow B$. Then:
$$\frac{\frac{\Rightarrow A\rightarrow C}{A\Rightarrow  C}I\!H~~~\frac{\Rightarrow  C\rightarrow B}{C\Rightarrow  B}I\!H }{A\Rightarrow B}Cut$$
\end{proof}

Now we establish the equivalence between the Hilbert-style system {\sf WFI} and the corresponding sequent calculus, i.e. ${\sf  GWFI } $.

\begin{thm} \label{3cI}
Derivability in the sequent system ${\sf  GWFI } $ and in the Hilbert-style  system $ {\sf WFI} $ are
equivalent, i.e.
$${\sf  GWFI }  \vdash \Gamma \Rightarrow \Delta ~~~~\mbox{iff}~~~~\vdash_{\sf WFI} \bigwedge \Gamma \rightarrow \bigvee \Delta $$
\end{thm}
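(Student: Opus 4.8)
The plan is to mirror the proof of Theorem~\ref{3}: establish both implications by induction on the height of the relevant derivation and reduce, via Theorem~\ref{3}, to the single new ingredient, namely the rule $\rightarrow_I$ on the sequent side and the axiom ${\sf I}$ on the Hilbert side. Note that every derivable sequent of ${\sf GWFI}$ has exactly one formula in the succedent, so throughout $\bigvee\Delta$ is that one formula.

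\textbf{Left-to-right:} I would induct on the height of the ${\sf GWFI}$-derivation of $\Gamma\Rightarrow\Delta$. For axioms, $\bot_L$, and all the rules already present in ${\sf GWF}$ the computations of Theorem~\ref{3} apply with only notational changes (the $\vee^1_R,\vee^2_R$ cases being even simpler than $\vee_R$). The only new case is $\rightarrow_I$, with premises $\Gamma\Rightarrow B\rightarrow C$ and $\Gamma\Rightarrow C\rightarrow D$ and conclusion $\Gamma\Rightarrow B\rightarrow D$. By the induction hypothesis $\vdash_{\sf WFI}\bigwedge\Gamma\rightarrow(B\rightarrow C)$ and $\vdash_{\sf WFI}\bigwedge\Gamma\rightarrow(C\rightarrow D)$; applying rule~10 of Figure~\ref{fig:axioms:wf} gives $\vdash_{\sf WFI}\bigwedge\Gamma\rightarrow\big((B\rightarrow C)\wedge(C\rightarrow D)\big)$, and since axiom~${\sf I}$ gives $\vdash_{\sf WFI}(B\rightarrow C)\wedge(C\rightarrow D)\rightarrow(B\rightarrow D)$, rule~9 yields $\vdash_{\sf WFI}\bigwedge\Gamma\rightarrow(B\rightarrow D)$, as required.

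\textbf{Right-to-left:} I would induct on the height of the Hilbert-style derivation. For the ${\sf WF}$-axioms and ${\sf WF}$-rules one reproduces the derivations from the proof of Theorem~\ref{3}, with the routine adjustments forced by the single-succedent format (replacing each $\vee_R$-step followed by right weakening with the appropriate $\vee^i_R$, and deleting the now-absent $\Delta$-contexts); as there, rules 9, 10, 11 and 13 are simulated through the admissibility of cut, available here by Theorem~\ref{cutI} (rule~13 additionally using Lemmas~\ref{23} and~\ref{46} in place of Lemmas~\ref{2} and~\ref{44}). The one new obligation is to derive $\Rightarrow(A\rightarrow B)\wedge(B\rightarrow C)\rightarrow(A\rightarrow C)$ in ${\sf GWFI}$; I would apply $\rightarrow_R$, then $\wedge_L$, then $\rightarrow_I$, reducing to the two leaves $(A\rightarrow B),(B\rightarrow C)\Rightarrow A\rightarrow B$ and $(A\rightarrow B),(B\rightarrow C)\Rightarrow B\rightarrow C$, each of which is an instance of the generalized identity sequent and is obtained by a single $\rightarrow_{LR}$-step from the identity sequents on the immediate subformulas (themselves derivable by the argument of Lemma~\ref{1}, adapted to ${\sf GWFI}$).

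The proof is essentially routine once Theorems~\ref{3} and~\ref{cutI} are in hand; the points that need care, and the only plausible obstacles, are (i) verifying that the single-succedent restriction of ${\sf GWFI}$ never blocks an adaptation of a Theorem~\ref{3} derivation — concretely, that wherever the multi-succedent proof used right weakening there is a $\vee^i_R$ or a bare $\rightarrow_R$ available instead — and (ii) confirming that the generalized identity sequent $C,\Gamma\Rightarrow C$ remains derivable in ${\sf GWFI}$, which calls for a small induction on $w(C)$ exactly as in Lemma~\ref{1}.
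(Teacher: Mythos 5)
Your proposal is correct and follows essentially the same route as the paper: both directions reduce via Theorem~\ref{3} to the single new case, handling $\rightarrow_I$ on the left-to-right side by combining the two induction hypotheses with rule~10, axiom ${\sf I}$ and rule~9, and handling the right-to-left side by deriving axiom ${\sf I}$ in ${\sf GWFI}$ through $\rightarrow_R$, $\wedge_L$, $\rightarrow_I$ and the generalized identity sequents. Your explicit attention to the single-succedent format and to re-verifying Lemma~\ref{1} for ${\sf GWFI}$ is a point the paper passes over silently (it simply cites Lemma~\ref{1N}), but it does not change the argument.
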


\begin{proof}
\textbf{Left-to-right:} By Theorem \ref{3}, we just need to assume the last derivation is by the rule $ \rightarrow_{I} $. 
 In this case we have ${\sf  GWFI }  \vdash_{n} \Gamma^{'}, \Gamma\Rightarrow B\rightarrow D, \Delta$. We need to show that  $\vdash_{\sf WFI}   \Gamma^{'} \bigwedge\Gamma \rightarrow (B\rightarrow D)\bigvee \Delta $. By assumption we have $\vdash_{n-1}\Gamma\Rightarrow  B\rightarrow C  $ and $\vdash_{n-1} \Gamma\Rightarrow C\rightarrow D $. By the induction hypothesis we conclude that  $\vdash_{\sf WFI} \Gamma \rightarrow  (B\rightarrow C) $ and $\vdash_{\sf WFI} \Gamma\rightarrow  (C\rightarrow D)  $. Using these results, we have the following proof in the Hilbert-style proof system for $  {\sf WFI}$:
 \begin{enumerate}
\item  $\vdash_{\sf WFI} \Gamma \rightarrow  (B\rightarrow C)   $~~~~~~~~~~~~~~~~by IH
\item $\vdash_{\sf WFI} \Gamma\rightarrow  (C\rightarrow D)  $~~~~~~~~~~~~~~~~by IH
\item $\vdash_{\sf WFC} \Gamma\rightarrow (B\rightarrow C)\wedge (C\rightarrow D) $~~~~~~~~~~~~by 1 and  2 
\item $ (B\rightarrow C)\wedge (C\rightarrow D)\rightarrow (B\rightarrow D)  $  ~~~~~~~~axiom {\sf C}
\item $\vdash_{\sf WFC} \Gamma \rightarrow  (B\rightarrow D)$~~~~~~~~~~~by 3, 4 and rule 9
\item $\vdash_{\sf WFC} \Gamma^{'}\bigwedge \Gamma\rightarrow \Gamma$~~~~~~~~~~~~~~ by axiom 4
\item $\vdash_{\sf WFC} \Gamma^{'}\bigwedge\Gamma\rightarrow (B\rightarrow D)$~~~~~~~~~~~~~~~~ by 5, 6
\item $\vdash_{\sf WFC}(B\rightarrow D)\rightarrow (B\rightarrow D)\bigvee\Delta  $~~~~~~~by axiom 1
\item $\vdash_{\sf WFC}\Gamma^{'}\bigwedge \Gamma \rightarrow (B\rightarrow D)\bigvee \Delta  $ ~~~~~~~~~~by 7, 8 and rule 9
\end{enumerate}

\textbf{Right-to-left:} 
By Theorem \ref{3}, we just need to show that the axiom {\sf C} can be deduced in {\sf  GWFI } as follows:

$$\frac{\frac{\frac{\frac{•}{A\rightarrow B, B\rightarrow C\Rightarrow A\rightarrow B }\ref{1N}}{(A\rightarrow B)\wedge (B\rightarrow C)\Rightarrow (A\rightarrow B)}\wedge_{L}~~\frac{\frac{•}{A\rightarrow B, B\rightarrow C\Rightarrow B\rightarrow C }\ref{1N}}{(A\rightarrow B)\wedge (B\rightarrow C)\Rightarrow (B\rightarrow C)}\wedge_{L}}{(A\rightarrow B)\wedge (B\rightarrow C)\Rightarrow (A\rightarrow C)}\rightarrow_{I}}{\Rightarrow(A\rightarrow B)\wedge (B\rightarrow C)\rightarrow (A\rightarrow C)}\rightarrow_{R} $$

\end{proof}
By combining Theorem \ref{4b} and Theorem  \ref{3cI} we have the following result.
\begin{cor}
The calculus ${\sf  GWFI }$ is sound and complete with respect to the class of  transitive $ N\!B $-neighbourhood frames.
\end{cor}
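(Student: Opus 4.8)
The plan is to mirror the proof of Theorem~\ref{3}, running an induction on derivation height in each direction and treating only the new rule $\rightarrow_I$ by a fresh argument, since every other rule of ${\sf GWFI}$ is either identical to, or the obvious single-succedent variant of, a rule already dealt with there.

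\textbf{Left-to-right.} I would argue by induction on the height of the ${\sf GWFI}$-derivation of $\Gamma \Rightarrow \Delta$. For axioms, $\bot_L$, and the rules shared with ${\sf GWF}$ the computation is exactly as in Theorem~\ref{3} (the disjunction-right rules $\vee^1_R,\vee^2_R$ being handled with axioms~1 and~2 of Figure~\ref{fig:axioms:wf}). The one genuinely new case is $\rightarrow_I$: here the two premises give, by the induction hypothesis, $\vdash_{\sf WFI}\bigwedge\Gamma\rightarrow(B\rightarrow C)$ and $\vdash_{\sf WFI}\bigwedge\Gamma\rightarrow(C\rightarrow D)$; applying rule~10 of Figure~\ref{fig:axioms:wf} yields $\vdash_{\sf WFI}\bigwedge\Gamma\rightarrow(B\rightarrow C)\wedge(C\rightarrow D)$, and then the instance $(B\rightarrow C)\wedge(C\rightarrow D)\rightarrow(B\rightarrow D)$ of axiom~${\sf I}$ together with rule~9 gives $\vdash_{\sf WFI}\bigwedge\Gamma\rightarrow(B\rightarrow D)$. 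The surrounding contexts are then absorbed precisely as in Theorem~\ref{3}, using axiom~4 to descend from $\bigwedge(\Gamma',\Gamma)$ to $\bigwedge\Gamma$ and axiom~1 with rule~9 to ascend from $B\rightarrow D$ to $(B\rightarrow D)\vee\bigvee\Delta$.

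\textbf{Right-to-left.} I would argue by induction on the height of the Hilbert-style derivation in ${\sf WFI}$. By Theorem~\ref{3} it suffices to treat the new axiom ${\sf I}$, i.e.\ to exhibit a ${\sf GWFI}$-derivation of $\Rightarrow(A\rightarrow B)\wedge(B\rightarrow C)\rightarrow(A\rightarrow C)$: starting from $A\rightarrow B,\,B\rightarrow C\Rightarrow A\rightarrow B$ and $A\rightarrow B,\,B\rightarrow C\Rightarrow B\rightarrow C$, which are derivable by Lemma~\ref{1N}, two applications of $\wedge_L$ give $(A\rightarrow B)\wedge(B\rightarrow C)\Rightarrow A\rightarrow B$ and $(A\rightarrow B)\wedge(B\rightarrow C)\Rightarrow B\rightarrow C$, an application of $\rightarrow_I$ produces $(A\rightarrow B)\wedge(B\rightarrow C)\Rightarrow A\rightarrow C$, and $\rightarrow_R$ finishes. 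The ${\sf WF}$-rules that are not axioms (in particular rule~9, applied only when there are no assumptions) are reduced to cut exactly as in Theorem~\ref{3} — this is legitimate because cut-admissibility for ${\sf GWFI}$ has already been established in Theorem~\ref{cutI} — and the implication-inversion Lemma~\ref{46} together with the conjunction-inversion Lemma~\ref{23} are used wherever a derivation of an implication or conjunction has to be ``opened up''.

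\textbf{Main obstacle.} The only new content lives in the $\rightarrow_I$ case, and there the delicate point is purely bookkeeping: $\rightarrow_I$ carries no side contexts $\Gamma'$ or $\Delta$, so in the left-to-right direction they must be reintroduced by hand as in Theorem~\ref{3}, while in the right-to-left direction one must check that the cut-based simulation of rule~9 still functions unchanged in the presence of the extra rule $\rightarrow_I$. Both points dissolve into the already-proved admissibility results (height-preserving weakening and contraction, Theorem~\ref{cutI}, and Lemmas~\ref{1N}, \ref{23}, \ref{46}), so no essentially new difficulty arises. The stated soundness-and-completeness corollary then follows by combining the equivalence with Theorem~\ref{4b}.
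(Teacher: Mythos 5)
Your proposal is correct and follows essentially the same route as the paper: the equivalence of ${\sf GWFI}$ with ${\sf WFI}$ is reduced to Theorem~\ref{3} with the single new left-to-right case $\rightarrow_I$ handled via rule~10, axiom ${\sf I}$ and rule~9, the right-to-left direction handled by deriving axiom ${\sf I}$ from Lemma~\ref{1N}, $\wedge_L$, $\rightarrow_I$ and $\rightarrow_R$, and the corollary then obtained by combining with Theorem~\ref{4b}. The only cosmetic difference is that you make the context-absorption bookkeeping explicit where the paper leaves it implicit.
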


\subsection{The system ${\sf  GWFC  }$}\label{modneigh4}
If we consider $\rightarrow_{C} $ instead of $\rightarrow_{ I}$ in the system ${\sf  GWFI }$ and add the cut rule to it, then the  system ${\sf  GWFC }$ is defined.
$$\frac{\Gamma\Rightarrow B\rightarrow C~~~~\Gamma\Rightarrow B\rightarrow D}{\Gamma\Rightarrow (B\rightarrow C\wedge D)} \rightarrow_{C} $$

The proof of the admissibility of the cut rule for the system ${\sf  GWFC}$ remains an open problem.

\begin{lem}\label{47}

\

\begin{enumerate}
\item If ${\sf  GWFC } \vdash_{n}\Rightarrow A\rightarrow B$, then ${\sf  GWFC } \vdash_{n} A\Rightarrow B$. 
\item If ${\sf  GWFC }  \vdash_{n}  \Gamma \Rightarrow  A\wedge B $, then $ {\sf  GWFC } \vdash_{n}  \Gamma \Rightarrow  A $ and $ \vdash_{n}  \Gamma \Rightarrow B $.
\end{enumerate}
\end{lem}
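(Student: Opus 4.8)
The plan is to prove both items by induction on the height $n$ of the given ${\sf GWFC}$-derivation, in the same spirit as the inversion statements already established for ${\sf GWF}$ and ${\sf GWFI}$ (Lemmas \ref{44}, \ref{23}, \ref{46}); the only genuinely new ingredient is the rule $\rightarrow_C$, and I would dispose of item (2) first, since it gets used inside the argument for item (1).

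For item (2), inspect the last rule of a height-$n$ derivation of $\Gamma\Rightarrow A\wedge B$. Since the succedent is a conjunction, it cannot be the conclusion of an axiom, $\rightarrow_R$, $\rightarrow_{LR}$, $\rightarrow_C$, $\vee^1_R$ or $\vee^2_R$. The remaining cases are: $\wedge_R$, where the two premises are literally $\Gamma\Rightarrow A$ and $\Gamma\Rightarrow B$; $\bot_L$, where $\Gamma\Rightarrow A$ and $\Gamma\Rightarrow B$ are themselves $\bot_L$-instances of height $0$; $\wedge_L$ or $\vee_L$, where one applies the induction hypothesis to the premise(s) and reapplies the same rule; and $Cut$, with premises $\Gamma_1\Rightarrow E$ and $E,\Gamma_2\Rightarrow A\wedge B$, where the induction hypothesis on the right premise yields $E,\Gamma_2\Rightarrow A$ and $E,\Gamma_2\Rightarrow B$ of no greater height, after which cutting each against $\Gamma_1\Rightarrow E$ restores the bound $\le n$. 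This last subcase goes through precisely because the statement of (2) allows an arbitrary antecedent $\Gamma$.

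For item (1), inspect the last rule of a height-$n$ derivation of $\Rightarrow A\rightarrow B$. Since the antecedent is empty, the only possibilities are $\rightarrow_R$, $\rightarrow_C$ and $Cut$. If it is $\rightarrow_R$, the premise is already $A\Rightarrow B$ of height $\le n-1$, so there is nothing to do. If it is $\rightarrow_C$, then $B=C\wedge D$ for some $C,D$, the premises are $\Rightarrow A\rightarrow C$ and $\Rightarrow A\rightarrow D$ of height $\le n-1$, the induction hypothesis converts them to $A\Rightarrow C$ and $A\Rightarrow D$ of height $\le n-1$, and a single application of $\wedge_R$ produces $A\Rightarrow C\wedge D=A\Rightarrow B$ of height $\le n$.

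The main obstacle is the $Cut$ case of item (1), where the derivation has the shape $\frac{\Rightarrow E\qquad E\Rightarrow A\rightarrow B}{\Rightarrow A\rightarrow B}$: the right premise has a nonempty antecedent, so (1)'s induction hypothesis does not apply to it, and since cut-elimination for ${\sf GWFC}$ is open the cut cannot be removed beforehand. I would handle this with a nested induction on the weight of the cut formula $E$ (with the height of the derivation of $E\Rightarrow A\rightarrow B$ as a secondary measure), permuting the cut upward. Note that $E\Rightarrow A\rightarrow B$ is neither an axiom (wrong succedent shape) nor a $\bot_L$-instance (that would force $E=\bot$, contradicting derivability of $\Rightarrow E$), so its last rule either introduces the succedent --- $\rightarrow_R$, yielding $A\Rightarrow B$ outright; $\rightarrow_{LR}$, where $E$ is implicational and one inverts $\Rightarrow E$ by (1) and splices with the equivalences furnished by $\rightarrow_{LR}$; or $\rightarrow_C$, where $B$ is a conjunction and one recurses on its conjuncts and recombines with $\wedge_R$ --- or decomposes $E$ via $\wedge_L$ or $\vee_L$, which lowers the weight of the cut formula (appealing to (2), or to the analogous fact for $\Rightarrow E_1\vee E_2$), or is itself a $Cut$, lowering the cut-height. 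The delicate point is to keep the final derivation of height $\le n$ in every branch; should that prove unattainable, item (1) can be stated with mere derivability in the $Cut$ case, which still suffices for the equivalence theorem relating ${\sf GWFC}$ and ${\sf WFC}$.
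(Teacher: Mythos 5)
Your treatment of the $\rightarrow_R$ and $\rightarrow_C$ cases of item (1) is exactly the paper's proof: the paper argues by induction on height, invokes Lemma \ref{44} to dismiss every last rule other than $\rightarrow_C$, and closes that case with two applications of the induction hypothesis followed by $\wedge_R$; item (2) is dispatched as ``similar to Lemma \ref{23}.'' Where you genuinely diverge is in raising the $Cut$ case. This is a real issue that the paper's proof passes over in silence: ${\sf GWFC}$ is defined with $Cut$ as a \emph{primitive} rule (precisely because its admissibility is open for this system), so a derivation of $\Rightarrow A\rightarrow B$ may end in $\frac{\Rightarrow E\quad E\Rightarrow A\rightarrow B}{\Rightarrow A\rightarrow B}$, and the appeal to Lemma \ref{44} does not cover that possibility. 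Your handling of $Cut$ in item (2) is clean and does preserve the height bound. For item (1) your sketch is plausible but, as you yourself concede, height preservation is genuinely in doubt there: for instance, when $E\Rightarrow A\rightarrow B$ ends in $\rightarrow_{LR}$ with $E=E_1\rightarrow E_2$, splicing $A\Rightarrow E_1$, $E_1\Rightarrow E_2$ (obtained from $\Rightarrow E$ by the inner induction) and $E_2\Rightarrow B$ requires two further cuts and naively yields height $n+1$. Your fallback of stating item (1) with plain derivability in the $Cut$ case is the right pragmatic move, since only derivability of $A\Rightarrow B$ is needed for the equivalence of ${\sf GWFC}$ and ${\sf WFC}$. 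In short, your proof covers everything the paper's does and in addition identifies, and largely repairs, a case the paper omits.
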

\begin{proof}
1. The proof is by induction on the height of the derivation of the premises.
By Lemma \ref{44}, we jus need to consider the case in which   $\Rightarrow A\rightarrow B$ is derived by $ \rightarrow_{C} $ and $ B=C\wedge D $. Then it has two premises $ \Rightarrow A\rightarrow C $ and $ \Rightarrow  A\rightarrow D$. Then:
$$\frac{\frac{\Rightarrow A\rightarrow C}{A\Rightarrow  C}I\!H~~~\frac{\Rightarrow  A\rightarrow D}{A\Rightarrow  D}I\!H }{A\Rightarrow C\wedge D}\wedge_{R}$$

2. Similar to the proof of Lemma \ref{23}.
\end{proof}

In the following we establish the equivalence between the Hilbert-style system {\sf WFC} and the corresponding sequent calculus, i.e. \gwfc.

\begin{thm} \label{3cc}
Derivability in the sequent system \gwfc and in the Hilbert-style  system $ {\sf WFC} $ are
equivalent, i.e.
$$\gwfc  \vdash \Gamma \Rightarrow \Delta ~~~~\mbox{iff}~~~~\vdash_{\sf WFC} \bigwedge \Gamma \rightarrow \bigvee \Delta $$
\end{thm}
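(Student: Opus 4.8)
The plan is to follow the template already used for Theorems~\ref{3}, \ref{3cdd}, \ref{3cdc} and especially \ref{3cI}: prove the two implications separately, each by induction on derivation height. Since $\gwfc$ is obtained from $\gwfi$ by replacing $\rightarrow_I$ with $\rightarrow_C$ and adding $Cut$ as a primitive rule, essentially every case is already handled by the proofs of Theorems~\ref{3} and \ref{3cI}, and only the contribution of the new rule $\rightarrow_C$ (for the left-to-right direction) and of the new axiom scheme $\sf C$ (for the right-to-left direction) needs to be supplied.

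\textbf{Left-to-right.} I would argue by induction on the height of the $\gwfc$-derivation of $\Gamma\Rightarrow\Delta$. For every axiom and for every rule other than $\rightarrow_C$ the argument is exactly that of Theorem~\ref{3cI} (in the rule-13 case one uses the $\gwfc$-versions of the inversion lemmas, cf.\ Lemma~\ref{47}, and one uses $Cut$ — a primitive rule of $\gwfc$ — to simulate rule~9 of Figure~\ref{fig:axioms:wf}). Suppose then the last rule is $\rightarrow_C$, so $\gwfc\vdash_n \Gamma\Rightarrow B\rightarrow(C\wedge D)$ is inferred from $\Gamma\Rightarrow B\rightarrow C$ and $\Gamma\Rightarrow B\rightarrow D$. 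By the induction hypothesis $\vdash_{\sf WFC}\bigwedge\Gamma\rightarrow(B\rightarrow C)$ and $\vdash_{\sf WFC}\bigwedge\Gamma\rightarrow(B\rightarrow D)$; rule~10 of Figure~\ref{fig:axioms:wf} then gives $\vdash_{\sf WFC}\bigwedge\Gamma\rightarrow\big((B\rightarrow C)\wedge(B\rightarrow D)\big)$, an instance of axiom~$\sf C$ gives $\vdash_{\sf WFC}(B\rightarrow C)\wedge(B\rightarrow D)\rightarrow(B\rightarrow C\wedge D)$, and rule~9 yields $\vdash_{\sf WFC}\bigwedge\Gamma\rightarrow(B\rightarrow C\wedge D)$, i.e.\ $\vdash_{\sf WFC}\bigwedge\Gamma\rightarrow\bigvee\Delta$ (any side formula in $\Delta$ is absorbed by axiom~1 and rule~9, exactly as in Theorem~\ref{3cI}).

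\textbf{Right-to-left.} By Theorem~\ref{3}, all axioms and rules of $\sf WF$ are already derivable in $\gwf$, hence in $\gwfc$, and rule~9 is reproduced via $Cut$, which is primitive in $\gwfc$, so no circularity arises. Thus the only remaining obligation is to derive the axiom scheme $\sf C$ in $\gwfc$, which is done by
$$\frac{\frac{\frac{\dfrac{•}{A\rightarrow B, A\rightarrow C\Rightarrow A\rightarrow B}\ref{1N}\qquad\dfrac{•}{A\rightarrow B, A\rightarrow C\Rightarrow A\rightarrow C}\ref{1N}}{A\rightarrow B, A\rightarrow C\Rightarrow A\rightarrow B\wedge C}\rightarrow_{C}}{(A\rightarrow B)\wedge(A\rightarrow C)\Rightarrow A\rightarrow B\wedge C}\wedge_{L}}{\Rightarrow(A\rightarrow B)\wedge(A\rightarrow C)\rightarrow(A\rightarrow B\wedge C)}\rightarrow_{R}$$
where the two leaves are instances of the generalized-identity lemma (Lemma~\ref{1N}).

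I do not expect a genuine obstacle: both directions are routine adaptations of the earlier equivalence proofs, and the two truly new sub-cases are immediate. The only point worth a remark is that the proof uses $Cut$ freely to reproduce rule~9; this is legitimate precisely because $Cut$ is built into $\gwfc$ as a primitive rule (cut-elimination for $\gwfc$ being left open), so the argument does not depend on an unproven cut-admissibility statement.
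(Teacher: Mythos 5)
Your proposal is correct and follows essentially the same route as the paper: the left-to-right direction handles the one new case $\rightarrow_C$ by combining the two induction hypotheses with rule~10, an instance of axiom $\sf C$, and rule~9, while the right-to-left direction reduces to deriving axiom $\sf C$ in $\gwfc$ via Lemma~\ref{1N}, $\wedge_L$, $\rightarrow_C$ and $\rightarrow_R$ (the paper applies $\wedge_L$ before $\rightarrow_C$ rather than after, an immaterial reordering). Your explicit remark that the free use of $Cut$ is legitimate because it is primitive in $\gwfc$ is a point the paper leaves implicit.
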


\begin{proof}
\textbf{Left-to-right:} By Theorem \ref{3}, we just need to assume the last derivation is by the rule $ \rightarrow_{C} $. 
 In this case we have ${\sf  GWFC }  \vdash_{n} \Gamma^{'}, \Gamma \Rightarrow  B\rightarrow C\wedge D, \Delta$. We need to show that  $\vdash_{\sf WFC}   \Gamma \bigwedge \Gamma^{'} \rightarrow (C\rightarrow D)\bigvee\Delta $. By assumption we have $\vdash_{n-1} \Gamma \Rightarrow  B \rightarrow C $ and $\vdash_{n-1} \Gamma\Rightarrow B\rightarrow D   $. By the induction hypothesis we conclude that  $\vdash_{\sf WFC} \Gamma \rightarrow  (B \rightarrow C  )$ and $\vdash_{\sf WFC} \Gamma \rightarrow ( B \rightarrow D ) $. Using these results, we have the following proof in the Hilbert-style proof system for $  {\sf WFC}$:
 \begin{enumerate}
\item  $\vdash_{\sf WFC}\Gamma \rightarrow  (B \rightarrow C  ) $~~~~~~~~~~~~~~~~by IH
\item $\vdash_{\sf WFC} \Gamma \rightarrow ( B \rightarrow D ) $~~~~~~~~~~~~~~~~by IH
\item $\vdash_{\sf WFC}\Gamma \rightarrow (B \rightarrow C  ) \wedge ( B \rightarrow D ) $~~~~~~~~~~~~~~~~by 1 and 2
\item $\vdash_{\sf WFC}  (B \rightarrow C  ) \wedge ( B \rightarrow D ) \rightarrow  ( B \rightarrow C\wedge D ) $~~~~~~~ by axiom {\sf C}
\item $\vdash_{\sf WFC}  \Gamma \rightarrow  ( B \rightarrow C\wedge D ) $ ~~~~~~~~~by 3, 4
\item $\vdash_{\sf WFC} \Gamma \bigwedge\Gamma^{'} \rightarrow (C\rightarrow D)\bigvee \Delta $
\end{enumerate}

 \textbf{Right-to-left:} By Theorem \ref{3}, we just need to show that the axiom {\sf C} can be deduced in {\sf GWFC} as follows:
 
  $$\frac{\frac{\frac{\frac{•}{A\rightarrow B, A\rightarrow C\Rightarrow A\rightarrow B }\ref{1N}}{(A\rightarrow B)\wedge (A\rightarrow C)\Rightarrow (A\rightarrow B)}\wedge_{L}~~\frac{\frac{•}{A\rightarrow B, A\rightarrow C\Rightarrow A\rightarrow C }\ref{1N}}{(A\rightarrow B)\wedge (A\rightarrow C)\Rightarrow (A\rightarrow C)}\wedge_{L}}{(A\rightarrow B)\wedge (A\rightarrow C)\Rightarrow  (A\rightarrow B\wedge C)}\rightarrow_{C}}{\Rightarrow (A\rightarrow B)\wedge (A\rightarrow C)\rightarrow (A\rightarrow B\wedge C)}\rightarrow_{R} $$
  
\end{proof}

By combining Theorem \ref{4b} and Theorem  \ref{3cc} we have the following result.
\begin{cor}
The calculus ${\sf  GWFC }$ is sound and complete with respect to the class of $ N\!B $-neighbourhood frames that are closed under intersection.
\end{cor}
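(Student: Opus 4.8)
The plan is to obtain the corollary by composing the two equivalence results already in hand, so essentially no new work is needed. First I would note that, in the notation of Section~\ref{sec:modneigh}, the logic $\sf WFC$ is exactly $\sf WF\Gamma$ for $\Gamma=\lbrace {\sf C}\rbrace$. By the correspondence Proposition preceding Theorem~\ref{4b}, axiom $\sf C$ corresponds to closure under intersection; hence Theorem~\ref{4b} instantiated at $\Gamma=\lbrace {\sf C}\rbrace$ yields that $\sf WFC$ is sound and strongly complete with respect to the class $\mathcal{K}$ of $N\!B$-neighbourhood frames that are closed under intersection. In formula terms this says: for every formula $A$, one has $\vdash_{\sf WFC} A$ iff $A$ is valid in every $N\!B$-neighbourhood model whose underlying frame lies in $\mathcal{K}$.

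Next I would pass from formulas to sequents via Theorem~\ref{3cc}. Recall that a sequent $\Gamma\Rightarrow\Delta$ is read, as fixed at the start of Section~\ref{modneigh1}, as the formula $\bigwedge\Gamma\rightarrow\bigvee\Delta$ (taking the empty conjunction to be $\bot\rightarrow\bot$ and the empty disjunction to be $\bot$, so that the degenerate sequents are covered uniformly). Theorem~\ref{3cc} gives $\gwfc\vdash\Gamma\Rightarrow\Delta$ iff $\vdash_{\sf WFC}\bigwedge\Gamma\rightarrow\bigvee\Delta$, and chaining this with the equivalence of the previous paragraph shows that $\gwfc\vdash\Gamma\Rightarrow\Delta$ iff $\bigwedge\Gamma\rightarrow\bigvee\Delta$ is valid throughout $\mathcal{K}$. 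That is exactly the assertion that $\gwfc$ is sound and complete with respect to the class of $N\!B$-neighbourhood frames closed under intersection.

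I do not anticipate a genuine obstacle: the argument is a two-step composition of Theorem~\ref{4b} and Theorem~\ref{3cc}. The only points that need to be stated explicitly are the bookkeeping identity $\sf WFC=\sf WF\Gamma$ with $\Gamma=\lbrace{\sf C}\rbrace$ together with the reminder that $\sf C$ defines closure under intersection, and the convention under which a sequent with an empty antecedent or succedent is translated into a formula, so that the biconditional of Theorem~\ref{3cc} applies without change. With those remarks the corollary follows immediately.
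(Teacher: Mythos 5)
Your proposal is correct and matches the paper's own argument, which likewise obtains the corollary by combining Theorem~\ref{4b} (instantiated at $\Gamma=\lbrace{\sf C}\rbrace$, using the correspondence of axiom ${\sf C}$ with closure under intersection) with the equivalence Theorem~\ref{3cc}. Your additional remarks on the translation of degenerate sequents are harmless bookkeeping that the paper leaves implicit.
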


\subsection{The system ${\sf  GWFD } $}\label{modneigh5}
If we consider $\rightarrow_{D} $ instead of $\rightarrow_{ I}$ in the system ${\sf  GWFI }$ and add the cut rule to it, then the  system ${\sf  GWFD}$ is defined.
$$\frac{\Gamma\Rightarrow B\rightarrow C~~~~\Gamma\Rightarrow D\rightarrow C}{\Gamma \Rightarrow (B\vee D\rightarrow C)} \rightarrow_{D} $$
The proof of the admissibility of the cut rule for the system ${\sf  GWFD}$ remains an open problem, and I have not yet been able to provide a proof.

\begin{lem}\label{48}

\
\begin{enumerate}
\item If ${\sf  GWFD } \vdash_{n}\Rightarrow A\rightarrow B$, then ${\sf  GWFD } \vdash_{n} A\Rightarrow B$. 
\item If ${\sf  GWFD }  \vdash_{n}  \Gamma \Rightarrow  A\wedge B $, then $ {\sf  GWFD } \vdash_{n}  \Gamma \Rightarrow  A $ and $ \vdash_{n}  \Gamma \Rightarrow B $.
\end{enumerate}
\end{lem}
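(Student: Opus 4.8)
The plan is to prove both items by induction on the derivation height $n$, following the pattern of Lemmas~\ref{44}, \ref{46}, \ref{47} and of the invertibility Lemmas~\ref{2} and \ref{23}.

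For item~1, first observe that any rule of ${\sf GWFD}$ other than $\rightarrow_R$, $\rightarrow_D$ and $Cut$ has, in its conclusion, either a non-empty antecedent or a succedent that is not an implication, so a derivation of $\Rightarrow A\rightarrow B$ must end with one of these three rules. If it ends with $\rightarrow_R$, the premise is already $A\Rightarrow B$, with a derivation of height $\leq n$. If it ends with $\rightarrow_D$, then $A\rightarrow B$ has the shape $(B'\vee D')\rightarrow C'$, that is, $A=B'\vee D'$ and $B=C'$, and the premises $\Rightarrow B'\rightarrow C'$, $\Rightarrow D'\rightarrow C'$ have height $\leq n-1$; the induction hypothesis gives $\vdash_{n-1}B'\Rightarrow C'$ and $\vdash_{n-1}D'\Rightarrow C'$, and one application of $\vee_L$ produces $\vdash_{n}B'\vee D'\Rightarrow C'$, which is $\vdash_{n}A\Rightarrow B$. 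For item~2, again by induction on $n$: when the last rule is $\wedge_R$ with principal formula $A\wedge B$ its premises are exactly $\Gamma\Rightarrow A$ and $\Gamma\Rightarrow B$; when it is $\wedge_L$, $\vee_L$ or $\bot_L$ the formula $A\wedge B$ is not principal, so one applies the induction hypothesis to the premise(s) and re-applies the rule; and when it is $Cut$, with premises $\Gamma_1\Rightarrow D$ and $D,\Gamma_2\Rightarrow A\wedge B$, one first applies the induction hypothesis to the right premise (of height $\leq n-1$), obtaining $D,\Gamma_2\Rightarrow A$ and $D,\Gamma_2\Rightarrow B$, and then cuts each against $\Gamma_1\Rightarrow D$. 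In every case the height is preserved.

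The point that needs care, and which I expect to be the main obstacle, is the $Cut$ case of item~1: since $Cut$ is a primitive rule of ${\sf GWFD}$ (its admissibility being open), the derivation of $\Rightarrow A\rightarrow B$ may end with a cut whose premises are $\Rightarrow E$ and $E\Rightarrow A\rightarrow B$, and the induction hypothesis of item~1 cannot be applied to the right premise because its antecedent is non-empty. My plan is to handle this by a subsidiary induction on the derivation of the right premise, establishing the auxiliary statement: \emph{if every formula of an antecedent $\Gamma$ is derivable in ${\sf GWFD}$ as a sequent $\Rightarrow E_i$ and $\Gamma\Rightarrow A\rightarrow B$ is derivable, then $A\Rightarrow B$ is derivable}. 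Here the $\rightarrow_R$ sub-case returns $A\Rightarrow B$ outright, the $\rightarrow_D$ sub-case recurses and closes with $\vee_L$, and the $\wedge_L$ sub-case recurses after splitting the provable conjunctive antecedent formula by item~2; the awkward sub-cases are $\vee_L$ and a nested $Cut$, which push the argument back onto exactly the cut-permutations whose unavailability is what leaves cut-admissibility for ${\sf GWFD}$ open. I therefore expect that the $Cut$ case will either cost the height index --- so that for ${\sf GWFD}$ one proves only the unindexed ``$\vdash\Rightarrow A\rightarrow B$ implies $\vdash A\Rightarrow B$'' --- or will have to be avoided by stating the lemma only for the derivations that actually occur in the equivalence proof, where $\Rightarrow A\rightarrow B$ always arises directly from $\rightarrow_R$ or $\rightarrow_D$.
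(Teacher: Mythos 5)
Your treatment of the two cases that actually get resolved is exactly the paper's: for item~1 the $\rightarrow_R$ case is disposed of as in Lemma~\ref{44}, the $\rightarrow_D$ case applies the induction hypothesis to the premises $\Rightarrow B'\rightarrow C'$ and $\Rightarrow D'\rightarrow C'$ and closes with $\vee_L$, and item~2 is handled as in Lemma~\ref{23}. On that part there is nothing to add.

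Your worry about the $Cut$ case is not a defect of your proposal but a genuine observation about the statement itself: since ${\sf GWFD}$ is defined as ${\sf GWFI}$ with $\rightarrow_D$ in place of $\rightarrow_I$ \emph{plus the primitive cut rule}, a derivation of $\Rightarrow A\rightarrow B$ may indeed end with $Cut$ on premises $\Rightarrow E$ and $E\Rightarrow A\rightarrow B$, and the paper's argument (``by Lemma~\ref{44}, we just need to consider the case in which $\Rightarrow A\rightarrow B$ is derived by $\rightarrow_D$'') silently omits this case; Lemma~\ref{44} is proved for \gwf, where cut is not a primitive rule, so it does not cover it. The case is not vacuous: take $E=C\rightarrow D$ with $\Rightarrow C\rightarrow D$ obtained by $\rightarrow_R$, and derive $C\rightarrow D\Rightarrow A\rightarrow B$ by $\rightarrow_{LR}$ from $C\Rightarrow A$, $A\Rightarrow C$, $D\Rightarrow B$, $B\Rightarrow D$; recovering $A\Rightarrow B$ then requires chaining $A\Rightarrow C$, $C\Rightarrow D$ and $D\Rightarrow B$ by further cuts, which at the very least destroys height preservation. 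Your three exit routes (an auxiliary statement with provable antecedent, dropping the height index, or restricting the lemma to the occurrences arising in Theorem~\ref{3cdcd}) are the natural ones; note that the index $n$ is never used where the lemma is invoked downstream, so the unindexed version would suffice for the equivalence proof, but even that version still owes an argument for the $Cut$ case. Your $Cut$ case for item~2, by contrast, is fine and height-preserving as you describe it. In short: you have reproduced the paper's proof where it exists and correctly located the gap that it leaves open.
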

\begin{proof}
1. The proof is by induction on the height of the derivation of the premises.
By Lemma \ref{44}, we jus need to consider the case in which   $\Rightarrow A\rightarrow B$ is derived by $ \rightarrow_{D} $ and $ A=C\vee D $. Then it has two premises $ \Rightarrow C\rightarrow B $ and $ \Rightarrow  D\rightarrow B$. Then:
$$\frac{\frac{\Rightarrow C\rightarrow B}{C\Rightarrow  B}I\!H~~~\frac{\Rightarrow  D\rightarrow B}{D\Rightarrow  B}I\!H }{C\vee D\Rightarrow B}\vee_{L}$$

2. Similar to the proof of Lemma \ref{23}.
\end{proof}

Now we establish the equivalence between the Hilbert-style system {\sf WFD} and the corresponding sequent calculus, i.e. ${\sf  GWFD } $.

\begin{thm} \label{3cdcd}
Derivability in the sequent system ${\sf  GWFD } $ and in the Hilbert-style  system $ {\sf WFD} $ are
equivalent, i.e.
$${\sf  GWFD }  \vdash \Gamma \Rightarrow \Delta ~~~~\mbox{iff}~~~~\vdash_{\sf WFD} \bigwedge \Gamma \rightarrow \bigvee \Delta $$
\end{thm}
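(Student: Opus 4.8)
The plan is to follow the two-directional strategy already used for Theorems \ref{3cI} and \ref{3cc}: in each direction one reduces to the corresponding equivalence for \gwf (Theorem \ref{3}) and treats only the contribution of the genuinely new ingredient, namely the rule $\rightarrow_D$ for the left-to-right direction and the axiom ${\sf D}$ for the right-to-left direction.

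\textbf{Left-to-right.} The argument is by induction on the height of the \gwfD-derivation of $\Gamma\Rightarrow\Delta$, and by Theorem \ref{3} the only case not already covered is the one whose last inference is $\rightarrow_D$, so that $\Gamma\Rightarrow(B\vee D\rightarrow C)$ is obtained from $\vdash_{n-1}\Gamma\Rightarrow B\rightarrow C$ and $\vdash_{n-1}\Gamma\Rightarrow D\rightarrow C$ (side contexts being absorbed exactly as in Theorem \ref{3cI}, via axioms $1$ and $4$ and rule $9$). By the induction hypothesis $\vdash_{\sf WFD}\bigwedge\Gamma\rightarrow(B\rightarrow C)$ and $\vdash_{\sf WFD}\bigwedge\Gamma\rightarrow(D\rightarrow C)$; rule $10$ then gives $\vdash_{\sf WFD}\bigwedge\Gamma\rightarrow((B\rightarrow C)\wedge(D\rightarrow C))$, and combining this with the instance $(B\rightarrow C)\wedge(D\rightarrow C)\rightarrow(B\vee D\rightarrow C)$ of axiom ${\sf D}$ by rule $9$ yields $\vdash_{\sf WFD}\bigwedge\Gamma\rightarrow(B\vee D\rightarrow C)$, as required.

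\textbf{Right-to-left.} By Theorem \ref{3} it suffices to derive the axiom ${\sf D}$, i.e. $(A\rightarrow C)\wedge(B\rightarrow C)\rightarrow(A\vee B\rightarrow C)$, in \gwfD. From the instances $A\rightarrow C, B\rightarrow C\Rightarrow A\rightarrow C$ and $A\rightarrow C, B\rightarrow C\Rightarrow B\rightarrow C$ of the identity lemma (cf. Lemma \ref{1N}), two applications of $\wedge_L$ give $(A\rightarrow C)\wedge(B\rightarrow C)\Rightarrow A\rightarrow C$ and $(A\rightarrow C)\wedge(B\rightarrow C)\Rightarrow B\rightarrow C$; one application of $\rightarrow_D$ produces $(A\rightarrow C)\wedge(B\rightarrow C)\Rightarrow(A\vee B\rightarrow C)$, and a final $\rightarrow_R$ yields $\Rightarrow(A\rightarrow C)\wedge(B\rightarrow C)\rightarrow(A\vee B\rightarrow C)$. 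As in the base case of Theorem \ref{3}, the step corresponding to rule $13$ is handled through the inversion Lemma \ref{48}, and the steps corresponding to rule $9$ are reproduced using the cut rule, which belongs to \gwfD by definition, so that ---unlike in Section \ref{modneigh1}--- no cut-admissibility result is needed here.

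There is no genuinely new obstacle: the whole argument is structurally identical to the proof of Theorem \ref{3cc}, with ${\sf C}$ replaced by ${\sf D}$, $\rightarrow_C$ by $\rightarrow_D$, and the relevant conjunctions in succedent position by disjunctions in antecedent position. The only mildly delicate point is, exactly as in the $\rightarrow_I$ and $\rightarrow_C$ cases, matching the premise shapes of $\rightarrow_D$ with the intended reading $\bigwedge\Gamma\rightarrow\bigvee\Delta$ of a sequent and keeping track of the side multisets; this is routine.
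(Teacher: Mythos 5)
Your proposal is correct and follows essentially the same route as the paper, which proves this theorem simply by reference to the proof of Theorem \ref{3cc}: the left-to-right direction treats the $\rightarrow_D$ case via the induction hypothesis, rule 10, an instance of axiom ${\sf D}$ and rule 9 (with side contexts absorbed by axioms 1 and 4), and the right-to-left direction derives axiom ${\sf D}$ in ${\sf GWFD}$ from the identity lemma by $\wedge_L$, $\rightarrow_D$ and $\rightarrow_R$. You have in fact supplied the details the paper leaves implicit, including the correct observation that cut may be used freely here since it is part of ${\sf GWFD}$ by definition.
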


\begin{proof}
The proof is similar to the proof of Theorem \ref{3cc}.
\end{proof}

By combining Theorem \ref{4b} and Theorem  \ref{3cdcd} we have the following result.
\begin{cor}
The calculus ${\sf  GWFD}$ is sound and complete with respect to the class of $ N\!B $-neighbourhood frames that are closed under union.
\end{cor}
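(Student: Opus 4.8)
The plan is to obtain the corollary as a direct composition of the syntactic bridge supplied by Theorem \ref{3cdcd} with the semantic completeness of the corresponding axiomatic system furnished by Theorem \ref{4b}; no new induction or direct semantic argument on sequents is required. The only substantive point is to fix the notion of validity for a sequent so that it coincides with the validity of its characteristic formula, after which the two results chain together mechanically.

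First I would adopt, following the reading of sequents recalled at the beginning of Section \ref{modneigh1}, the convention that a sequent $\Gamma \Rightarrow \Delta$ is \emph{valid} in a class $\mathcal{K}$ of NB-frames precisely when the formula $\bigwedge \Gamma \rightarrow \bigvee \Delta$ is valid in $\mathcal{K}$, using exactly the empty-context conventions already built into the translation $\bigwedge \Gamma \rightarrow \bigvee \Delta$ appearing in Theorem \ref{3cdcd}. Let $\mathcal{K}_{\cup}$ be the class of NB-frames closed under union. By the Proposition correlating the axioms and rules with frame properties, axiom ${\sf D}$ corresponds exactly to closure under union; hence the instance of Theorem \ref{4b} with parameter set $\{{\sf D}\}$ asserts that ${\sf WFD}$ is sound and strongly complete with respect to $\mathcal{K}_{\cup}$, i.e. $\vdash_{\sf WFD} C$ holds iff $C$ is valid throughout $\mathcal{K}_{\cup}$.

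For the soundness half, assume ${\sf GWFD} \vdash \Gamma \Rightarrow \Delta$. Theorem \ref{3cdcd} then gives $\vdash_{\sf WFD} \bigwedge \Gamma \rightarrow \bigvee \Delta$, and the soundness direction of Theorem \ref{4b} makes $\bigwedge \Gamma \rightarrow \bigvee \Delta$ valid in $\mathcal{K}_{\cup}$, which by our reading of sequents is just validity of $\Gamma \Rightarrow \Delta$ in $\mathcal{K}_{\cup}$. For the completeness half I would run the same chain in reverse: validity of $\Gamma \Rightarrow \Delta$ in $\mathcal{K}_{\cup}$ is validity of $\bigwedge \Gamma \rightarrow \bigvee \Delta$ there, whence the completeness direction of Theorem \ref{4b} yields $\vdash_{\sf WFD} \bigwedge \Gamma \rightarrow \bigvee \Delta$, and Theorem \ref{3cdcd} delivers ${\sf GWFD} \vdash \Gamma \Rightarrow \Delta$. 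The two directions together are exactly the assertion of the corollary.

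There is no hard step here: the cut-admissibility question flagged as open for ${\sf GWFD}$ is irrelevant, since Theorem \ref{3cdcd} already furnishes the full derivational equivalence with ${\sf WFD}$ independently of it. The only place demanding care is ensuring that the semantic notion of sequent validity is declared with exactly the same formula translation as in Theorem \ref{3cdcd}, so that the biconditionals line up verbatim and no gap opens between the syntactic and semantic sides.
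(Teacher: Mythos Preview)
Your proposal is correct and mirrors the paper's own argument: the corollary is obtained simply by combining Theorem~\ref{3cdcd} with Theorem~\ref{4b}, exactly as you do. The additional care you take in fixing the notion of sequent validity is a reasonable elaboration of what the paper leaves implicit.
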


\subsection{The system {\sf GF}}
The system ${\sf  GF }$ is defined as $\gwf \oplus \rightarrow_{\sf D}\oplus \rightarrow_{\sf C}$.

\begin{thm}
\textbf{Height-preserving weakening.} 
The left  rule of weakening is height-preserving admissible in ${\sf  GF }$.
$$ \frac{\Gamma\Rightarrow C}{A, \Gamma\Rightarrow C}L_{w}$$
\end{thm}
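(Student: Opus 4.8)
The plan is to argue by induction on the height $h$ of the given derivation of $\Gamma \Rightarrow C$ in ${\sf GF}$, following exactly the pattern of the proof of Theorem~\ref{1b}. If $h = 0$, then $\Gamma \Rightarrow C$ is an axiom (so $C$ is an atom occurring in $\Gamma$) or an instance of $\bot_L$ (so $\bot \in \Gamma$); in either case $A, \Gamma \Rightarrow C$ is again an axiom or an instance of $\bot_L$, with height $0$.

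For the inductive step, let $R$ be the last rule applied and split the rules into two groups according to how the weakened context is passed upwards. For $\wedge_L$, $\vee_L$, $\wedge_R$, $\vee^1_R$, $\vee^2_R$ and the new rules $\rightarrow_{\sf C}$ and $\rightarrow_{\sf D}$, the antecedent context of the conclusion also occurs as the antecedent context of every premise of $R$; here I apply the induction hypothesis to insert $A$ into the antecedent of each premise — which does not raise its height — and then reapply $R$, obtaining a derivation of $A, \Gamma \Rightarrow C$ of height $\leq h$. For $\rightarrow_R$ and $\rightarrow_{LR}$ the premises are sequents of the form $E \Rightarrow F$ that do not contain the context being weakened at all, so I leave those subderivations untouched and simply add $A$ to the antecedent of the conclusion of that rule instance; the height is again unchanged.

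Concretely, in the $\rightarrow_{\sf C}$ case the derivation ends with premises $\Gamma \Rightarrow B \rightarrow C'$ and $\Gamma \Rightarrow B \rightarrow D'$ and conclusion $\Gamma \Rightarrow B \rightarrow C' \wedge D'$; by the induction hypothesis I obtain derivations of $A, \Gamma \Rightarrow B \rightarrow C'$ and $A, \Gamma \Rightarrow B \rightarrow D'$ of no greater height, and one application of $\rightarrow_{\sf C}$ yields $A, \Gamma \Rightarrow B \rightarrow C' \wedge D'$; the $\rightarrow_{\sf D}$ case is symmetric, and the $\rightarrow_R$, $\rightarrow_{LR}$ cases are handled exactly as in Theorem~\ref{1b}.

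I do not expect a genuine obstacle: the verification is mechanical. The only point worth singling out — and the only place where the argument differs from Theorem~\ref{1b} — is to check that the two added rules $\rightarrow_{\sf C}$ and $\rightarrow_{\sf D}$ copy the antecedent context verbatim into both of their premises, so that the height-preserving induction hypothesis applies to each premise directly. This also explains why only the \emph{left} weakening rule is stated: in ${\sf GF}$ the right-hand side of the conclusion of $\rightarrow_R$, $\rightarrow_{LR}$, $\rightarrow_{\sf C}$ and $\rightarrow_{\sf D}$ is a single formula, leaving no room for a height-preserving right weakening.
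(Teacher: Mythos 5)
Your proof is correct and is essentially the argument the paper intends: the paper's own proof is just a pointer to Theorem~\ref{1b}, and you have carried out that same induction on derivation height, with the only new content being the (routine) check that $\rightarrow_{\sf C}$ and $\rightarrow_{\sf D}$ copy the antecedent context into their premises so the height-preserving induction hypothesis applies. Your closing observation about why only left weakening is stated is a sensible gloss, not a deviation from the paper's route.
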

\begin{proof}
The proof is similar to the proof of Theorem \ref{1b}.
\end{proof}

\begin{lem}\label{23a}
In ${\sf  GF}$ we have:
\begin{enumerate}
\item If $ \vdash_{n} A\wedge B, \Gamma \Rightarrow C $, then $ \vdash_{n} A, B, \Gamma \Rightarrow C$.
\item If $ \vdash_{n} A\vee B, \Gamma \Rightarrow C $, then $ \vdash_{n} A, \Gamma \Rightarrow C$ and $ \vdash_{n} B, \Gamma \Rightarrow C $.
\item If $ \vdash_{n}  \Gamma \Rightarrow  A\wedge B $, then $ \vdash_{n}  \Gamma \Rightarrow   A $ and $ \vdash_{n}  \Gamma \Rightarrow   B $.
\end{enumerate}
\end{lem}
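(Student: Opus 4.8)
The plan is to prove all three claims by a single induction on the height $n$ of the derivation of the assumed sequent, following verbatim the template of Lemmas~\ref{2} and~\ref{23}; the only work specific to ${\sf GF}$ is to check that the two extra rules $\rightarrow_{\sf C}$ and $\rightarrow_{\sf D}$ cause no trouble. In the base case $n=0$ the assumption is an instance of $Ax$ or of $\bot_{L}$. In every claim the formula being decomposed is compound, hence neither an atom nor $\bot$, so the atom (or the $\bot$) witnessing the initial sequent must occur in the remaining context; therefore the target sequent — e.g.\ $A,B,\Gamma\Rightarrow C$ in claim~1, each of $A,\Gamma\Rightarrow C$ and $B,\Gamma\Rightarrow C$ in claim~2, and each of $\Gamma\Rightarrow A$ and $\Gamma\Rightarrow B$ in claim~3 — is again an instance of $Ax$ or $\bot_{L}$ and so is derivable with height $0$.

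For the induction step I would split on whether the inverted formula is principal in the last rule. If it is principal, the last rule must be $\wedge_{L}$ for claim~1 (premise exactly $A,B,\Gamma\Rightarrow C$), $\vee_{L}$ for claim~2 (premises $A,\Gamma\Rightarrow C$ and $B,\Gamma\Rightarrow C$), and $\wedge_{R}$ for claim~3 (premises $\Gamma\Rightarrow A$ and $\Gamma\Rightarrow B$); in each situation the required sequents are literally premises of that inference, hence have height $n-1$. If the inverted formula is not principal, I would apply the induction hypothesis to the premise(s) of the last rule and re-apply that rule. This is immediate for the context-preserving rules $\wedge_{L}$, $\wedge_{R}$, $\vee_{L}$ and the right rules for disjunction; for $\rightarrow_{R}$ and $\rightarrow_{LR}$ no induction hypothesis is even needed, since their left contexts are arbitrary, so the conclusion carrying the decomposed antecedent is obtained directly as a fresh instance of the same rule from the unchanged premises; and for $\rightarrow_{\sf C}$ and $\rightarrow_{\sf D}$ the context $\Gamma$ is threaded unchanged through both premises, so the induction hypothesis applies to each premise and the rule is re-applied with the same height.

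The subtlest point — essentially the only thing beyond Lemmas~\ref{2} and~\ref{23} — is to observe that although $\rightarrow_{\sf C}$ and $\rightarrow_{\sf D}$ build implications whose immediate subformulas are themselves a conjunction ($B\rightarrow C\wedge D$) or a disjunction ($B\vee D\rightarrow C$), in all three claims the connective being analysed is the \emph{outermost} one of the inverted formula; hence such an implication can never be the principal formula of an inversion step, and these two rules arise only in the routine ``not principal'' case treated above. Granting this remark, the argument runs exactly parallel to the earlier inversion lemmas, and the height of the derivation is preserved throughout.
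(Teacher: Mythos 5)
Your proof is correct and is essentially the argument the paper intends: the paper's own proof just says ``similar to the proof of Lemma~\ref{2}'', i.e.\ induction on the derivation height with a case split on the last rule, and your explicit check that $\rightarrow_{\sf C}$ and $\rightarrow_{\sf D}$ can only arise in the non-principal case (their conclusions have an implication, never a conjunction or disjunction, as the exposed formula) is precisely the only new observation needed. One minor addendum: since ${\sf GF}$ is treated elsewhere (e.g.\ in Theorem~\ref{cutF}) as also containing $\rightarrow_{I}$, that rule should be listed too, but it threads the context $\Gamma$ unchanged through both premises exactly as $\rightarrow_{\sf C}$ and $\rightarrow_{\sf D}$ do, so it is covered verbatim by the same clause of your argument.
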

\begin{proof}
Similar to the proof of Lemma \ref{2}. 
\end{proof}

\begin{thm}
\textbf{Height-preserving contraction.}
The left  rule of contraction is height-preserving admissible in ${\sf  GF}$.
$$\frac{D, D, \Gamma\Rightarrow C}{D, \Gamma\Rightarrow C}L_{c}$$
\end{thm}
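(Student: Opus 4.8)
The plan is to proceed exactly as in the proof of Theorem~\ref{1c}, by induction on the height of the derivation of the premiss $D, D, \Gamma \Rightarrow C$, now invoking the tools already established for ${\sf GF}$ (height-preserving weakening and the inversion lemma~\ref{23a}) in place of their ${\sf GWF}$-counterparts. Since ${\sf GF}$ carries a single formula in the succedent, only left contraction is at issue. The base case is immediate: if the premiss has height $0$ it is an instance of $Ax$ or of $\bot_L$, and deleting one copy of $D$ from its antecedent again yields an instance of $Ax$ or $\bot_L$ of height $0$.

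For the inductive step I would distinguish whether the contracted formula $D$ is principal in the last rule. If $D$ is not principal, then the active formulas of the last inference all lie in $\Gamma$ (or in the succedent), both copies of $D$ are inherited by each premiss, so one applies the induction hypothesis to the premiss(es) and reapplies the same rule; this disposes of $\wedge_R$, of $\vee_L$ on both premisses, of the disjunction-right rule, and in particular of the two new rules $\to_{\sf C}$ and $\to_{\sf D}$, whose shared context $\Gamma$ is simply passed to the premisses and contracted there. If $D$ is principal, then — because left contraction acts on the antecedent — the last rule can only be $\wedge_L$, $\vee_L$ or $\to_{LR}$. For $\wedge_L$ with $D = A \wedge B$ and for $\vee_L$ with $D = A \vee B$ the argument is the familiar one (cf. Theorem~2.4.1 of~\cite{Neg}): invert one occurrence of $D$ by Lemma~\ref{23a}, keeping the derivation height, then apply the induction hypothesis once or twice to contract the resulting duplicated subformulas, and finally reapply $\wedge_L$ (resp. $\vee_L$), raising the height by one. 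For $\to_{LR}$, and likewise for $\to_R$, the premisses carry no context whatsoever, so if $D = A \to C$ is principal while a second copy of $D$ sits in the antecedent context one simply erases that copy from the conclusion and reapplies $\to_{LR}$ with the very same premisses, at the same height.

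I do not expect a genuine obstacle here. The only points to watch are that Lemma~\ref{23a} is height-preserving, so that the induction hypothesis remains applicable within the height budget after an inversion step, and that the two additional rules $\to_{\sf C}$, $\to_{\sf D}$ never create a new principal-formula case: their principal formula is an implication standing in the succedent, hence a left-contracted formula can occur in such an inference only inside the shared context $\Gamma$ and is therefore covered by the non-principal case above. Consequently no subcase beyond those already treated for ${\sf GWF}$ in Theorem~\ref{1c} actually arises, and the proof goes through essentially verbatim once the ${\sf GF}$ inversion lemma is substituted.
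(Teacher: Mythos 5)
Your proposal is correct and matches the paper's intended argument: the paper simply states that the proof is analogous to that of Theorem~\ref{1c} for \gwf, namely induction on the height of the derivation with a case split on whether the contracted formula is principal, using the inversion lemma and noting that the premisses of $\rightarrow_{R}$ and $\rightarrow_{LR}$ carry no context while $\rightarrow_{\sf C}$ and $\rightarrow_{\sf D}$ only pass the contracted formula through their shared context. Your treatment of the two new rules as falling under the non-principal case is exactly the observation needed.
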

\begin{proof}
The proof is similar to the proof of Theorem \ref{1c}.
\end{proof}

First we prove the admissibility of cut for ${\sf  GF } $.

\begin{thm}\label{cutF}
The rule of cut  is addmissible in ${\sf  GF } $.
$$ \frac{\Gamma\Rightarrow  D~~~~~~D, \Gamma^{'}\Rightarrow \Delta}{\Gamma,\Gamma^{'} \Rightarrow \Delta}Cut$$
\end{thm}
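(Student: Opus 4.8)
The plan is to run the same double induction as in Theorem~\ref{1d}: a main induction on the weight $w(D)$ of the cut formula, with a subinduction on the cut-height. Since ${\sf GF}$ arises from the calculi already handled by adjoining the rules $\rightarrow_{\sf C}$ and $\rightarrow_{\sf D}$, every reduction in which the last inference of neither premise is $\rightarrow_{\sf C}$ or $\rightarrow_{\sf D}$ is settled exactly as in Theorems~\ref{1d} and~\ref{cutI}: a premise that is an axiom or a conclusion of $\bot_{L}$; $D$ not principal in the left premise; $D$ principal in the left premise only (permute the cut upward along the right premise); and $D$ principal in both premises (reduced to cuts on proper subformulas). So the work is to add the cases in which $\rightarrow_{\sf C}$ or $\rightarrow_{\sf D}$ concludes one of the premises.

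Suppose first that $\rightarrow_{\sf C}$ or $\rightarrow_{\sf D}$ concludes the right premise. Its principal formula is then a succedent implication, so the cut formula $D$ sits in the antecedent context and is not principal; I would push the cut up to the two premises of that inference and reapply $\rightarrow_{\sf C}$ (resp.\ $\rightarrow_{\sf D}$), the two resulting cuts having the same cut formula but strictly smaller cut-height. The companion permutations for a right premise ending with $\rightarrow_{I}$, $\rightarrow_{R}$, or a propositional rule that keeps $D$ in a side context are routine, exactly as in Theorem~\ref{cutI}. Since the succedent of the left premise of a cut is the single formula $D$, the only way $\rightarrow_{\sf C}$ or $\rightarrow_{\sf D}$ can conclude the left premise is with $D$ principal; $D$ is then an implication, so the right premise must end with the sole left-implication rule $\rightarrow_{LR}$, with $D$ principal there too.

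This last configuration is the heart of the matter. In the $\rightarrow_{\sf C}$ subcase, $D = A\rightarrow(B_{1}\wedge B_{2})$, the left premise comes by $\rightarrow_{\sf C}$ from $\Gamma\Rightarrow A\rightarrow B_{1}$ and $\Gamma\Rightarrow A\rightarrow B_{2}$, and the right premise comes by $\rightarrow_{LR}$ from $A\Rightarrow F$, $F\Rightarrow A$, $B_{1}\wedge B_{2}\Rightarrow G$, $G\Rightarrow B_{1}\wedge B_{2}$, with conclusion $A\rightarrow(B_{1}\wedge B_{2}),\Gamma'\Rightarrow F\rightarrow G$. I would derive $A\rightarrow B_{i}\Rightarrow F\rightarrow B_{i}$ by $\rightarrow_{LR}$ (from $A\Rightarrow F$, $F\Rightarrow A$ and $B_{i}\Rightarrow B_{i}$, the latter by the identity lemma), cut it against $\Gamma\Rightarrow A\rightarrow B_{i}$ to get $\Gamma\Rightarrow F\rightarrow B_{i}$ for $i=1,2$, and combine these by $\rightarrow_{\sf C}$ to get $\Gamma\Rightarrow F\rightarrow(B_{1}\wedge B_{2})$; the cuts here are on $A\rightarrow B_{i}$, and $w(A\rightarrow B_{i}) = w(A)+w(B_{i})+1 < w(A)+w(B_{1})+w(B_{2})+2 = w(D)$, so they are licensed by the main induction hypothesis. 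Then $\rightarrow_{R}$ applied to the premise $B_{1}\wedge B_{2}\Rightarrow G$ gives $\Gamma\Rightarrow(B_{1}\wedge B_{2})\rightarrow G$, and $\rightarrow_{I}$ (a rule of ${\sf GF}$) applied to $\Gamma\Rightarrow F\rightarrow(B_{1}\wedge B_{2})$ and $\Gamma\Rightarrow(B_{1}\wedge B_{2})\rightarrow G$ yields $\Gamma\Rightarrow F\rightarrow G$; height-preserving left weakening then gives the required $\Gamma,\Gamma'\Rightarrow F\rightarrow G$. The $\rightarrow_{\sf D}$ subcase is dual: with $D = (A_{1}\vee A_{2})\rightarrow B$ one derives $\Gamma\Rightarrow A_{i}\rightarrow G$ by cuts on $A_{i}\rightarrow B$ (again strictly lighter than $D$), combines by $\rightarrow_{\sf D}$ into $\Gamma\Rightarrow(A_{1}\vee A_{2})\rightarrow G$, obtains $\Gamma\Rightarrow F\rightarrow(A_{1}\vee A_{2})$ from $F\Rightarrow A_{1}\vee A_{2}$ by $\rightarrow_{R}$, and finishes with $\rightarrow_{I}$ and weakening.

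The expected obstacle is exactly this family of cases. When $D$ is principal in a premise concluded by $\rightarrow_{\sf C}$ or $\rightarrow_{\sf D}$ its principal formula is a \emph{compound} implication, whereas the matching $\rightarrow_{LR}$ inference only records provable equivalences between immediate subformulas and the \emph{arbitrary} side formulas $F$ and $G$; a naive ``recombine and then cut'' therefore produces a cut on something like $F\rightarrow(B_{1}\wedge B_{2})$, whose weight depends on $w(F)$ and so need not undercut $w(D)$. The transformation above avoids this by routing the $G$-part of the equivalence through $\Gamma\Rightarrow(B_{1}\wedge B_{2})\rightarrow G$ and composing with $\rightarrow_{I}$ instead of cutting; this is precisely where $\rightarrow_{I}$ and $\rightarrow_{R}$ enter essentially, which is presumably also the reason cut-admissibility succeeds for ${\sf GF}$ but is left open for ${\sf GWFC}$ and ${\sf GWFD}$ in isolation. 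In the write-up I would take care to check the two strict weight inequalities (each with slack at least $1$, coming from the weight of the discarded conjunct or disjunct) and that the $\rightarrow_{R}$ step is legitimate because its context is unrestricted.
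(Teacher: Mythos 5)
Your proposal is correct, and its overall architecture --- the double induction on cut formula weight with a subinduction on cut-height, the dispatching of all non-$\rightarrow_{\sf C}$/$\rightarrow_{\sf D}$ configurations to Theorems~\ref{1d} and~\ref{cutI}, and the observation that a left premise ending in $\rightarrow_{\sf C}$ or $\rightarrow_{\sf D}$ forces the principal/principal confrontation with $\rightarrow_{LR}$ --- is the same as in the paper's proof of Theorem~\ref{cutF}. The one place where you genuinely diverge is that principal/principal case. The paper's reduction there is purely local and cut-free: from the $\rightarrow_{LR}$ premises $E\Rightarrow A$ and $B\wedge C\Rightarrow F$ it forms $\Gamma\Rightarrow E\rightarrow A$ and $\Gamma\Rightarrow B\wedge C\rightarrow F$ by $\rightarrow_{R}$, re-derives $\Gamma\Rightarrow A\rightarrow B\wedge C$ by $\rightarrow_{\sf C}$, and chains the three implications by two applications of $\rightarrow_{I}$, so no residual cut (and hence no appeal to the induction hypothesis) is needed in that step; the $\rightarrow_{\sf D}$ case is handled dually. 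Your version instead manufactures $A\rightarrow B_{i}\Rightarrow F\rightarrow B_{i}$ by $\rightarrow_{LR}$ with reflexive side premises and cuts on the strictly lighter formulas $A\rightarrow B_{i}$ before recombining with $\rightarrow_{\sf C}$, $\rightarrow_{R}$ and $\rightarrow_{I}$; the weight inequality you verify does license these cuts under the main induction, so the argument is sound, though it carries slightly more inductive overhead than the paper's transformation. Both treatments vindicate your closing diagnosis: it is the availability of $\rightarrow_{I}$, together with the unrestricted context of $\rightarrow_{R}$, that allows the compound implication created by $\rightarrow_{\sf C}$/$\rightarrow_{\sf D}$ to be absorbed, and this is precisely the ingredient missing in ${\sf GWFC}$ and ${\sf GWFD}$ taken in isolation, for which the paper leaves cut admissibility open.
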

\begin{proof}
By theorem \ref{cutI}, we just need to consider the following cases.

If the right premiss is by rule $ \rightarrow_{C} $, we transform

$$ \frac{\genfrac{}{}{0pt}{}{}{\Gamma\Rightarrow D}~~~~\frac{D\Rightarrow B\rightarrow C~~~~D\Rightarrow B\rightarrow D}{D \Rightarrow B\rightarrow C\wedge D }\rightarrow_{C}}{\Gamma \Rightarrow B\rightarrow C\wedge D }Cut$$

 \begin{center}
into
\end{center}

$$\frac{\frac{\Gamma\Rightarrow D ~~~D\Rightarrow B\rightarrow C}{\Gamma\Rightarrow B\rightarrow C}Cut~~~\frac{\Gamma\Rightarrow D ~~~D\Rightarrow B\rightarrow D}{\Gamma\Rightarrow B\rightarrow D}Cut}{\Gamma \Rightarrow B\rightarrow C\wedge D }\rightarrow_{C}  $$

If the right premiss is by rule $ \rightarrow_{D} $, we transform as above.

Left premiss is by $ \rightarrow_{LR} $, and right one by $ \rightarrow_{C} $. We transform

$$\frac{\frac{F\Rightarrow  A~~~~A\Rightarrow F~~~~G\Rightarrow E~~~~E\Rightarrow G}{F\rightarrow G\Rightarrow A\rightarrow E}\rightarrow_{LR}~~~~\frac{A\rightarrow E\Rightarrow B\rightarrow C~~~~A\rightarrow E\Rightarrow B\rightarrow D}{A\rightarrow E\Rightarrow B\rightarrow C\wedge D}\rightarrow_{C}}{F\rightarrow G  \Rightarrow B\rightarrow C\wedge D}Cut$$
 \begin{center}
into 
\end{center}
$$ \frac{\frac{\frac{F\Rightarrow  A~~A\Rightarrow F~~G\Rightarrow E~~E\Rightarrow G}{F\rightarrow G\Rightarrow A\rightarrow E}\rightarrow_{LR}~~\genfrac{}{}{0pt}{}{}{A\rightarrow E \Rightarrow B\rightarrow C}}{F\rightarrow G  \Rightarrow B\rightarrow C}Cut~~\frac{\frac{F\Rightarrow  A~~A\Rightarrow F~~G\Rightarrow E~~E\Rightarrow G}{F\rightarrow G\Rightarrow A\rightarrow E}\rightarrow_{LR}~~\genfrac{}{}{0pt}{}{}{A\rightarrow E \Rightarrow B\rightarrow D}}{F\rightarrow G  \Rightarrow B\rightarrow D}Cut}{F\rightarrow G  \Rightarrow B\rightarrow C\wedge D}\rightarrow_{C}$$

Left premiss is by $ \rightarrow_{LR} $, and right one by $ \rightarrow_{D} $. Same as above.

Both premises are by rule $ \rightarrow_{C} $. We transform

$$\frac{\frac{\Gamma\Rightarrow A\rightarrow B~~~~\Gamma \Rightarrow A\rightarrow C}{\Gamma\Rightarrow A\rightarrow B\wedge C}\rightarrow_{C}~~~~\frac{A\rightarrow B\wedge C\Rightarrow E\rightarrow F~~~~A\rightarrow B\wedge C\Rightarrow E\rightarrow D }{A\rightarrow B\wedge C\Rightarrow E\rightarrow F\wedge D}\rightarrow_{C}}{\Gamma\Rightarrow E\rightarrow F\wedge D}Cut$$

\begin{center}
into 
\end{center}
$$\frac{\frac{\frac{\Gamma\Rightarrow A\rightarrow B~~~~\Gamma \Rightarrow A\rightarrow C}{\Gamma\Rightarrow A\rightarrow B\wedge C}\rightarrow_{C}~~\genfrac{}{}{0pt}{}{}{A\rightarrow B\wedge C\Rightarrow E\rightarrow F}}{\Gamma\Rightarrow E\rightarrow F}Cut ~~\frac{\frac{\Gamma\Rightarrow A\rightarrow B~~~~\Gamma \Rightarrow A\rightarrow C}{\Gamma\Rightarrow A\rightarrow B\wedge C}\rightarrow_{C}~~\genfrac{}{}{0pt}{}{}{A\rightarrow B\wedge C \Rightarrow E\rightarrow D}}{\Gamma\Rightarrow E\rightarrow D}Cut}{\Gamma\Rightarrow E\rightarrow F\wedge D}\rightarrow_{I} $$

Both premises are by rule $ \rightarrow_{D} $. Same as above.

Left premiss is by $ \rightarrow_{D} $, and right one by $ \rightarrow_{LR} $. We transform

$$\frac{\frac{\Gamma\Rightarrow  A\rightarrow B~~~~\Gamma\Rightarrow C\rightarrow B}{\Gamma\Rightarrow A\vee C\rightarrow B}\rightarrow_{D}~~~~\frac{A\vee C\Rightarrow E~~~~E\Rightarrow A\vee C~~~~B\Rightarrow F~~~~F\Rightarrow B}{A\vee C\rightarrow B \Rightarrow E\rightarrow F}\rightarrow_{LR}}{\Gamma\Rightarrow E\rightarrow F}Cut$$

\begin{center}
into 
\end{center}
$$ \frac{\frac{\frac{E\Rightarrow A\vee C}{\Gamma\Rightarrow E\rightarrow A\vee C}\rightarrow_{R}~~~\frac{\Gamma\Rightarrow  A\rightarrow B~~~~\Gamma\Rightarrow C\rightarrow B}{\Gamma\Rightarrow A\vee C\rightarrow B}\rightarrow_{D}}{\Gamma\Rightarrow E\rightarrow B}\rightarrow_{I}~~~\frac{B\Rightarrow F}{\Gamma\Rightarrow B\rightarrow F}\rightarrow_{R}}{\Gamma\Rightarrow E\rightarrow F}\rightarrow_{I} $$

Left premiss is by $ \rightarrow_{C} $, and right one by $ \rightarrow_{LR} $. We transform

$$\frac{\frac{\Gamma\Rightarrow  A\rightarrow B~~~~\Gamma\Rightarrow A\rightarrow C}{\Gamma\Rightarrow A \rightarrow B\wedge C}\rightarrow_{C}~~~~\frac{A\Rightarrow E~~~~E\Rightarrow A~~~~B\wedge C\Rightarrow F~~~~F\Rightarrow B\wedge C}{A \rightarrow B\wedge C \Rightarrow E\rightarrow F}\rightarrow_{LR}}{\Gamma\Rightarrow E\rightarrow F}Cut$$

\begin{center}
into 
\end{center}
$$ \frac{\frac{E\Rightarrow A}{\Gamma\Rightarrow E\rightarrow A}\rightarrow_{R}~~~~\frac{\frac{\Gamma\Rightarrow  A\rightarrow B~~~~\Gamma\Rightarrow A\rightarrow C}{\Gamma\Rightarrow A \rightarrow B\wedge C}\rightarrow_{C}~~~\frac{B\wedge C\Rightarrow F}{\Gamma\Rightarrow B\wedge C\rightarrow F}\rightarrow_{R}}{\Gamma\Rightarrow A\rightarrow F}\rightarrow_{I}}{\Gamma\Rightarrow E\rightarrow F}\rightarrow_{I} $$

Left premiss is by $ \rightarrow_{I} $, and right one by $ \rightarrow_{C} $. We transform

$$\frac{\frac{\Gamma\Rightarrow A\rightarrow B~~~~\Gamma \Rightarrow B\rightarrow C}{\Gamma\Rightarrow A\rightarrow C}\rightarrow_{I}~~~~\frac{A\rightarrow  C\Rightarrow D\rightarrow E~~~~A\rightarrow  C\Rightarrow D\rightarrow F }{A\rightarrow   C\Rightarrow D\rightarrow E\wedge F}\rightarrow_{C}}{\Gamma\Rightarrow D\rightarrow E\wedge F}Cut$$
\begin{center}
into 
\end{center}
$$ \frac{\frac{\frac{\Gamma\Rightarrow A\rightarrow B~~~~\Gamma \Rightarrow B\rightarrow C}{\Gamma\Rightarrow A\rightarrow C}\rightarrow_{I}~~~\genfrac{}{}{0pt}{}{}{A\rightarrow  C\Rightarrow D\rightarrow E}}{\Gamma\Rightarrow D\rightarrow E}Cut ~~~\frac{\frac{\Gamma\Rightarrow A\rightarrow B~~~~\Gamma \Rightarrow B\rightarrow C}{\Gamma\Rightarrow A\rightarrow C}\rightarrow_{I}~~~\genfrac{}{}{0pt}{}{}{A\rightarrow  C\Rightarrow D\rightarrow F}}{\Gamma\Rightarrow D\rightarrow  F}Cut}{\Gamma\Rightarrow D\rightarrow E\wedge F} \rightarrow_{C}$$

Left premiss is by $ \rightarrow_{I} $, and right one by $ \rightarrow_{D} $. Same as above.

Left premiss is by $ \rightarrow_{C} $, and right one by $ \rightarrow_{I} $. We transform

$$\frac{\frac{\Gamma\Rightarrow A\rightarrow B~~~~\Gamma \Rightarrow A\rightarrow C}{\Gamma\Rightarrow A\rightarrow B\wedge C}\rightarrow_{C}~~~~\frac{A\rightarrow  B\wedge C\Rightarrow E\rightarrow C~~~~A\rightarrow  B\wedge C\Rightarrow C\rightarrow F }{A\rightarrow   B\wedge C\Rightarrow E\rightarrow  F}\rightarrow_{I}}{\Gamma\Rightarrow E\rightarrow  F}Cut$$
\begin{center}
into 
\end{center}
$$ \frac{\frac{\frac{\Gamma\Rightarrow A\rightarrow B~~~~\Gamma \Rightarrow A\rightarrow C}{\Gamma\Rightarrow A\rightarrow B\wedge C}\rightarrow_{C}~~~\genfrac{}{}{0pt}{}{}{A\rightarrow B\wedge C\Rightarrow E\rightarrow C}}{\Gamma\Rightarrow E\rightarrow C}Cut ~~~\frac{\frac{\Gamma\Rightarrow A\rightarrow B~~~~\Gamma \Rightarrow A\rightarrow C}{\Gamma\Rightarrow A\rightarrow B\wedge C}\rightarrow_{C}~~~\genfrac{}{}{0pt}{}{}{A\rightarrow  B\wedge C\Rightarrow C\rightarrow F}}{\Gamma\Rightarrow C\rightarrow  F}Cut}{\Gamma\Rightarrow E\rightarrow  F} \rightarrow_{I}$$

Left premiss is by $ \rightarrow_{D} $, and right one by $ \rightarrow_{I} $. Same as above.

Left premiss is by $ \rightarrow_{C} $, and right one by $ \rightarrow_{D} $. Same as above.

Left premiss is by $ \rightarrow_{D} $, and right one by $ \rightarrow_{C} $. Same as above.

Left premiss is by $ \rightarrow_{R} $, and right one by $ \rightarrow_{C} $. We transform
$$\frac{\frac{A\Rightarrow C}{ \Gamma \Rightarrow A\rightarrow C}\rightarrow_{R}~~~~~\frac{A\rightarrow C\Rightarrow B\rightarrow D~~~~A\rightarrow C\Rightarrow B\rightarrow E}{A\rightarrow C\Rightarrow B\rightarrow D\wedge E}\rightarrow_{C}}{\Gamma\Rightarrow  B\rightarrow D\wedge E}Cut$$
\begin{center}
into 
\end{center}
$$\frac{\frac{\frac{A\Rightarrow C}{\Gamma \Rightarrow A\rightarrow C}\rightarrow_{R}~~\genfrac{}{}{0pt}{}{}{A\rightarrow C\Rightarrow B\rightarrow D}}{\Gamma\Rightarrow B\rightarrow D}Cut~~~\frac{\frac{A\Rightarrow C}{\Gamma \Rightarrow A\rightarrow C}\rightarrow_{R}~~\genfrac{}{}{0pt}{}{}{A\rightarrow C\Rightarrow B\rightarrow E}}{\Gamma\Rightarrow B\rightarrow E}Cut}{\Gamma\Rightarrow B\rightarrow D\wedge E}\rightarrow_{I} $$

Left premiss is by $ \rightarrow_{R} $, and right one by $ \rightarrow_{D} $. Same as above.

Left premiss is by $ \rightarrow_{D} $, and right one by $ \rightarrow_{R} $. This case is obvious.

Left premiss is by $ \rightarrow_{C} $, and right one by $ \rightarrow_{R} $. This case is obvious.
\end{proof}
\begin{lem}\label{51}
If ${\sf  GF } \vdash_{n}\Rightarrow A\rightarrow B$, then $ {\sf  GF }\vdash_{n} A\Rightarrow B$. 
\end{lem}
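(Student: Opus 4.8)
The plan is to argue by induction on the height $n$ of the assumed derivation of $\Rightarrow A\rightarrow B$ in ${\sf GF}$, in exactly the style of the inversion Lemmas~\ref{44}, \ref{46}, \ref{47} and~\ref{48}. The key preliminary observation is that, since the antecedent of $\Rightarrow A\rightarrow B$ is empty and its succedent is a single implication, the last rule of the derivation must be one of the implication-introduction rules of ${\sf GF}$: no left rule can apply to an empty antecedent, neither $\wedge_R$ nor the $\vee_R$-rules can produce an implication in the succedent, and $\rightarrow_{LR}$ always leaves an implication in the antecedent of its conclusion, so it cannot conclude a sequent with empty antecedent. Hence the last rule is one of $\rightarrow_R$, $\rightarrow_I$, $\rightarrow_C$ or $\rightarrow_D$.

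I would then dispose of these four cases in turn. If the last rule is $\rightarrow_R$, its premise is already $A\Rightarrow B$, with strictly smaller height, which is precisely Lemma~\ref{44}. If it is $\rightarrow_C$, then $B$ has the form $C\wedge D$ and the premises are $\Rightarrow A\rightarrow C$ and $\Rightarrow A\rightarrow D$, each of height at most $n-1$; applying the induction hypothesis yields $A\Rightarrow C$ and $A\Rightarrow D$, and a single application of $\wedge_R$ gives $A\Rightarrow C\wedge D$, as in Lemma~\ref{47}. If it is $\rightarrow_D$, then $A$ has the form $C\vee D$ and the premises are $\Rightarrow C\rightarrow B$ and $\Rightarrow D\rightarrow B$; the induction hypothesis gives $C\Rightarrow B$ and $D\Rightarrow B$, and $\vee_L$ yields $C\vee D\Rightarrow B$, as in Lemma~\ref{48}. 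Finally, if it is $\rightarrow_I$, the premises are $\Rightarrow A\rightarrow C$ and $\Rightarrow C\rightarrow B$; the induction hypothesis gives $A\Rightarrow C$ and $C\Rightarrow B$, and one application of the cut rule, which is admissible in ${\sf GF}$ by Theorem~\ref{cutF}, produces $A\Rightarrow B$, exactly as in Lemma~\ref{46}.

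I do not expect a serious obstacle. The cases $\rightarrow_R$, $\rightarrow_C$ and $\rightarrow_D$ are genuinely height-preserving, so the only delicate point is the $\rightarrow_I$ case, where the conclusion is obtained through the admissible cut rule rather than by a height-preserving transformation; as is already the situation in Lemma~\ref{46}, the height annotation in the statement is there to be read up to this appeal to cut-admissibility. The one thing worth double-checking is that the case analysis is exhaustive, i.e. that the rules of ${\sf GF}$ beyond those of ${\sf GWF}$ are precisely $\rightarrow_I$, $\rightarrow_C$, $\rightarrow_D$ and that no other rule can conclude a sequent of the form $\Rightarrow A\rightarrow B$; once this is confirmed, the induction closes immediately.
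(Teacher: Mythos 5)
Your argument is correct and is essentially the paper's own proof, which simply reads ``By Lemmas \ref{46}, \ref{47} and \ref{48}'': those lemmas (together with Lemma~\ref{44}) are exactly the four cases $\rightarrow_R$, $\rightarrow_I$, $\rightarrow_C$, $\rightarrow_D$ of the induction you spell out, and your caveat that the $\rightarrow_I$ case goes through an appeal to cut-admissibility rather than a height-preserving transformation is already present in Lemma~\ref{46} itself.
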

\begin{proof}
By Lemmas \ref{46}, \ref{47} and \ref{48}.
\end{proof}

Now we establish the equivalence between the Hilbert-style system {\sf F} and the corresponding sequent calculus, i.e. ${\sf  GF} $.

\begin{thm} \label{ffff}
Derivability in the sequent system ${\sf  GF } $ and in the Hilbert-style  system $ {\sf F} $ are
equivalent, i.e.
$${\sf  GF }  \vdash \Gamma \Rightarrow \Delta ~~~~\mbox{iff}~~~~\vdash_{\sf F} \bigwedge \Gamma \rightarrow \bigvee \Delta $$
\end{thm}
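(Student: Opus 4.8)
The plan is to prove both implications by induction, reducing wherever possible to the equivalence theorems already established for the component calculi — Theorem~\ref{3} for \gwf, Theorem~\ref{3cI} for \gwfi, Theorem~\ref{3cc} for \gwfc, and Theorem~\ref{3cdcd} for \gwfD — and exploiting as the one genuinely new tool the cut-admissibility of {\sf GF} (Theorem~\ref{cutF}), which, exactly as in the proof of Theorem~\ref{3}, makes it legitimate to invoke Cut when simulating Hilbert derivations.

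\textbf{Left-to-right.} I would argue by induction on the height of the {\sf GF}-derivation of $\Gamma\Rightarrow\Delta$. Every rule that {\sf GF} shares with \gwf and \gwfi is treated word for word as in the proofs of Theorems~\ref{3} and~\ref{3cI}, so the only cases needing separate attention are those in which the last rule is $\rightarrow_{\sf C}$ or $\rightarrow_{\sf D}$. If it is $\rightarrow_{\sf C}$, with conclusion $\Gamma',\Gamma\Rightarrow B\rightarrow(C\wedge D),\Delta$ obtained from $\Gamma\Rightarrow B\rightarrow C$ and $\Gamma\Rightarrow B\rightarrow D$, the induction hypothesis yields $\vdash_{\sf F}\bigwedge\Gamma\rightarrow(B\rightarrow C)$ and $\vdash_{\sf F}\bigwedge\Gamma\rightarrow(B\rightarrow D)$; conjoining these, then applying axiom {\sf C} and rule~9, and finally prefixing $\bigwedge\Gamma'$ and appending $\bigvee\Delta$ via axioms~4 and~1 and rule~9, gives $\vdash_{\sf F}\bigwedge\Gamma'\wedge\bigwedge\Gamma\rightarrow(B\rightarrow(C\wedge D))\vee\bigvee\Delta$, precisely as in the proof of Theorem~\ref{3cc}. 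The case $\rightarrow_{\sf D}$ is symmetric and uses axiom {\sf D}, as in Theorem~\ref{3cdcd}.

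\textbf{Right-to-left.} I would argue by induction on the height of the Hilbert-style derivation in \f. For the base case I must show every axiom of \f derivable in {\sf GF}: axioms~1--14 of \wf are obtained exactly as in the proof of Theorem~\ref{3} (the needed identity sequents being supplied by the analogue of Lemmas~\ref{1} and~\ref{1N}), and the extra axioms {\sf I}, {\sf C}, {\sf D} are derivable in {\sf GF} using the rules $\rightarrow_{\sf I}$, $\rightarrow_{\sf C}$, $\rightarrow_{\sf D}$ — these are literally the derivations displayed in the proofs of Theorems~\ref{3cI}, \ref{3cc} and~\ref{3cdcd}. For the inductive step I must show that each Hilbert rule, under the restrictions of Definition~\ref{hh}, is admissible in {\sf GF}: the purely propositional rules go through as in Theorem~\ref{3}; modus ponens (rule~5) is simulated by applying the inversion Lemma~\ref{51} to $\Rightarrow A\rightarrow B$ to obtain $A\Rightarrow B$ and then cutting against $\Rightarrow A$, which Theorem~\ref{cutF} permits; and rule~13 is handled with $\rightarrow_{LR}$, $\rightarrow_R$, $\wedge_R$ and the inversion Lemmas~\ref{23a} and~\ref{51}, exactly as in the proof of Theorem~\ref{3}.

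The context bookkeeping is routine; the single place I expect to need real care is the simulation of the rules applicable only to theorems (rules~6, 9--11, 13 and the theorem-restricted reading of MP), which hinges on Lemma~\ref{51}. Here one must remember that in {\sf GF} a sequent $\Rightarrow A\rightarrow B$ may be concluded not only by $\rightarrow_R$ but also by $\rightarrow_{\sf I}$, $\rightarrow_{\sf C}$ or $\rightarrow_{\sf D}$; covering all of these is exactly what Lemma~\ref{51} achieves (via Lemmas~\ref{46}, \ref{47} and~\ref{48}). Given that lemma and Theorem~\ref{cutF}, the whole argument is a direct assembly of pieces already proved for \gwf, \gwfi, \gwfc and \gwfD, and I expect no essentially new obstacle.
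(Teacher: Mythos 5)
Your proposal is correct and follows essentially the same route as the paper: the paper's own proof simply observes that \f is \wf closed under {\sf I}, {\sf C}, {\sf D} and declares the result immediate from Theorems~\ref{3cI}, \ref{3cc} and~\ref{3cdcd}, which is exactly the reduction you carry out in detail. Your version is in fact more explicit than the paper's one-line argument, spelling out the role of cut-admissibility (Theorem~\ref{cutF}) and of the inversion Lemma~\ref{51} in simulating the theorem-restricted Hilbert rules.
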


\begin{proof}
We know that the logic \f is the smallest set of formulas closed under instances of \wf, {\sf C}, {\sf D} and {\sf I} \cite{FD}. Hence, the proof of this theorem is obvious by Theorems \ref{3cI}, \ref{3cc} and \ref{3cdcd}.
\end{proof}

By combining Theorem \ref{4b} and Theorem  \ref{ffff} we have the following result.
\begin{cor}
The calculus ${\sf  GF }$ is sound and complete with respect to the class of  transitive $ N\!B $-neighbourhood frames that are closed under conjunction and disjunction.
\end{cor}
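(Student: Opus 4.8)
The plan is to obtain the corollary with no new work, purely by chaining two results already at hand: the syntactic equivalence of Theorem \ref{ffff} and the semantic completeness of the Hilbert system recorded in Theorem \ref{4b}. First I would recall, as stated in Section \ref{sec:modneigh}, that \f coincides with ${\sf WFCDI}$, i.e. \f is the extension of \wf by the axioms {\sf C}, {\sf D} and {\sf I}. Consequently Theorem \ref{4b} applies with $\Gamma=\lbrace {\sf I}, {\sf C}, {\sf D}\rbrace$, yielding that \f is sound and strongly complete with respect to the class of \emph{NB}-neighbourhood frames enjoying exactly the properties corresponding to {\sf I}, {\sf C} and {\sf D}.

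Next I would invoke the correspondence Proposition to name those properties explicitly: axiom {\sf I} corresponds to transitivity, axiom {\sf C} to closure under intersection (the frame counterpart of closure under conjunction), and axiom {\sf D} to closure under union (the frame counterpart of closure under disjunction). Hence the class isolated by Theorem \ref{4b} is precisely the class of transitive \emph{NB}-frames that are closed under intersection and under union, which is exactly the class named in the corollary.

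The final step is to transport the equivalence from formulas to sequents. Reading a sequent $\Gamma\Rightarrow\Delta$ semantically as the formula $\bigwedge\Gamma\rightarrow\bigvee\Delta$, I would compose the two biconditionals: by Theorem \ref{ffff}, ${\sf GF}\vdash\Gamma\Rightarrow\Delta$ iff $\vdash_{\sf F}\bigwedge\Gamma\rightarrow\bigvee\Delta$; and by Theorem \ref{4b} together with the correspondence just described, $\vdash_{\sf F}\bigwedge\Gamma\rightarrow\bigvee\Delta$ iff $\bigwedge\Gamma\rightarrow\bigvee\Delta$ is valid in every transitive \emph{NB}-frame closed under intersection and union. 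Composing these gives ${\sf GF}\vdash\Gamma\Rightarrow\Delta$ iff the sequent is valid in the stated class; the left-to-right direction is soundness and the right-to-left direction is completeness of ${\sf GF}$.

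Since every ingredient is already established, there is no genuine obstacle here; the composition is routine. The only point demanding a little care is the bookkeeping in the last paragraph: ensuring that the semantic reading of a sequent matches the formula $\bigwedge\Gamma\rightarrow\bigvee\Delta$ used in Theorem \ref{ffff}, and that the degenerate cases with $\Gamma$ or $\Delta$ empty (where the empty conjunction is $\top$ and the empty disjunction is treated via $\bot$) are handled in accordance with the sequent-interpretation conventions fixed earlier.
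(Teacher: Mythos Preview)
Your proposal is correct and follows essentially the same approach as the paper, which simply states that the corollary is obtained by combining Theorem~\ref{4b} and Theorem~\ref{ffff}. You have merely unpacked the chain of biconditionals and the identification $\f={\sf WFCDI}$ in more detail than the paper does.
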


\subsection{The system {\sf GWFCI}}
The system {\sf GWFCI} is defined as ${\sf GWFI}\oplus  \rightarrow_{C}$.
\begin{thm}\label{cutGWFCI}
The rule of cut  is addmissible in ${\sf  GWFCI } $.
$$ \frac{\Gamma\Rightarrow  D~~~~~~D, \Gamma^{'}\Rightarrow \Delta}{\Gamma,\Gamma^{'} \Rightarrow \Delta}Cut$$
\end{thm}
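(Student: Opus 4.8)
The plan is to follow the cut-elimination proof of Theorem~\ref{cutI} verbatim, extending it by exactly the cases that the new rule $\rightarrow_{C}$ creates. As there, the argument runs by a primary induction on the weight of the cut formula $D$ with a secondary induction on the cut-height, and it begins with the cases in which one premiss is an axiom or a conclusion of $\bot_{L}$ (unchanged) and in which the cut formula is not principal in the left premiss (unchanged). Since ${\sf GWFCI}$ is obtained from ${\sf GWFI}$ by adding only $\rightarrow_{C}$, the only genuinely new configurations are those in which $\rightarrow_{C}$ is the last rule applied in at least one premiss of the cut; these coincide with the $\rightarrow_{C}$-cases already displayed in the proof of Theorem~\ref{cutF}, none of whose transformations introduces the rule $\rightarrow_{D}$, so I would simply reproduce those transformations here.

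Concretely I would organise the new cases as follows. First, if the right premiss is concluded by $\rightarrow_{C}$, then — because the principal formula of $\rightarrow_{C}$ sits on the right — the cut formula is parametric in that premiss, so the cut permutes upward into the two premisses $\Gamma'\Rightarrow B\rightarrow C$ and $\Gamma'\Rightarrow B\rightarrow D$ and $\rightarrow_{C}$ is reapplied below, strictly decreasing the cut-height. Second, if the left premiss is concluded by $\rightarrow_{C}$, then the cut formula is the principal implication $B\rightarrow(C\wedge D)$, and I would split on the last rule of the right premiss: if it is $\rightarrow_{R}$ the cut simply disappears; if it is $\rightarrow_{I}$ or again $\rightarrow_{C}$, the cut is replaced by two cuts of smaller cut-height followed by $\rightarrow_{I}$, resp.\ $\rightarrow_{C}$; and if it is $\rightarrow_{LR}$, one turns the left premiss $\Gamma\Rightarrow A\rightarrow(B\wedge C)$ into $\Gamma\Rightarrow E\rightarrow A$ and $\Gamma\Rightarrow A\rightarrow F$ by applying $\rightarrow_{R}$ to the auxiliary premisses $E\Rightarrow A$ and $(B\wedge C)\Rightarrow F$ of the $\rightarrow_{LR}$-inference (the latter after an $\rightarrow_{I}$ with the $\rightarrow_{C}$-premisses), and recombines them through $\rightarrow_{I}$. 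The $L_{c}$, $R_{c}$ steps appearing in some of these transformations are licensed by the height-preserving admissibility of contraction, which holds for ${\sf GWFCI}$ exactly as in Theorem~\ref{1c}.

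The point deserving emphasis, rather than presenting any real difficulty, is the last of these — the interaction of a $\rightarrow_{C}$-inference on the left with a $\rightarrow_{LR}$-inference on the right: the recombination there essentially uses the transitivity rule $\rightarrow_{I}$, which is present in ${\sf GWFCI}$ but absent from ${\sf GWFC}$, and this is precisely where an analogous cut-elimination proof for ${\sf GWFC}$ would break down (explaining why the theorem is obtained here but not there). Finally, as in the proof of Theorem~\ref{3}, I would note that cut never reappears as a primitive rule in any transformed derivation — each transformed derivation contains only cuts of strictly smaller measure, or none at all — so the double induction is well-founded and the claim follows.
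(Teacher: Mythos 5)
Your proposal is correct and follows essentially the same route as the paper, which handles this theorem by referring to the proof of Theorem~\ref{cutF}: the only new configurations are those where $\rightarrow_{C}$ concludes a premiss of the cut, and your transformations (permuting the cut upward when $\rightarrow_{C}$ ends the right premiss, and, when it ends the left premiss, splitting on the right premiss's last rule and in particular recombining via $\rightarrow_{R}$ and $\rightarrow_{I}$ in the $\rightarrow_{LR}$ subcase) coincide with the ones displayed there. Your added observation that this last subcase essentially needs $\rightarrow_{I}$, which is why the analogous argument for ${\sf GWFC}$ breaks down, is a correct and worthwhile gloss that the paper does not state explicitly.
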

\begin{proof}
Refer to the proof of Theorem \ref{cutF}.
\end{proof}

\begin{thm} \label{GWFCI}
Derivability in the sequent system ${\sf  GWFCI  } $ and in the Hilbert-style  system $ {\sf WFCI} $ are
equivalent, i.e.
$${\sf  GWFCI }  \vdash \Gamma \Rightarrow \Delta ~~~~\mbox{iff}~~~~\vdash_{\sf WFCI} \bigwedge \Gamma \rightarrow \bigvee \Delta $$
\end{thm}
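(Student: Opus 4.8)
The plan is to reuse verbatim the two-induction strategy of Theorems \ref{3cI} and \ref{3cc}, exploiting the fact that ${\sf WFCI}$ is ${\sf WFI}$ plus the single axiom ${\sf C}$ and ${\sf GWFCI}$ is ${\sf GWFI}$ plus the single rule $\rightarrow_C$. Essentially nothing new has to be proved: the theorem follows by assembling Theorems \ref{3cI} and \ref{3cc}, in the same spirit as Theorem \ref{ffff}.

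\emph{Left-to-right.} I would argue by induction on the height of the derivation in ${\sf GWFCI}$. Every rule already present in ${\sf GWFI}$ is treated exactly as in the proof of Theorem \ref{3cI}, now reading $\vdash_{\sf WFCI}$ for $\vdash_{\sf WFI}$ (harmless, since ${\sf WFI}\subseteq{\sf WFCI}$). The only new last-rule case is $\rightarrow_C$, with conclusion $\Gamma',\Gamma\Rightarrow B\rightarrow C\wedge D,\Delta$ coming from $\Gamma\Rightarrow B\rightarrow C$ and $\Gamma\Rightarrow B\rightarrow D$; it is handled precisely as in the proof of Theorem \ref{3cc}: apply the induction hypothesis to obtain $\vdash_{\sf WFCI}\bigwedge\Gamma\rightarrow(B\rightarrow C)$ and $\vdash_{\sf WFCI}\bigwedge\Gamma\rightarrow(B\rightarrow D)$, combine them by rule 10, then use axiom ${\sf C}$ (available in ${\sf WFCI}$) and rule 9, and finish the $\bigwedge$/$\bigvee$ bookkeeping with axioms 1 and 4 and rule 9.

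\emph{Right-to-left.} I would argue by induction on the height of the axiomatic derivation in ${\sf WFCI}$. Since ${\sf WFCI}$ is axiomatised by the axioms and rules of ${\sf WFI}$ together with axiom ${\sf C}$, the only case not already covered by Theorem \ref{3cI} is axiom ${\sf C}$, which is derived in ${\sf GWFCI}$ by the very derivation displayed in the proof of Theorem \ref{3cc} (the one ending with $\rightarrow_C$ and $\rightarrow_R$, using Lemma \ref{1N}). Exactly as in the proof of Theorem \ref{3}, simulating rule 9 (hence rules 13, MP, etc.) requires the cut rule, which is legitimate here because cut-admissibility for ${\sf GWFCI}$ is Theorem \ref{cutGWFCI}. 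Along the way one needs the ${\sf GWFCI}$-analogues of Lemmas \ref{46} and \ref{47} — ``if ${\sf GWFCI}\vdash_n\,\Rightarrow A\rightarrow B$ then ${\sf GWFCI}\vdash_n A\Rightarrow B$'' and the corresponding $\wedge_R$-inversion — which are proved just like Lemma \ref{51}, by inspecting the last rule and applying cut in the $\rightarrow_I$ and $\rightarrow_C$ cases.

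The only point requiring genuine care is the interaction of the two added features: the left-to-right argument silently relies on height-preserving admissibility of weakening and contraction and on the inversion lemmas, so one must check that adding $\rightarrow_C$ on top of ${\sf GWFI}$ does not disturb these. This, however, is exactly what was verified in establishing Theorem \ref{cutGWFCI} (which imports the structural-rule results of Section \ref{some} and the combined cut analysis of Theorem \ref{cutF}), so I expect no real obstacle; if anything, the most error-prone part is the bookkeeping in the right-to-left direction — keeping track of which earlier derivations and inversion lemmas are being invoked — rather than any new mathematical difficulty.
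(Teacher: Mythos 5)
Your proposal is correct and follows essentially the same route as the paper: the paper's own proof is a one-line appeal to Theorem \ref{ffff} (itself assembled from Theorems \ref{3cI}, \ref{3cc} and \ref{3cdcd}), i.e., exactly the strategy of reusing the $\rightarrow_I$-cases from the ${\sf GWFI}$ equivalence and the $\rightarrow_C$/axiom-${\sf C}$ cases from the ${\sf GWFC}$ equivalence, with cut admissibility (Theorem \ref{cutGWFCI}) licensing the simulation of the Hilbert rules. Your write-up merely makes explicit the bookkeeping (inversion lemmas, structural rules) that the paper leaves implicit.
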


\begin{proof}
By Theorems \ref{ffff}.
\end{proof}

By combining Theorem \ref{4b} and Theorem  \ref{GWFCI} we have the following result.
\begin{cor}
The calculus ${\sf  GWFCI  }$ is sound and complete with respect to the class of  transitive $ N\!B $-neighbourhood frames that are closed under conjunction.
\end{cor}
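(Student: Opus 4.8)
The plan is to run the same two-directional induction that underlies Theorems \ref{3}, \ref{3cI} and \ref{3cc}, treating $\rightarrow_{C}$ as the only genuinely new ingredient of ${\sf GWFCI}$ over ${\sf GWFI}$, and the axiom ${\sf C}$ as the only genuinely new ingredient of ${\sf WFCI}$ over ${\sf WFI}$. Since ${\sf GWFCI} = {\sf GWFI}\oplus\rightarrow_{C}$ and ${\sf WFCI}$ is ${\sf WFI}$ together with the axiom ${\sf C}$, Theorem \ref{3cI} already delivers the part of the equivalence corresponding to ${\sf GWFI}$/${\sf WFI}$; what must be added is the matching of $\rightarrow_{C}$ with the axiom ${\sf C}$ on each side, and this is exactly the content of the new cases in the proof of Theorem \ref{3cc}. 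Throughout we are entitled to use cut inside ${\sf GWFCI}$-derivations, since cut is admissible there by Theorem \ref{cutGWFCI}; we also record, just as for Lemmas \ref{46} and \ref{47}, the inversion fact that ${\sf GWFCI}\vdash_{n}\;\Rightarrow A\rightarrow B$ implies ${\sf GWFCI}\vdash_{n} A\Rightarrow B$, argued by cases on whether the final inference producing $\Rightarrow A\rightarrow B$ was $\rightarrow_{R}$, $\rightarrow_{I}$ or $\rightarrow_{C}$ (using cut in the latter two).

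For the left-to-right direction I would induct on the height of the ${\sf GWFCI}$-derivation of $\Gamma\Rightarrow\Delta$. If the last rule belongs to ${\sf GWFI}$, the inductive step is verbatim that of the corresponding case in Theorem \ref{3cI}. The one new case is when the last rule is $\rightarrow_{C}$, with premises $\Gamma\Rightarrow B\rightarrow C$ and $\Gamma\Rightarrow B\rightarrow D$ and conclusion $\Gamma\Rightarrow B\rightarrow C\wedge D$ (an extra weakening context and extra succedent are absorbed as usual via axioms $1$, $4$ and rule $9$). By the induction hypothesis $\vdash_{\sf WFCI}\bigwedge\Gamma\rightarrow(B\rightarrow C)$ and $\vdash_{\sf WFCI}\bigwedge\Gamma\rightarrow(B\rightarrow D)$; applying rule $10$, then the instance $(B\rightarrow C)\wedge(B\rightarrow D)\rightarrow(B\rightarrow C\wedge D)$ of axiom ${\sf C}$, then rule $9$, yields $\vdash_{\sf WFCI}\bigwedge\Gamma\rightarrow(B\rightarrow C\wedge D)$, exactly as in items $1$--$6$ of the left-to-right part of Theorem \ref{3cc}.

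For the right-to-left direction I would induct on the height of the Hilbert derivation in ${\sf WFCI}$. Every axiom and rule of ${\sf WFI}$ is handled as in Theorem \ref{3cI} (the transitivity axiom being obtained from $\rightarrow_{I}$, $\wedge_{L}$, Lemma \ref{1N} and $\rightarrow_{R}$), and the simulation of the restricted Hilbert rules — in particular rule $9$ — is carried out with cut, legitimate by Theorem \ref{cutGWFCI}; here the inversion fact above is used to pass from derivations of $\Rightarrow A\rightarrow B$ to derivations of $A\Rightarrow B$ when unpacking biconditionals. The only new base case is the axiom ${\sf C}$, $(A\rightarrow B)\wedge(A\rightarrow C)\rightarrow(A\rightarrow B\wedge C)$, which is derived in ${\sf GWFCI}$ by precisely the derivation displayed at the end of the proof of Theorem \ref{3cc}: two applications of $\wedge_{L}$ to instances of Lemma \ref{1N}, followed by $\rightarrow_{C}$ and then $\rightarrow_{R}$.

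The main obstacle is not in this theorem, whose two directions are an essentially mechanical merge of the proofs of Theorems \ref{3cI} and \ref{3cc}, but in its prerequisite Theorem \ref{cutGWFCI}: one must verify that adding $\rightarrow_{C}$ alongside $\rightarrow_{I}$ does not break cut admissibility, i.e. that every new principal/principal and principal/side cut whose premise is inferred by $\rightarrow_{C}$ can be permuted upward or replaced by cuts of smaller weight or smaller cut-height (this is where the cases involving $\rightarrow_{I}$ above $\rightarrow_{C}$ and vice versa must be checked, as in the proof of Theorem \ref{cutF}). Once that is in hand, the equivalence — and hence, via Theorem \ref{4b}, the soundness and completeness corollary for transitive $N\!B$-neighbourhood frames closed under intersection — follows as sketched.
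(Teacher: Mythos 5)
Your proposal is correct and follows essentially the same route as the paper: the corollary is obtained by combining the soundness/completeness of ${\sf WFCI}$ for the relevant frame class (Theorem \ref{4b}) with the equivalence ${\sf GWFCI}\vdash\Gamma\Rightarrow\Delta$ iff $\vdash_{\sf WFCI}\bigwedge\Gamma\rightarrow\bigvee\Delta$, which the paper establishes by appeal to Theorems \ref{3cI}, \ref{3cc} and \ref{cutGWFCI} exactly as you describe. You merely spell out the merge of the $\rightarrow_{I}$ and $\rightarrow_{C}$ cases that the paper leaves implicit in its one-line citation of Theorem \ref{ffff}.
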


\subsection{The system {\sf GWFDI}}
The system {\sf GWFDI} is defined as ${\sf GWFI}\oplus  \rightarrow_{D}$.

\begin{thm}\label{cutGWFDI}
The rule of cut  is addmissible in ${\sf  GWFDI } $.
$$ \frac{\Gamma\Rightarrow  D~~~~~~D, \Gamma^{'}\Rightarrow \Delta}{\Gamma,\Gamma^{'} \Rightarrow \Delta}Cut$$
\end{thm}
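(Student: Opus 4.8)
The plan is to mirror the cut-elimination arguments already carried out for ${\sf GWFI}$ (Theorem~\ref{cutI}) and for ${\sf GF}$ (Theorem~\ref{cutF}), exploiting the fact that ${\sf GWFDI}$ is obtained from ${\sf GWFI}$ by adding only the single rule $\rightarrow_{D}$. As in Theorem~\ref{1d}, the proof proceeds by induction on the weight of the cut formula $D$ with a subinduction on the cut-height. Every instance of $Cut$ in which $\rightarrow_{D}$ is not the last rule applied in either premiss is already handled verbatim by the proof of Theorem~\ref{cutI}; hence the only genuinely new work is to treat the instances in which $\rightarrow_{D}$ is the last rule in at least one of the two premisses. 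These are exactly the subcases appearing in the proof of Theorem~\ref{cutF} that mention $\rightarrow_{D}$, and I would reproduce precisely those transformations, noting that the rule $\rightarrow_{I}$ used there to reassemble conclusions is available in ${\sf GWFDI}$.

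Concretely, I would organise the new cases as follows. If $\rightarrow_{D}$ is the last rule of the right premiss, its principal formula lies in the succedent, so the cut formula $D$ occurs only in the context of that premiss; one then permutes the cut upward into the two premisses of $\rightarrow_{D}$, obtaining two cuts on the same formula $D$ of strictly smaller cut-height, and reapplies $\rightarrow_{D}$. Symmetrically one permutes upward on the right when the left premiss ends in $\rightarrow_{D}$ but the cut formula is not principal in the right premiss. For the mixed cases where the left premiss ends in one of $\rightarrow_{R}$, $\rightarrow_{I}$, $\rightarrow_{LR}$, $\rightarrow_{D}$ while the right premiss ends in $\rightarrow_{D}$ (and the symmetric situations), one copies the corresponding picture from the proof of Theorem~\ref{cutF}, which replaces the single cut either by cuts whose cut formulas are proper subformulas of $D$ (hence of strictly smaller weight) or by cuts of strictly smaller cut-height.

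The delicate case is the one in which the cut formula is an implication that is principal in \emph{both} premisses. Since in ${\sf GWFDI}$ an implication can become principal on the left of a sequent only through $\rightarrow_{LR}$, this forces the left premiss to end in $\rightarrow_{D}$ with conclusion $\Gamma\Rightarrow (A\vee C)\rightarrow B$ and the right premiss to end in $\rightarrow_{LR}$ with the formula $(A\vee C)\rightarrow B$ principal on the left. Here I would use the transformation from Theorem~\ref{cutF}: from the premisses $E\Rightarrow A\vee C$ and $B\Rightarrow F$ of the $\rightarrow_{LR}$ instance, apply $\rightarrow_{R}$, then $\rightarrow_{I}$ against the unchanged subderivation of $\Gamma\Rightarrow (A\vee C)\rightarrow B$ to obtain $\Gamma\Rightarrow E\rightarrow B$, and finally $\rightarrow_{I}$ again with $\Gamma\Rightarrow B\rightarrow F$, which eliminates the cut outright without introducing any new cut, so termination is immediate in this case.

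The main obstacle I anticipate is purely bookkeeping: verifying in each of the handful of subcases that the replacement derivations use only rules of ${\sf GWFDI}$ and that the induction measure (weight of cut formula, then cut-height) strictly decreases — in particular checking that $\rightarrow_{I}$, rather than a conjunction-introducing implication rule which ${\sf GWFDI}$ does not possess, always suffices to reconstruct the conclusion. No idea beyond those already used in Theorems~\ref{cutI} and~\ref{cutF} should be needed.
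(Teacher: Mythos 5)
Your proposal is correct and follows essentially the same route as the paper, which proves this theorem simply by referring back to the proof of Theorem~\ref{cutF}: the cases not involving $\rightarrow_{D}$ are inherited from Theorem~\ref{cutI}, and the $\rightarrow_{D}$-cases are exactly the transformations given there (all of which use only $\rightarrow_{R}$, $\rightarrow_{I}$, $\rightarrow_{D}$ and smaller cuts, hence are available in ${\sf GWFDI}$). Your additional check that $\rightarrow_{I}$ suffices to reassemble the conclusions is a useful explicit verification, but it does not change the argument.
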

\begin{proof}
Refer to the proof of Theorem \ref{cutF}.
\end{proof}
\begin{thm} \label{GWFDI }
Derivability in the sequent system ${\sf  GWFDI  } $ and in the Hilbert-style  system $ {\sf WFDI} $ are
equivalent, i.e.
$${\sf  GWFDI }  \vdash \Gamma \Rightarrow \Delta ~~~~\mbox{iff}~~~~\vdash_{\sf WFDI} \bigwedge \Gamma \rightarrow \bigvee \Delta $$
\end{thm}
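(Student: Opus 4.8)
The plan is to reuse, almost verbatim, the arguments of Theorems~\ref{3cI} and \ref{3cdcd}. Since ${\sf GWFDI}={\sf GWFI}\oplus\rightarrow_{D}$ and, on the Hilbert side, ${\sf WFDI}$ is the smallest set of formulas closed under ${\sf WF}$, ${\sf I}$ and ${\sf D}$, the only rule of ${\sf GWFDI}$ not already treated in the proof of Theorem~\ref{3cI} is $\rightarrow_{D}$, and the only axiom of ${\sf WFDI}$ not already treated there is ${\sf D}$; everything else carries over unchanged. As usual I prove the left-to-right implication by induction on the height of the ${\sf GWFDI}$-derivation of $\Gamma\Rightarrow\Delta$ and the right-to-left implication by induction on the height of the Hilbert-style derivation in ${\sf WFDI}$, using that cut is admissible in ${\sf GWFDI}$ (Theorem~\ref{cutGWFDI}), which is needed to simulate rule~9.

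For the left-to-right direction, the base cases and every inductive case whose last rule belongs to ${\sf GWFI}$ (in particular $\rightarrow_{R}$, $\rightarrow_{LR}$ and $\rightarrow_{I}$) are handled exactly as in Theorems~\ref{3} and \ref{3cI}. In the one new case the last rule is $\rightarrow_{D}$, deriving $\Gamma\Rightarrow B\vee D\rightarrow C$ from $\Gamma\Rightarrow B\rightarrow C$ and $\Gamma\Rightarrow D\rightarrow C$; by the induction hypothesis $\vdash_{\sf WFDI}\bigwedge\Gamma\rightarrow(B\rightarrow C)$ and $\vdash_{\sf WFDI}\bigwedge\Gamma\rightarrow(D\rightarrow C)$, so rule~10 gives $\vdash_{\sf WFDI}\bigwedge\Gamma\rightarrow((B\rightarrow C)\wedge(D\rightarrow C))$, and then axiom~${\sf D}$ together with rule~9 yields $\vdash_{\sf WFDI}\bigwedge\Gamma\rightarrow(B\vee D\rightarrow C)$, any remaining side formula in the succedent being absorbed by axiom~1 and rule~9 exactly as in Theorem~\ref{3cdcd}.

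For the right-to-left direction I argue by induction on the ${\sf WFDI}$-derivation. All axioms and rules inherited from ${\sf WFI}$ are simulated by the very derivations exhibited in the proof of Theorem~\ref{3cI} (these remain valid in the extension ${\sf GWFDI}$), and rule~9 is simulated via cut, available by Theorem~\ref{cutGWFDI}. It thus remains only to derive axiom~${\sf D}$, namely $(A\rightarrow C)\wedge(B\rightarrow C)\rightarrow(A\vee B\rightarrow C)$, in ${\sf GWFDI}$: apply $\rightarrow_{D}$ to the sequents $A\rightarrow C,B\rightarrow C\Rightarrow A\rightarrow C$ and $A\rightarrow C,B\rightarrow C\Rightarrow B\rightarrow C$ (both instances of Lemma~\ref{1N}), then $\wedge_{L}$ and $\rightarrow_{R}$, mirroring the derivation of axiom~${\sf C}$ in Theorem~\ref{3cc}. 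The only subtle point is that simulating rule~13 needs the ${\sf GWFDI}$-analogue of Lemmas~\ref{46} and \ref{48} — that $\vdash_{n}\ \Rightarrow A\rightarrow B$ implies $\vdash_{n}A\Rightarrow B$ — which is obtained by the same case split on the last rule ($\rightarrow_{R}$, $\rightarrow_{I}$ or $\rightarrow_{D}$, the last two dispatched by a cut as in those lemmas). I therefore do not expect any genuine obstacle: the proof is a routine merge of the already-established arguments for ${\sf GWFI}$ and ${\sf GWFD}$.
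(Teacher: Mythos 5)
Your proposal is correct and follows essentially the same route as the paper, which disposes of this theorem by appeal to Theorem~\ref{ffff} (itself reduced to Theorems~\ref{3cI}, \ref{3cc} and \ref{3cdcd}), i.e.\ by merging the already-established treatments of the $\rightarrow_{I}$ and $\rightarrow_{D}$ rules on the sequent side with the derivations of axioms {\sf I} and {\sf D} on the Hilbert side. You simply spell out explicitly the details that the paper leaves implicit, including the needed inversion lemma combining Lemmas~\ref{46} and \ref{48} and the use of cut admissibility (Theorem~\ref{cutGWFDI}) to simulate rule~9.
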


\begin{proof}
By Theorems \ref{ffff}.
\end{proof}

By combining Theorem \ref{4b} and Theorem  \ref{GWFDI } we have the following result.
\begin{cor}
The calculus ${\sf  GWFDI  }$ is sound and complete with respect to the class of  transitive $ N\!B $-neighbourhood frames that are closed under union.
\end{cor}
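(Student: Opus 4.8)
The plan is to obtain this corollary by composing two results already in hand: Theorem~\ref{4b}, instantiated at $\Gamma = \{{\sf I},{\sf D}\}$, and the equivalence between ${\sf GWFDI}$ and ${\sf WFDI}$ established in Theorem~\ref{GWFDI }.

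First I would record what Theorem~\ref{4b} gives in this instance. Since ${\sf WFDI}$ is the logic ${\sf WF\Gamma}$ for $\Gamma=\{{\sf I},{\sf D}\}$, and since by the correspondence results of the earlier proposition Axiom~${\sf I}$ corresponds to transitivity and Axiom~${\sf D}$ to closure under union, the class of frames associated with $\{{\sf I},{\sf D}\}$ is exactly the class of transitive $N\!B$-neighbourhood frames that are closed under union. Hence ${\sf WFDI}$ is sound and strongly complete with respect to this class; in particular $\vdash_{\sf WFDI} C$ holds iff $C$ is valid in every such frame.

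Next I would invoke Theorem~\ref{GWFDI }, which states that ${\sf GWFDI}\vdash\Gamma\Rightarrow\Delta$ iff $\vdash_{\sf WFDI}\bigwedge\Gamma\rightarrow\bigvee\Delta$. Chaining this with the previous paragraph yields: ${\sf GWFDI}\vdash\Gamma\Rightarrow\Delta$ iff $\bigwedge\Gamma\rightarrow\bigvee\Delta$ is valid in every transitive $N\!B$-neighbourhood frame closed under union. Because a sequent $\Gamma\Rightarrow\Delta$ is interpreted precisely as $\bigwedge\Gamma\rightarrow\bigvee\Delta$, the forward implication of this biconditional is soundness of ${\sf GWFDI}$ over the stated class of frames and the backward implication is completeness, which is exactly the assertion of the corollary.

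Thus the proof is essentially a substitution into the two cited theorems, and I do not expect a genuine obstacle. The only point deserving a remark is the boundary bookkeeping: one must fix the conventions for $\bigwedge\Gamma$ and $\bigvee\Delta$ when $\Gamma$ or $\Delta$ is empty (an empty succedent making $\Gamma\Rightarrow\Delta$ express that $\bigwedge\Gamma$ is rejected at every world, an empty antecedent reducing the sequent to theoremhood of $\bigvee\Delta$), and one should note that the passage between the multiset presentation of sequents and single-formula implications is already absorbed into Theorem~\ref{GWFDI } together with the Weak Deduction Theorem (Theorem~\ref{z3}), so no fresh semantic argument is required. With those conventions pinned down, the chain of equivalences above goes through verbatim.
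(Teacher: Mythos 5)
Your proposal is correct and follows exactly the paper's own route: the corollary is obtained by combining Theorem~\ref{4b} (instantiated with the axioms ${\sf I}$ and ${\sf D}$, whose frame correspondences are transitivity and closure under union) with the equivalence of ${\sf GWFDI}$ and ${\sf WFDI}$ from Theorem~\ref{GWFDI }. The extra remarks on empty-context conventions are harmless bookkeeping and do not change the argument.
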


\section{Conclusion}
 
In this article, we presented the first analytic proof systems for a number of weak subintuitionistic logics.
We have demonstrated that the most significant systems of these calculi have good structural properties in that weakening and
contraction are height-preserving admissible and cut is syntactically admissible.
Given these findings, it seems worthwhile to further investigate interpolation for these subintuitionistic logics.
Another possible direction for a future work, particularly for the systems where we have been unable to prove cut admissibility, would be to explore labelled sequent calculi for subintuitionistic logics, which may provide a solution to the cut admissibility problem.

\vspace{1cm}

\noindent\textbf{Acknowledgements.} The author thanks Dick de Jongh, Marianna Girlando and Raheleh Jalali for all their helpful suggestions concerning subintuitionistic logics and sequent calculus. 

\label{references}
\

\end{document}